\documentclass[a4paper,11pt]{article}

\usepackage[intlimits]{amsmath}
\usepackage{amssymb}
\usepackage{amsthm}
\usepackage{cite}
\usepackage{array}
\usepackage[T1]{fontenc}
\usepackage{lmodern}
\usepackage[a4paper,margin=30mm]{geometry}
\usepackage{microtype}
\usepackage{booktabs}
\usepackage[hidelinks,bookmarksnumbered]{hyperref}
\hypersetup{pdftitle={Uniqueness for the inverse Schrodinger problem in the plane with Lp potentials, p>1},
  pdfauthor={Catalin I. Carstea, Jenn-Nan Wang}}
\allowdisplaybreaks[1]

\newcommand{\pr}{\partial}

\newcommand{\Phiz}{\Phi_{z_0}}
\newcommand{\bPhiz}{\bar\Phi_{z_0}}
\newcommand{\psiz}{\psi_{z_0}}

\newcommand{\R}{\mathbb{R}}
\newcommand{\C}{\mathbb{C}}

\newcommand{\dd}{\,\text{d}}

\makeatletter
\newcommand{\aliaslabel}[1]{\ltx@label{#1}}
\makeatother

\theoremstyle{plain}
\newtheorem{thm}{Theorem}[section]
\newtheorem{prop}[thm]{Proposition}
\newtheorem{lem}[thm]{Lemma}
\newtheorem{cor}[thm]{Corollary}

\theoremstyle{definition}
\newtheorem{dfn}[thm]{Definition}

\theoremstyle{remark}
\newtheorem{rem}[thm]{Remark}

\title{Uniqueness for the inverse Schr\"odinger problem in the plane with $L^p$ potentials, $p>1$}
\author{C\u{a}t\u{a}lin I. C\^arstea\thanks{Department of Applied Mathematics, National Yang Ming Chiao Tung University, Hsinchu 30050, Taiwan. Email: \texttt{catalin.carstea@gmail.com}.}
\and Jenn-Nan Wang\thanks{Institute of Applied Mathematical Sciences, National Taiwan University, Taipei 106, Taiwan. Email: \texttt{jnwang@math.ntu.edu.tw}.}
}
\date{}

\begin{document}
\maketitle

\begin{abstract}
Let $\Omega\subset\mathbb R^2$ be a bounded smooth domain. We prove that the weak Dirichlet-to-Neumann map for $-\Delta+V$ uniquely determines every complex-valued potential $V\in L^p(\Omega)$, $p>1$, provided that zero is not a Dirichlet eigenvalue. This extends the previously known range $p>4/3$ to all $p>1$. The proof uses Bukhgeim's quadratic-phase solutions and an average of Alessandrini's identity over the phase center. After separating the Neumann-series tails, we show that each remaining Born term tends to zero. The fixed-order estimates combine bounds for the absolute kernels with oscillatory cancellation and a duality argument for the product of the two Cauchy transforms.
\end{abstract}

\medskip
\noindent\textbf{Keywords:} Calder\'on problem, inverse boundary value problem, Schr\"odinger equation, unbounded potentials, complex geometrical optics, Bukhgeim solutions.

\smallskip
\noindent\textbf{2020 Mathematics Subject Classification:} 35R30 (primary); 35J10, 35J25 (secondary).

\section{Introduction}

\subsection{Setting and main result}

Let $\Omega\subset\R^2\equiv\C$ be a bounded smooth domain, and consider
\begin{equation}\label{eq}
\left\{\begin{array}{l}
-\triangle u+Vu=0,\quad \text{in }\Omega,\\[5pt]
 u|_{\partial\Omega}=f.
\end{array}\right.
\end{equation}
We ask whether the weak Dirichlet-to-Neumann map for \eqref{eq} determines the potential in the range
\begin{equation}\label{Lp-space}
        V\in L^p(\Omega),\qquad p>1.
\end{equation}
Our main result gives an affirmative answer, also for complex-valued potentials.

We use the following weak formulation.  Let $V\in L^p(\Omega)$, $p>1$, and write $p'=p/(p-1)$.  Since $H^1(\Omega)\hookrightarrow L^{2p'}(\Omega)$ in two dimensions, the product $w\widetilde w$ belongs to $L^{p'}(\Omega)$ for $w,\widetilde w\in H^1(\Omega)$.  Hence the bilinear form
\begin{equation}\label{weak-form-def}
        a_V(w,\widetilde w)=\int_\Omega \nabla w\cdot\nabla \widetilde w\,\dd x+
        \int_\Omega Vw\widetilde w\,\dd x
\end{equation}
is bounded on $H^1(\Omega)\times H^1(\Omega)$.  Our convention is complex bilinear: no conjugates occur either in this form or in the boundary pairings below.

Assume now that $0$ is not a Dirichlet eigenvalue of $-\Delta+V$.  For each $f\in H^{1/2}(\partial\Omega)$ there is a unique weak solution $u_f\in H^1(\Omega)$ with trace $f$.  Indeed, multiplication by $V$ is compact from $H^1_0(\Omega)$ to $H^{-1}(\Omega)$, because $H^1_0(\Omega)$ embeds compactly into $L^{2p'}(\Omega)$; hence $-\Delta+V:H^1_0(\Omega)\to H^{-1}(\Omega)$ is Fredholm of index zero, and injectivity gives solvability after subtracting an $H^1$ extension of $f$.  The weak Dirichlet-to-Neumann map is defined by the boundary pairing
\begin{equation}\label{DN-space}
        \Lambda_V:H^{1/2}(\partial\Omega)\longrightarrow H^{-1/2}(\partial\Omega),
        \qquad
        \langle \Lambda_V f,g\rangle=a_V(u_f,w_g),
\end{equation}
where $w_g\in H^1(\Omega)$ is any extension of $g\in H^{1/2}(\partial\Omega)$.  This is independent of the extension because $a_V(u_f,\cdot)$ vanishes on $H^1_0(\Omega)$.  The bilinear symmetry of $a_V$ also gives
\begin{equation*}
        \langle \Lambda_V f,g\rangle=\langle \Lambda_V g,f\rangle,
        \qquad f,g\in H^{1/2}(\partial\Omega).
\end{equation*}

We can now state the main theorem.
\begin{thm}\label{main-thm}
Let $1<p<\infty$, let $V,\widetilde V\in L^p(\Omega)$ be possibly complex-valued, and assume that $0$ is not a Dirichlet eigenvalue for either $-\Delta+V$ or $-\Delta+\widetilde V$ in $\Omega$.  If
\begin{equation*}
        \Lambda_V=\Lambda_{\widetilde V},
\end{equation*}
then $V=\widetilde V$ almost everywhere in $\Omega$.
\end{thm}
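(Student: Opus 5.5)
The plan is to follow Bukhgeim's method, in the form the abstract indicates: quadratic-phase complex geometrical optics solutions, and Alessandrini's identity averaged over the phase center.

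\textbf{Alessandrini's identity.} From $\Lambda_V=\Lambda_{\widetilde V}$, the weak formulation \eqref{DN-space} and bilinear symmetry give
\begin{equation*}
\int_\Omega (V-\widetilde V)\,u\,\widetilde u\dd x=0
\end{equation*}
for all weak solutions $u\in H^1(\Omega)$ of $(-\Delta+V)u=0$ and $\widetilde u\in H^1(\Omega)$ of $(-\Delta+\widetilde V)\widetilde u=0$. To justify this, extend $u|_{\partial\Omega}$ to the solution of the $\widetilde V$ problem and subtract.

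\textbf{CGO solutions.} For $z_0\in\Omega$ set $\Phiz(z)=(z-z_0)^2$ and take $h\to0$. We seek
\begin{equation*}
u=e^{\Phiz/h}(1+r_h),\qquad \widetilde u=e^{-\bPhiz/h}(1+\widetilde r_h).
\end{equation*}
Writing $-\Delta=-4\pr\bar\pr$, we have $r_h=\sum_{k\ge1}(T_hV)^k1$, where the operator is built from the Cauchy transforms $\pr^{-1}$ and $\bar\pr^{-1}$, conjugated by the oscillating factor $e^{(\bPhiz-\Phiz)/h}=e^{-2i\,\mathrm{Im}\Phiz/h}$. For $V\in L^p$ with $p>1$, $\bar\pr^{-1}$ maps $L^p\to L^{q}$ for every $q<2p/(2-p)$; when $p\ge2$ it maps into $L^\infty$. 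Combined with a stationary-phase or van der Corput decay for the oscillatory Cauchy operator, this should give $\|T_hV\|\to0$ on a suitable $L^s$ space, so the Neumann series converges for small $h$.

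The product is $u\widetilde u=e^{2i\,\mathrm{Im}\Phiz/h}(1+r_h+\widetilde r_h+r_h\widetilde r_h)$. Multiply the identity by $1/h$ and integrate the phase center $z_0$ against a test function. The leading term is then, by stationary phase at $z=z_0$, a convolution of $q=V-\widetilde V$ with $h^{-1}e^{2i\,\mathrm{Im}(z-z_0)^2/h}$. This tends to $c\,q$ in the distributional sense (in $L^p$ after averaging) as $h\to0$, for a nonzero constant $c$. Averaging over $z_0$ is what makes the leading term converge for merely $L^p$ potentials; pointwise-in-$z_0$ stationary phase needs too much regularity.

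\textbf{The remainders.} It remains to show that $\frac1h\int\!\!\int q\,e^{2i\,\mathrm{Im}\Phiz/h}(r_h+\widetilde r_h+r_h\widetilde r_h)\,\dd x\,\varphi(z_0)\dd z_0\to0$. I split the Neumann series at some order $N$.
\begin{itemize}
\item The tails $\sum_{k>N}$ are controlled by the operator-norm smallness $\|T_h\|\lesssim h^{\epsilon}$. Choosing $N$ with $N\epsilon>1$ beats the $1/h$ factor, using $L^p$–$L^{p'}$ pairings and Hölder.
\item Each fixed-order Born term is an explicit multilinear oscillatory integral in $q,V,\widetilde V$ with Cauchy kernels $1/(z-w)$. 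I prove it is $o(1)$ by density. First, bounds on the absolute kernels, uniform in $h$ after the $z_0$-averaging, make the term continuous in the potentials in $L^p$. Then, for smooth compactly supported approximants, integration by parts in the phase (oscillatory cancellation) gives decay. Uniform bound plus decay on a dense class gives convergence to zero.
\end{itemize}

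\textbf{Main obstacle.} I expect the hardest part to be the uniform bound for the mixed terms containing both $\pr^{-1}$ and $\bar\pr^{-1}$, such as the first-order cross term $r_h^{(1)}\widetilde r_h^{(1)}$, when $p$ is close to $1$. Hölder alone fails there: this is the $p>4/3$ barrier. My first attempt is duality. I would move one Cauchy transform onto the other factor, rewriting the pairing as $\langle \bar\pr^{-1}(qe^{\cdot}),\,\pr^{-1}(\cdots)\rangle$, and exploit the $z_0$-average. Integrating in $z_0$ first produces a kernel in the variables $(w,w')$ of size roughly $h\,|w-w'|^{-1}$ or better. This should give an $L^p\times L^p$ bound via Hardy–Littlewood–Sobolev, valid for all $p>1$.

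Once every term is shown to vanish, the identity yields $\int q\varphi=0$ for every test function $\varphi$, hence $V=\widetilde V$ almost everywhere.
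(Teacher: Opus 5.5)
There is a genuine gap at the step you treat as routine: that every fixed-order Born term is bounded uniformly in $\tau$ after the $z_0$-average and is $o(1)$ for smooth approximants. You give no mechanism for this. Whether it is even true depends on a choice your construction leaves open: which inverse is applied last, and whether the antiderivative is normalized.

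In the Bukhgeim ordering used in the paper, $\zeta=e^{-i\tau\psi_{z_0}}\partial^{-1}\bigl(e^{i\tau\psi_{z_0}}\chi\,\bar\partial^{-1}(q(1+\zeta))\bigr)$ with $A=\bar\partial^{-1}q$ unnormalized, the first correction carries the phase at a single point. Its averaged contribution is exactly $\int Q\,\partial^{-1}(\chi A\,E_\tau\varphi)\,\dd^2x$, which converges to $\int Q\,\partial^{-1}(\varphi A)\,\dd^2x$ rather than to zero, for general $Q$ and $A$, smooth or not. So ``integration by parts gives decay'' fails for this term, which is why the paper replaces $A$ by $A-A(z_0)$. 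After that normalization, however, the averaged terms contain $E_\tau(\varphi A\widetilde A)$, with $\varphi A\widetilde A$ only in $L^{r/2}$ and $r/2<2$ when $p<4/3$. Since $E_\tau$ is a unimodular Schr\"odinger-type multiplier, it is not bounded on $L^{r/2}$, so absolute-kernel bounds cannot give the uniform control your ``continuity in $L^p$ plus density'' step needs. The paper instead transposes $E_\tau$ onto the oscillatory center kernel $\mathcal G_{\tau,j,k}$. This rebuilds both branches with terminal input $1$, and each terminal Cauchy transform then gains $\tau^{-1/2}$ (Lemma~\ref{lem-direct-rough-center-transpose}, Proposition~\ref{prop-direct-product-error-duality}). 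Your duality sketch, a $z_0$-integrated kernel ``of size $h|w-w'|^{-1}$'', is not this mechanism and is not justified. Separately, the leading term converges only distributionally, through $E_\tau\varphi\to\varphi$ uniformly; $E_\tau Q\to Q$ in $L^p$ is false for $p\neq2$.

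Your phrase ``$-\Delta=-4\partial\bar\partial$'' suggests the other ordering, $\zeta=\bar\partial^{-1}\bigl(e^{-i\tau\psi_{z_0}}\chi\,\partial^{-1}(e^{i\tau\psi_{z_0}}q(1+\zeta))\bigr)$, with the mirror construction for $\widetilde\zeta$. This is an equally valid CGO: its iteration operator is $g\mapsto\overline{S^{\bar q}_{z_0,\tau}\bar g}$, so Lemma~\ref{lem-bukhgeim-interpolation} applies.

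If you commit to it, your outline can be completed without any normalization or product duality. The root phase $e^{i\tau\psi_{z_0}(x)}$ from $u\widetilde u$ is no longer cancelled, and each block adds $-\psi_{z_0}(y)+\psi_{z_0}(w)$. Hence every Born term, one-sided or mixed, has at least three phase points with signs summing to $1$, and an amplitude independent of $z_0$. By Lemma~\ref{lem-signed-quadratic-normal-form}, the center integral is exactly $e^{i\tau P}E_\tau\varphi(c)$, with $c$ the signed center and $P$ nondegenerate on fibers of positive dimension. Two facts then finish the argument. First, $|E_\tau\varphi|\le C$ uniformly in $\tau$. Second, the absolute kernel mass is at most $C\|Q\|_{L^1}\|K_R*|q|\|_\infty^j\|K_R*|\widetilde q|\|_\infty^k$, using $K_R\in L^{p'}$. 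Uniform convergence $E_\tau\varphi\to\varphi$ and Lemma~\ref{lem-quadratic-RL} then give $o(1)$ directly. No density argument is needed, and your ``main obstacle'' for mixed terms does not arise.

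As written, though, the proposal fixes neither the ordering nor states this exact center-average identity, and those two things are what make the Born terms vanish.
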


\subsection{Background and related work}

The inverse boundary value problem for the Schr\"odinger equation is closely related to Calder\'on's problem of determining an electrical conductivity from boundary voltage and current measurements \cite{Calderon1980}. In dimensions $n\geq3$, Sylvester and Uhlmann proved global uniqueness using complex geometrical optics (CGO) solutions with linear phases \cite{SylvesterUhlmann1987}. For unbounded potentials, Dos Santos Ferreira, Kenig, and Salo obtained uniqueness in the scale-invariant class $L^{n/2}$ on admissible geometries, including smooth bounded Euclidean domains \cite{DosSantosFerreiraKenigSalo2013}. This exponent also occurs in the unique-continuation theorem of Jerison and Kenig \cite{JerisonKenig1985}. In the plane, Amrein, Berthier, and Georgescu established unique continuation for potentials in $L^p_{\mathrm{loc}}$, $p>1$, in the corresponding local Sobolev solution classes \cite{AmreinBerthierGeorgescu1981}.

Early two-dimensional inverse results included local uniqueness under perturbative assumptions, due to Sylvester--Uhlmann and Sun \cite{SylvesterUhlmann1986,Sun1989,Sun1990}, and generic uniqueness, due to Sun and Uhlmann \cite{SunUhlmann1991}. Another line of work recovered singularities of the potential without determining the entire potential. Sun and Uhlmann studied this question for planar inverse scattering and related formally determined problems \cite{SunUhlmann1993Scattering,SunUhlmann1993Singularities}. Serov and P\"aiv\"arinta subsequently obtained estimates for the Green--Faddeev function and used the fixed-energy Born approximation to recover singularities \cite{SerovPaivarinta2005}. These results are relevant to low-regularity uniqueness because they separate a singular part determined by the data from a more regular remainder.

A parallel low-regularity theory has developed for the planar conductivity problem. Nachman established global uniqueness and reconstruction for uniformly positive conductivities in $W^{2,p}$, $p>1$, by a $\bar\partial$-scattering method \cite{Nachman1996}. Brown and Uhlmann proved uniqueness for uniformly positive conductivities in $W^{1,p}$, $p>2$ \cite{BrownUhlmann1997}, and Astala and P\"aiv\"arinta treated bounded measurable conductivities bounded away from zero \cite{AstalaPaivarinta2006}. For conductivities in $W^{1,2}$ that are bounded away from zero but may be unbounded above, C\^arstea and Wang proved uniqueness on simply connected domains under a smallness condition on $\|\nabla\log\gamma\|_{L^2}$ \cite{CarsteaWang2018}. Nachman, Regev, and Tataru obtained global uniqueness for positive conductivities with $\log\gamma\in\dot H^1$, using a nonlinear Plancherel theorem \cite{NachmanRegevTataru2020}. The conductivity structure is important in these results; it is not assumed for the complex potentials considered here.

There is also an earlier uniqueness theorem for real Schr\"odinger potentials throughout the range $p>1$ under a spectral positivity assumption. Isakov and Nachman proved that, if $V,\widetilde V\in L^p(\Omega)$ are real-valued, the lowest Dirichlet eigenvalue of $-\Delta+V$ is strictly positive, and zero is not a Dirichlet eigenvalue of $-\Delta+\widetilde V$, then equality of the Dirichlet-to-Neumann maps implies $V=\widetilde V$ \cite[Theorem~1.3]{IsakovNachman1995}. Their proof also gives a reconstruction procedure under suitable boundary regularity. The positivity assumption is stronger than the exclusion of zero from the Dirichlet spectrum used in Theorem~\ref{main-thm}.

For general planar potentials, Bukhgeim introduced CGO solutions with quadratic complex phases and obtained global uniqueness for smooth potentials \cite{Bukhgeim2008}. The stationary point of the phase allows the leading term in the boundary integral identity to recover the potential at an interior point. Bl\r{a}sten's account \cite{Blasten2011} discusses the regularity issues in this argument. Imanuvilov and Yamamoto proved uniqueness for $L^p$ potentials with $p>2$ \cite{ImanuvilovYamamotoLinear2012}; Bl\r{a}sten, Imanuvilov, and Yamamoto also treated this class and established conditional logarithmic stability estimates under additional Sobolev assumptions \cite{BlastenImanuvilovYamamoto2015}. Related developments for partial boundary data include the work of Imanuvilov, Uhlmann, and Yamamoto and the subsequent regularity refinement of Imanuvilov and Yamamoto \cite{ImanuvilovUhlmannYamamoto2010,ImanuvilovYamamoto2012}.

Pointwise reconstruction by quadratic-phase solutions has also been studied separately from uniqueness. Astala, Faraco, and Rogers obtained almost-everywhere reconstruction for compactly supported $H^{1/2}$ potentials and exhibited failures of the unaveraged reconstruction procedure below this regularity \cite{AstalaFaracoRogers2016}. Tejero introduced averaged reconstruction formulas for compactly supported complex potentials in $H^s(\mathbb R^2)$ for every $s>0$ \cite{Tejero2019}. In particular, averaging the reconstruction formula in the phase center permits recovery at lower Sobolev regularity. These results concern convergence of reconstruction formulas under positive Sobolev regularity; the present proof uses a fixed smooth test function and distributional convergence for potentials assumed only to belong to $L^p$.

The closest predecessor to Theorem~\ref{main-thm} is the global uniqueness theorem of Bl\r{a}sten, Tzou, and Wang for $L^p$ potentials with $p>4/3$ \cite{BlastenTzouWang2020}. Their construction of quadratic-phase CGO solutions already applies for every $p>1$. The additional restriction in their uniqueness argument enters in estimating the terms obtained after substituting these solutions into the integral identity. They use refined estimates at different Born orders together with an improvement in the integrability of the potential difference under equality of the boundary maps, obtained from the singularity-recovery result of Serov and P\"aiv\"arinta \cite{SerovPaivarinta2005}. Thus the construction of the solutions and the estimates for the integral identity impose different regularity requirements.

A different $\bar\partial$-scattering approach was developed by Lakshtanov and Vainberg for compactly supported $L^p$ potentials, $p>1$ \cite{LakshtanovVainberg2017}. Their reconstruction theorem is proved for real-valued potentials. For each prescribed point $z_0$, it gives reconstruction in a neighbourhood of $z_0$ for a generic set of potentials; this set may depend on $z_0$, and the neighbourhood may depend on the potential \cite[Theorem~2.3 and the following remark]{LakshtanovVainberg2017}. The authors also explain that their treatment of complex-valued potentials requires additional smoothness. These qualifications distinguish that reconstruction result from global uniqueness for arbitrary complex $L^p$ potentials.

Theorem~\ref{main-thm} extends the Bl\r{a}sten--Tzou--Wang uniqueness result to the remaining range $1<p\leq4/3$, with no reality, spectral positivity, or genericity assumption. The proof retains the quadratic-phase construction and the Neumann-series estimates. The additional work concerns the finitely many Born terms left after truncation: we average their integral identities over the phase center and prove convergence at the original $L^p$ regularity, without first improving the integrability of $V-\widetilde V$.

While completing this manuscript, we became aware of independent work by Ali Feizmohammadi, which proves the same theorem by substantially different methods. The two works were developed independently, and the respective authors agreed to post their manuscripts simultaneously on arXiv.

\subsection{Strategy of the proof}

It suffices to treat $1<p<2$, since on a bounded domain every $L^p$ potential with $p\geq2$ belongs to $L^{p_0}$ for any fixed $p_0\in(1,2)$.

The argument has three ingredients.

\paragraph{Averaging over the phase center.}
We use Bukhgeim's solutions with quadratic phase $\tfrac12(z-z_0)^2$ for $V$ and the conjugate phase for $\widetilde V$. They are normalized in the usual way, by subtracting from the Cauchy transforms $A=\bar\partial^{-1}(V/4)$ and $\widetilde A=\partial^{-1}(\widetilde V/4)$ their values at the center $z_0$. Instead of letting $\tau\to\infty$ at a fixed center, we multiply Alessandrini's identity by $\tau\varphi(z_0)/\pi$, with $\varphi\in C_0^\infty(\Omega)$, and integrate in $z_0$. The leading term becomes
\[
 \int_\Omega (V-\widetilde V)(z)E_\tau\varphi(z)\,\dd^2z,
\]
where $E_\tau$ is a unimodular Fourier multiplier with $E_\tau\varphi\to\varphi$ uniformly. Thus only $V-\widetilde V\in L^1$ is needed to recover the potential difference distributionally. Center averaging also appears in the $p>4/3$ argument of Bl\r{a}sten--Tzou--Wang \cite{BlastenTzouWang2020}; here we carry it through every fixed Born order.

\paragraph{Finitely many Born terms.}
After the Neumann series of the two solutions are expanded, the usual operator estimates show that all sufficiently high Born orders contribute $o(1)$. Each of the finitely many remaining terms is a nested integral of Cauchy kernels. In such a term the $z_0$-dependence is a sum of quadratic phases whose signs add up to one. Completing the square turns the $z_0$-integral into $E_\tau$ applied to one of the four functions
\[
 \varphi,\qquad \varphi A,\qquad \varphi\widetilde A,
 \qquad \varphi A\widetilde A,
\]
evaluated at an affine combination $c$ of the integration variables, times a residual quadratic oscillation independent of $z_0$. Replacing $E_\tau$ by the identity leaves a $\tau$-independent amplitude. Once this amplitude is shown to be integrable, the residual oscillation makes its contribution tend to zero. The errors made in this replacement are controlled by the uniform convergence $E_\tau\varphi\to\varphi$ and by the strong $L^2$ convergence of $E_\tau(\varphi A)$ and $E_\tau(\varphi\widetilde A)$. They are paired with positive densities obtained by integrating the absolute kernel over all variables except $c$. Bounding these densities is where $p>1$ enters: each inserted Born block acts boundedly because $|z|^{-1}*|z|^{-1}$ belongs locally to $L^{p'}$.

\paragraph{Duality for the product.}
The remaining error involves $E_\tau(\varphi A\widetilde A)-\varphi A\widetilde A$. Here $A\widetilde A\in L^{r/2}$, where $r=2p/(2-p)$, and $r/2<2$ when $p<4/3$, so $L^2$ convergence is unavailable. We instead transpose $E_\tau$ onto the oscillatory kernel. After transposition both chains of Cauchy transforms end at a smooth cutoff, and each terminal Cauchy transform gains $\tau^{-1/2}$. The two gains offset the prefactor $\tau$ and give a bound for the functional on $L^{r/2}$ that is uniform in $\tau$. Convergence then follows by approximating $\varphi A\widetilde A$ by smooth functions.

\subsection{Organization}

Sections~\ref{sec:preliminaries}--\ref{sec:cgo-construction} give the analytic estimates and the CGO construction. Section~\ref{sec:averaged-identity} reduces the averaged identity to finitely many Born terms. Section~\ref{sec:fixed-born-orders} explains the center selected by their phases and the common algebra of their terminal factors. Section~\ref{sec:positive-densities} proves the required density bounds and justifies the fixed-order identities at the original $L^p$ regularity. Section~\ref{sec:fixed-cancellations} proves cancellation, isolating the product error before estimating its transpose. Section~\ref{sec:final-proof} completes the proof.

An Isabelle/HOL formalization of the uniqueness argument, relative to explicitly identified results from the literature, together with translation and audit material and scripts for reproducing the checks, is available at \url{https://github.com/catalin-carstea/Lp-Schrodinger-Isabelle-formalization}.

\section{Notation and elementary tools}\label{sec:preliminaries}

\subsection{Standing notation}

We identify $\C$ with $\R^2$ and write $\dd^2z=\dd x\,\dd y$ for Lebesgue measure. Unless a domain is specified, integrals in complex variables are over $\C$. The notation $L^a_c(\C)$ denotes the compactly supported functions in $L^a(\C)$.

Until the final step we assume $1<p<2$.  Extend both potentials by zero outside $\Omega$, fix an open set $X$ with $\Omega\Subset X\Subset\C$, and write
\begin{equation}\label{Lp-q-notation}
        q=\frac V4,
        \qquad \widetilde q=\frac{\widetilde V}{4},
        \qquad Q=V-\widetilde V,
\end{equation}
\begin{equation*}
        A=\bar\partial^{-1}q,
        \qquad
        \widetilde A=\partial^{-1}\widetilde q.
\end{equation*}
Here $Q$ occurs in Alessandrini's identity, while $q$ and $\widetilde q$ generate the two Neumann series.  Their terminal factors involve the Cauchy transforms $A$ and $\widetilde A$.  Set
\begin{equation}\label{r-alpha-theta-sigma-def}
        r=\frac{2p}{2-p},
        \qquad
        p'=\frac{p}{p-1},
        \qquad
        \beta=\min\left\{1,\frac{r}{p'}\right\}.
\end{equation}
Since $q$ and $\widetilde q$ are compactly supported, the global Hardy--Littlewood--Sobolev estimate gives
\begin{equation}\label{global-cauchy-primitives}
        A,\widetilde A\in L^r(\C),
        \qquad
        \|A\|_{L^r(\C)}+\|\widetilde A\|_{L^r(\C)}
        \leq C_p\bigl(\|q\|_{L^p}+\|\widetilde q\|_{L^p}\bigr).
\end{equation}
They also belong to $W^{1,p}_{\mathrm{loc}}(\C)$ by the standard Cauchy-transform and Beurling-transform bounds recalled below.
Choose once and for all
\begin{equation}\label{section3-alpha-restated}
        0<\theta_0<1-\frac1p,
        \qquad
        \alpha=1-\frac1p-\theta_0>0,
\end{equation}
and set
\begin{equation}\label{sigma-def-general}
        \sigma=\frac{\beta}{2}+(1-\beta)\alpha.
\end{equation}
Here $r$ is the Sobolev exponent associated with $p$; $\alpha$ is the decay exponent used in the $L^\infty$ estimates, and $\sigma$ is the corresponding exponent in the $L^{p'}$ tail estimate. The parameter $\beta$ equals $r/p'$ in the interpolation case and $1$ when the fixed-support embedding $L^r\hookrightarrow L^{p'}$ applies.  We also write
\begin{equation}\label{tail-center-weights}
        \omega(z_0)=1+|A(z_0)|,
        \qquad
        \widetilde\omega(z_0)=1+|\widetilde A(z_0)|.
\end{equation}

The parameter $z_0$ is the center of the quadratic phase. We use the holomorphic phase $\Phi_{z_0}$ and the associated real phase $\psi_{z_0}=2\operatorname{Re}\Phi_{z_0}$:
\begin{equation}
\Phiz(z)=\frac{1}{2}(z-z_0)^2,
\qquad
\psiz(z)=\Phiz(z)+\bPhiz(z)=(x-x_0)^2-(y-y_0)^2.
\end{equation}
When the center is fixed in a local argument, we sometimes suppress it and write $\psi$ for $\psi_{z_0}$.  We use the complex derivatives
\begin{equation}
\pr=\frac{1}{2}\left(\pr_x-i\pr_y\right),
\qquad
\bar\pr=\frac{1}{2}\left(\pr_x+i\pr_y\right),
\end{equation}
so that
\begin{equation}
\pr\psiz=\pr\Phiz=z-z_0,
\qquad
\bar\pr\psiz=\bar\pr\bPhiz=\bar z-\bar z_0,
\qquad
\pr\bar\pr=\frac{1}{4}\triangle.
\end{equation}

\subsection{Cauchy transforms and localized kernels}

The inverse operators for these first-order derivatives are the Cauchy transforms.  For $f\in C_0^\infty(\C)$ we set
\begin{equation}\label{cauchy-transforms-def}
\pr^{-1} f(z)=\frac{1}{\pi}\int_{\C}\frac{f(z')}{\bar z-\bar z'}\dd^2z',\quad
\bar \pr^{-1} f(z)=\frac{1}{\pi}\int_{\C}\frac{f(z')}{ z-z'}\dd^2z'.
\end{equation}
These operators invert $\partial$ and $\bar\partial$ in the sense of distributions:
\begin{equation*}
        \pr\pr^{-1}f=f,\qquad \bar\pr\bar\pr^{-1}f=f.
\end{equation*}
We need only local Sobolev bounds and the Hardy--Littlewood--Sobolev gain for these transforms.  In particular, no global bound of the form $\partial^{-1}:L^a(\C)\to W^{1,a}(\C)$ is used for the Cauchy transform itself on the whole plane.

\begin{lem}\label{lem-cauchy-basic}
Let $X\Subset\C$ be bounded, and let $1<a<2$.  Define $a^*$ by
\begin{equation*}
        \frac1{a^*}=\frac1a-\frac12 .
\end{equation*}
If $f\in L^a(\C)$ is supported in a fixed bounded set, then
\begin{equation}\label{cauchy-HLS-bound}
        \|\partial^{-1}f\|_{L^{a^*}(X)}+
        \|\bar\partial^{-1}f\|_{L^{a^*}(X)}
        \leq C\|f\|_{L^a(\C)}.
\end{equation}
Moreover,
\begin{equation}\label{cauchy-local-W1a}\aliaslabel{cauchy-local-W1q}
        \|\partial^{-1}f\|_{W^{1,a}(X)}+
        \|\bar\partial^{-1}f\|_{W^{1,a}(X)}
        \leq C_X\|f\|_{L^a(\C)}.
\end{equation}
More generally, if $1<\kappa<\infty$ and $f\in L^\kappa(\C)$ is supported in a fixed bounded set $E$, then
\begin{equation}\label{cauchy-local-W1s-all}
        \|\partial^{-1}f\|_{W^{1,\kappa}(X)}+
        \|\bar\partial^{-1}f\|_{W^{1,\kappa}(X)}
        \leq C_{X,E,\kappa}\|f\|_{L^\kappa(\C)}.
\end{equation}
\end{lem}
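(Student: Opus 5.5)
The plan is to reduce everything to two classical facts: the Hardy--Littlewood--Sobolev inequality in the plane and the Calder\'on--Zygmund boundedness of the Beurling transform. It suffices to treat $\bar\partial^{-1}$, since $\partial^{-1}f=\overline{\bar\partial^{-1}\bar f}$. The kernel satisfies $\bigl|\tfrac{1}{\pi(z-z')}\bigr|=\tfrac1\pi|z-z'|^{-1}$, which is a Riesz potential of order one in two dimensions. The HLS inequality with exponents $1<a<2$ and $1/a^*=1/a-1/2$ then gives
\[
\|\bar\partial^{-1}f\|_{L^{a^*}(\C)}\le C\|f\|_{L^a(\C)}
\]
globally, and \eqref{cauchy-HLS-bound} follows by restricting to $X$. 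Note that \eqref{cauchy-HLS-bound} does not even need the support hypothesis.

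For the local Sobolev bounds \eqref{cauchy-local-W1s-all}, which contain \eqref{cauchy-local-W1a} as the case $\kappa=a$, I would first prove them for $f\in C_0^\infty$ supported in $E$ and then pass to general $f$ by density. The derivatives come out as follows.
\begin{itemize}
\item The identity $\bar\partial\bar\partial^{-1}f=f$ holds distributionally.
\item For the other derivative, $\partial\bar\partial^{-1}f=Sf$, where $S$ is the Beurling transform, the principal value convolution with $-\tfrac1{\pi}(z-z')^{-2}$. Equivalently, $S$ is the Fourier multiplier $\bar\xi/\xi$ up to convention.
\item Since $S$ is a Calder\'on--Zygmund operator, it is bounded on $L^\kappa(\C)$ for every $1<\kappa<\infty$.
\end{itemize}
This gives $\|\nabla\bar\partial^{-1}f\|_{L^\kappa(\C)}\le C_\kappa\|f\|_{L^\kappa}$, and in particular on $X$.

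For the zeroth-order term I would avoid any global statement, consistent with the remark before the lemma, and use the compact support directly. For $z\in X$ and $z'\in E$ we have $|z-z'|\le R:=\operatorname{diam}(X\cup E)$. Hence on $X$,
\[
|\bar\partial^{-1}f|\le \tfrac1\pi\bigl(|z|^{-1}\mathbf 1_{|z|\le R}\bigr)*|f|.
\]
The truncated kernel lies in $L^1$, so Young's inequality gives $\|\bar\partial^{-1}f\|_{L^\kappa(X)}\le 2R\|f\|_{L^\kappa}$ for every $\kappa\in[1,\infty]$. Combining the two estimates yields the $W^{1,\kappa}(X)$ bound with a constant depending on $X$, $E$ and $\kappa$.

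The only genuinely delicate step is identifying the distributional derivative $\partial\bar\partial^{-1}f$ with $Sf$ for rough $f$, since the kernel $(z-z')^{-2}$ is not locally integrable. I would handle it by the standard route. For smooth compactly supported $f$, differentiate under the integral after writing $\bar\partial^{-1}f=\tfrac1\pi\int f(z-w)\,w^{-1}\dd^2w$. Then integrate by parts on $\{|w|>\varepsilon\}$ and let $\varepsilon\to0$; the boundary term vanishes because the angular mean of $\bar w/w$ is zero. For general $f\in L^\kappa$ supported in $E$, approximate by mollified $f_n$ supported in a slightly larger fixed set. Then $\bar\partial^{-1}f_n\to\bar\partial^{-1}f$ in $L^\kappa(X)$ by the Young bound, and $Sf_n\to Sf$ in $L^\kappa$. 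The distributional identities therefore pass to the limit, completing the proof.
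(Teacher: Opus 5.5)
Your proposal is correct and follows the paper's proof: Hardy--Littlewood--Sobolev via the Riesz-kernel majorant $|z|^{-1}$ gives \eqref{cauchy-HLS-bound}, the identity $\bar\partial\bar\partial^{-1}f=f$ together with Beurling-transform bounds controls the first derivatives, and a Young estimate with the truncated kernel handles the local zero-order term. Your single use of $\|\mathbf 1_{\{|z|\le R\}}|z|^{-1}\|_{L^1}$ for every $\kappa$ slightly simplifies the paper's case split in $\kappa$, and your density argument supplies the identification $\partial\bar\partial^{-1}f=Sf$ that the paper leaves to the cited reference.
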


\begin{proof}
These are standard consequences of the planar Hardy--Littlewood--Sobolev inequality and the $L^\kappa$-boundedness of the Beurling transform; see, for example, \cite{AstalaIwaniecMartin2009}.  The estimate \eqref{cauchy-HLS-bound} follows from the pointwise majorant by the Riesz kernel $|z|^{-1}$.  The identities $\partial\partial^{-1}f=f$ and $\bar\partial\bar\partial^{-1}f=f$, together with the Beurling-transform bounds for the cross derivatives, control the first derivatives.  For the local zero-order term, the case $1<\kappa<2$ follows from Hardy--Littlewood--Sobolev and boundedness of $X$, the case $\kappa=2$ from Young's inequality with the truncated kernel $\mathbf 1_{|z|\le R}|z|^{-1}\in L^1$, and the case $\kappa>2$ from the same inequality with that kernel in $L^{\kappa'}$, where $\kappa'<2$.
\end{proof}

For each fixed pair of
orders $(j,k)$ all variables in the associated finite expansions lie in a compact
set $X_{j,k}\Subset\C$, depending only on the fixed orders, supports, and cutoffs.  Thus every difference $z-w$ of two such variables
satisfies $|z-w|\le R_{j,k}$.  To simplify notation we suppress this harmless
dependence and write
\begin{equation}\label{localized-kernels-def}
        k_R(z)=\frac{\mathbf 1_{\{|z|\le R\}}}{|z|},
        \qquad
        K_R=k_R*k_R .
\end{equation}
The localized kernels satisfy
\begin{equation}\label{kernel-basic-facts-general}
        k_R\in L^\kappa(\C)\quad(1\le \kappa<2),
        \qquad
        K_R\in L^\kappa(\C)\quad(1\le \kappa<\infty).
\end{equation}
Indeed, the assertion for $k_R$ follows by polar coordinates.  For $K_R$, the case $\kappa=1$ follows from $k_R\in L^1$, while for $1<\kappa<\infty$ one may choose $a=2\kappa/(\kappa+1)<2$ and apply Young's inequality:
\begin{equation}\label{KR-Ls-young}
        \|K_R\|_{L^\kappa}
        \leq \|k_R\|_{L^a}^2,
        \qquad
        1+\frac1\kappa=\frac2a.
\end{equation}
We shall repeatedly use the following consequences.

\begin{lem}\label{lem-localized-kernel-mapping}
Let $1<p<2$ and $r=2p/(2-p)$.  On fixed compact supports,
\begin{equation}\label{kernel-mapping-facts-general}
        k_R*L^p\subset L^r,
        \qquad
        k_R*L^r\subset L^\infty,
        \qquad
        K_R*L^p\subset L^\infty .
\end{equation}
Moreover $k_R*L^2_c\subset L^\kappa_c$ for every finite $\kappa$.
\end{lem}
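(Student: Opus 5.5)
Each inclusion follows from Young's inequality or Hardy--Littlewood--Sobolev applied to the localized kernels, using the integrability facts \eqref{kernel-basic-facts-general}. Throughout, functions are supported in a fixed compact set and all convolutions are evaluated on a fixed compact set. Since the kernels are truncated at radius $R$, the outputs are also compactly supported, in a set enlarged by $R$ (or $2R$ for $K_R$).

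\emph{First inclusion.} For $k_R*L^p\subset L^r$ with $1/r=1/p-1/2$, we use the pointwise bound $k_R\le|z|^{-1}$. The planar Hardy--Littlewood--Sobolev inequality for the Riesz potential of order one then gives $\|k_R*f\|_{L^r}\le C\|f\|_{L^p}$, exactly as in \eqref{cauchy-HLS-bound}. Since $1<p<2$, we have $r\in(2,\infty)$, so the exponents are admissible.

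\emph{Second inclusion.} For $k_R*L^r\subset L^\infty$, Hölder's inequality gives
\[
|k_R*f(z)|\le\|k_R\|_{L^{r'}}\|f\|_{L^r}.
\]
Here $r'<2$ because $r>2$, so $k_R\in L^{r'}$ by \eqref{kernel-basic-facts-general}.

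\emph{Third inclusion.} For $K_R*L^p\subset L^\infty$, Hölder's inequality likewise gives $|K_R*f|\le\|K_R\|_{L^{p'}}\|f\|_{L^p}$. This is finite since $K_R\in L^{p'}$ for $p'<\infty$, by \eqref{KR-Ls-young}. Alternatively one could compose the first two inclusions, using $K_R*f=k_R*(k_R*f)$ and noting that $k_R*f$ is compactly supported.

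\emph{Final claim.} For $k_R*L^2_c\subset L^\kappa_c$, fix a finite $\kappa\ge2$ (smaller $\kappa$ then follows from compact support). Choose $a$ with $1+1/\kappa=1/a+1/2$, that is, $1/a=1/2+1/\kappa$. Then $a\in[1,2)$, so $k_R\in L^a$, and Young's inequality gives $\|k_R*f\|_{L^\kappa}\le\|k_R\|_{L^a}\|f\|_{L^2}$. Compact support of the output again follows from the truncation of $k_R$.

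There is no genuine obstacle in this argument. The only care needed is in the exponent bookkeeping: checking that $r>2$ (so that $r'<2$), and that $p'<\infty$ holds precisely because $p>1$. This last point is where the hypothesis $p>1$ is used.
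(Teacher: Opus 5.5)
Your proposal is correct and matches the paper's proof essentially step by step: the Hardy--Littlewood--Sobolev inequality via the pointwise bound $k_R\le|z|^{-1}$, then Young/H\"older with $k_R\in L^{r'}$ ($r'<2$) and $K_R\in L^{p'}$ ($p'<\infty$), and for the last claim Young's inequality with $k_R\in L^a$, $1/a=1/2+1/\kappa$. The paper treats $\kappa\le2$ separately using $k_R\in L^1$; your $a=1$ case together with the compact-support remark covers the same ground.
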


\begin{proof}
The first estimate in \eqref{kernel-mapping-facts-general} is the
Hardy--Littlewood--Sobolev estimate for the two-dimensional Riesz kernel
$|z|^{-1}$, since $1/r=1/p-1/2$, and the truncated kernel $k_R$ is pointwise
bounded by this kernel.  For the second estimate, $r>2$, hence
$r'=r/(r-1)<2$, and Young's inequality gives
\[
        \|k_R*f\|_\infty\le \|k_R\|_{r'}\|f\|_r .
\]
For the third estimate, \eqref{kernel-basic-facts-general} gives
$K_R\in L^{p'}$, and Young's inequality yields
\[
        \|K_R*f\|_\infty\le \|K_R\|_{p'}\|f\|_p .
\]
Finally, if $f\in L^2_c$ and $1\le \kappa\le2$, then $k_R\in L^1$ and Young's
inequality give $k_R*f\in L^2$, hence $k_R*f\in L^\kappa$ on fixed compact supports.
If $2<\kappa<\infty$, choose $\gamma<2$ so that $1+1/\kappa=1/2+1/\gamma$; then
$k_R\in L^\gamma$, and Young's inequality gives $k_R*f\in L^\kappa$.  All constants
depend only on $R$, the fixed compact supports, and the displayed exponents.
\end{proof}

\subsection{The center average and its transpose}

We use the Fourier-transform convention
\[
 \widehat f(\xi)=\int_{\R^2}e^{-ix\cdot\xi}f(x)\,\dd x,
 \qquad
 f(x)=\frac{1}{(2\pi)^2}\int_{\R^2}e^{ix\cdot\xi}\widehat f(\xi)\,\dd\xi.
\]
The leading quadratic oscillation is described by the following averaging operator.

\begin{lem}\label{lem-Etau}
For $\tau>0$ and $f\in\mathcal S(\C)$ define
\begin{equation}\label{Etau-def}
        E_\tau f(c)=\frac{\tau}{\pi}\int_{\C}e^{i\tau\psi_c(z_0)}f(z_0)\dd^2z_0,
        \qquad
        \psi_c(z_0)=(x_0-\operatorname{Re}c)^2-(y_0-\operatorname{Im}c)^2.
\end{equation}
With the Fourier normalization used here, $E_\tau$ is the multiplier operator
\begin{equation}\label{Etau-multiplier}
        \widehat{E_\tau f}(\xi)=m_\tau(\xi)\widehat f(\xi),
        \qquad
        m_\tau(\xi)=e^{-i\psi_0(\xi)/(4\tau)}.
\end{equation}
Consequently, $E_\tau$ extends uniquely to an isometry on $L^2(\C)$ and
\begin{equation}\label{Etau-strong}
        E_\tau f\longrightarrow f
        \qquad\text{strongly in }L^2(\C)
\end{equation}
for every $f\in L^2(\C)$.  Moreover, for every
$\varphi\in C_0^\infty(\C)$,
\begin{equation}\label{Etau-smooth-local}
        \|E_\tau\varphi-\varphi\|_{L^\infty(\C)}\longrightarrow0.
\end{equation}
For every $f\in L^1(\C)$, the integral in \eqref{Etau-def} is absolutely
convergent for every $c$ and defines a bounded function satisfying
\begin{equation}\label{Etau-L1-Linfty-fixed-tau}
        \|E_\tau f\|_{L^\infty(\C)}
        \leq \frac{\tau}{\pi}\|f\|_{L^1(\C)}.
\end{equation}
This $L^1$ realization agrees with the $L^2$ multiplier realization on
$L^1\cap L^2$.  In particular, if $F\in L^1(\C)$, then
\begin{equation}\label{Etau-smooth-pairing}
        \int_{\C}F(c)E_\tau\varphi(c)\dd^2c
        \longrightarrow
        \int_{\C}F(c)\varphi(c)\dd^2c.
\end{equation}
\end{lem}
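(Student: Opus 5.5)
We prove Lemma~\ref{lem-Etau} by writing $E_\tau f$ as a convolution with a chirp and computing that chirp's Fourier transform. The remaining claims then follow from standard multiplier and density arguments.

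\textbf{Convolution form and the multiplier.} Since $\psi_c(z_0)=\psi_0(z_0-c)$, we have
\[
 E_\tau f=G_\tau*f,\qquad G_\tau(w)=\frac{\tau}{\pi}e^{i\tau\psi_0(w)},\qquad \psi_0(w)=u^2-v^2 ,
\]
where $G_\tau$ is even. The transform $\widehat{G_\tau}$ is computed as a tempered distribution, either as the limit $\epsilon\to0^+$ of the Gaussians $e^{(i\tau-\epsilon)u^2}$ and $e^{(-i\tau-\epsilon)v^2}$, or directly from the one-dimensional Fresnel integrals
\[
 \int e^{\pm i\tau u^2-iu\xi_1}\dd u=\sqrt{\pi/\tau}\,e^{\pm i\pi/4}e^{\mp i\xi_1^2/(4\tau)} .
\]
The phases $e^{i\pi/4}$ and $e^{-i\pi/4}$ cancel, and the amplitude product $\pi/\tau$ cancels the prefactor $\tau/\pi$. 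This gives $\widehat{G_\tau}(\xi)=e^{-i(\xi_1^2-\xi_2^2)/(4\tau)}=m_\tau(\xi)$. For $f\in\mathcal S$, the convolution of the tempered distribution $G_\tau$ with $f$ agrees with the absolutely convergent integral in \eqref{Etau-def}. Hence \eqref{Etau-multiplier} holds.

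\textbf{$L^2$ statements.} Because $|m_\tau|=1$, Plancherel shows that $E_\tau$ is an isometry on $\mathcal S$, so it extends uniquely to an isometry of $L^2$. We have $m_\tau(\xi)\to1$ pointwise as $\tau\to\infty$ and $|m_\tau-1|\le2$. Dominated convergence applied to $\int|m_\tau-1|^2|\widehat f|^2$ therefore gives \eqref{Etau-strong}.

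\textbf{Uniform convergence for $\varphi\in C_0^\infty$.} By Fourier inversion,
\[
 \|E_\tau\varphi-\varphi\|_\infty\le(2\pi)^{-2}\int|m_\tau(\xi)-1||\widehat\varphi(\xi)|\dd\xi .
\]
Since $|m_\tau(\xi)-1|\le|\xi|^2/(4\tau)$ and $\widehat\varphi$ is Schwartz, this is at most $C\tau^{-1}\int|\xi|^2|\widehat\varphi|$, which tends to $0$. This proves \eqref{Etau-smooth-local}.

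\textbf{$L^1$ realization.} Since $|G_\tau|=\tau/\pi$, the integral converges absolutely for $f\in L^1$ and \eqref{Etau-L1-Linfty-fixed-tau} is immediate. To compare the two realizations on $L^1\cap L^2$, take Schwartz $f_n\to f$ in both $L^1$ and $L^2$, for example by mollifying and cutting off. Then $E_\tau f_n$ converges uniformly to the $L^1$ realization, by \eqref{Etau-L1-Linfty-fixed-tau}, and in $L^2$ to the multiplier realization. Passing to an a.e.\ convergent subsequence identifies the two limits.

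\textbf{The pairing \eqref{Etau-smooth-pairing}.} This is the only point needing care, since $F$ is merely in $L^1$. It follows at once from $|\int F(E_\tau\varphi-\varphi)|\le\|F\|_{L^1}\|E_\tau\varphi-\varphi\|_\infty$ together with \eqref{Etau-smooth-local}. The main technical step is therefore the Fresnel computation with correct constants and signs, which must be justified distributionally, for instance by Gaussian regularization followed by a limit.
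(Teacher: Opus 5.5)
Your proposal is correct and follows essentially the same route as the paper. You identify $E_\tau$ as convolution with the chirp $(\tau/\pi)e^{i\tau\psi_0}$, whose Fourier transform is $m_\tau$, then use Plancherel with dominated convergence, Fourier inversion for $\varphi\in C_0^\infty$, the trivial $L^1\to L^\infty$ bound plus Schwartz approximation, and H\"older for the pairing. Beyond the paper, you carry out the Fresnel computation explicitly (the signs and constants check out) and obtain an $O(\tau^{-1})$ rate from $|m_\tau-1|\le|\xi|^2/(4\tau)$ where the paper uses dominated convergence.
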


\begin{proof}
The Fourier transform of $(\tau/\pi)e^{i\tau\psi_0}$ is the function
$m_\tau$ in \eqref{Etau-multiplier}.  Translation of the quadratic kernel
therefore gives \eqref{Etau-multiplier}.  Since $|m_\tau|=1$, Plancherel's
theorem shows that $E_\tau$ extends to an $L^2$ isometry.  Also
$m_\tau(\xi)\to1$ pointwise, and hence dominated convergence gives
\begin{equation*}
        \|(m_\tau-1)\widehat f\|_{L^2}\longrightarrow0
\end{equation*}
for every $f\in L^2(\C)$.  This proves \eqref{Etau-strong}.

If $\varphi\in C_0^\infty(\C)$, then $\widehat\varphi\in L^1(\C)$, and Fourier
inversion gives
\begin{equation*}
        \|E_\tau\varphi-\varphi\|_{L^\infty}
        \le C\int_{\C}|m_\tau(\xi)-1|\,|\widehat\varphi(\xi)|\dd^2\xi.
\end{equation*}
The right-hand side tends to zero by dominated convergence, proving
\eqref{Etau-smooth-local}.  The bound \eqref{Etau-L1-Linfty-fixed-tau} follows immediately from
\eqref{Etau-def}.  Agreement of the two realizations on $L^1\cap L^2$ follows
by approximating with Schwartz functions in both spaces.  The last assertion
then follows from
\begin{equation*}
        \left|\int F(E_\tau\varphi-\varphi)\right|
        \le \|F\|_{L^1}\|E_\tau\varphi-\varphi\|_{L^\infty}.
\end{equation*}
\end{proof}

\paragraph{The bilinear transpose.}
For compactly supported $f$ and $h$, define the transpose with respect to the bilinear pairing
$\int f(c)h(c)\,\dd^2c$ by
\begin{equation}\label{Etau-transpose-def}
        (E_\tau^t h)(z_0)
        =\frac{\tau}{\pi}\int e^{i\tau\psi_c(z_0)}h(c)\,\dd^2c
        =\frac{\tau}{\pi}\int e^{i\tau\psi_{z_0}(c)}h(c)\,\dd^2c .
\end{equation}
This is the transpose for the bilinear pairing, not the Hilbert-space adjoint; no complex conjugation is involved.  The equality of the two kernels follows from $\psi_c(z_0)=\psi_{z_0}(c)$.

\subsection{Quadratic phase algebra}

The elementary identity below separates the phase center from the remaining quadratic phase. For fixed spatial variables, the signed combination $c$ is the unique stationary point in $z_0$.

\begin{lem}\label{lem-signed-quadratic-normal-form}
Let $\varepsilon_0,\ldots,\varepsilon_N\in\{\pm1\}$ satisfy
$\sum_{\nu=0}^N\varepsilon_\nu=1$.  For spatial variables
$y_0,\ldots,y_N\in\C$ and center parameter $z_0\in\C$, put
$c=\sum_{\nu=0}^N\varepsilon_\nu y_\nu$.  Then
\begin{equation}\label{signed-quadratic-identity}
        \sum_{\nu=0}^N\varepsilon_\nu\psi_{z_0}(y_\nu)
        =\psi_{z_0}(c)+
        \operatorname{Re}\left(\sum_{\nu=0}^N\varepsilon_\nu y_\nu^2-c^2\right).
\end{equation}
The second term on the right is independent of $z_0$.  If $N\ge1$, its
restriction to each fiber $\{c=\mathrm{const}\}$, an affine subspace of real
dimension $2N$, has nondegenerate quadratic part.
\end{lem}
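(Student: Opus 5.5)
We work with the holomorphic phase. Since $\psi_{z_0}(y)=\operatorname{Re}\bigl((y-z_0)^2\bigr)$, real-linearity of $\operatorname{Re}$ gives
\[
\sum_\nu\varepsilon_\nu\psi_{z_0}(y_\nu)=\operatorname{Re}\sum_\nu\varepsilon_\nu (y_\nu-z_0)^2 .
\]
Expanding each square and using $\sum\varepsilon_\nu=1$ gives
\[
\sum_\nu\varepsilon_\nu (y_\nu-z_0)^2=\sum_\nu\varepsilon_\nu y_\nu^2-2z_0c+z_0^2 .
\]
We also have $(c-z_0)^2=c^2-2z_0c+z_0^2$. Subtracting the two expressions yields
\[
\sum_\nu\varepsilon_\nu (y_\nu-z_0)^2=(c-z_0)^2+\sum_\nu\varepsilon_\nu y_\nu^2-c^2 .
\]
Taking real parts gives \eqref{signed-quadratic-identity}, because $\operatorname{Re}(c-z_0)^2=\psi_{z_0}(c)$. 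The remainder $\sum\varepsilon_\nu y_\nu^2-c^2$ visibly contains no $z_0$.

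For the nondegeneracy claim, we parametrize the fiber. Fix $c$. We may solve for one variable with a given sign, say $y_0=\varepsilon_0\bigl(c-\sum_{\nu\ge1}\varepsilon_\nu y_\nu\bigr)$. This is a real-affine bijection $\C^N\cong\R^{2N}\to\{c=\mathrm{const}\}$, which confirms that the fiber has real dimension $2N$. The quadratic part of the restricted function is the real part of the holomorphic quadratic form obtained by substituting the homogeneous solution $y_0=-\varepsilon_0\sum_{\nu\ge1}\varepsilon_\nu y_\nu$. Using $\varepsilon_0^2=1$, this form is
\[
q(y_1,\dots,y_N)=\sum_{\nu\ge1}\varepsilon_\nu y_\nu^2+\varepsilon_0\Bigl(\sum_{\nu\ge1}\varepsilon_\nu y_\nu\Bigr)^2 ,
\]
and the $c^2$ term contributes nothing since $c$ is constant on the fiber.

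The key reduction is a general fact: if $q(w)=w^TMw$ with $M$ complex symmetric, then $\operatorname{Re}q$, viewed as a real quadratic form on $\R^{2N}$, is nondegenerate exactly when $M$ is invertible. To see this, note that the real Hessian is $2\operatorname{Re}\bigl(\bar h^T\cdot\bigr)$-type pairing. More concretely, $\operatorname{Re}(w^TMh)=0$ for all $h$ forces $w^TM=0$, since we may replace $h$ by $ih$. It then remains to compute $\det M$, where
\[
M=D+\varepsilon_0\,vv^T,\qquad D=\operatorname{diag}(\varepsilon_1,\dots,\varepsilon_N),\qquad v=(\varepsilon_1,\dots,\varepsilon_N)^T .
\]
By the matrix determinant lemma,
\[
\det M=\det D\,\bigl(1+\varepsilon_0 v^TD^{-1}v\bigr)=\pm\Bigl(1+\varepsilon_0\sum_{\nu\ge1}\varepsilon_\nu\Bigr)=\pm\bigl(1+\varepsilon_0(1-\varepsilon_0)\bigr).
\]
We need this to be nonzero. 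If $\varepsilon_0=1$ the bracket is $1$. If $\varepsilon_0=-1$ it is $1-2=-1$. So $\det M\neq0$ in either case.

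The only delicate point is checking that the choice of eliminated index does not matter, and that $N\ge1$ guarantees a nonempty set of free variables. Both are routine; the determinant computation is the step where the hypothesis $\sum\varepsilon_\nu=1$ is genuinely used.
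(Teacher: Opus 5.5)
Your proof is correct, and the identity part is the paper's argument. You expand $\operatorname{Re}((y_\nu-z_0)^2)$ and use $\sum\varepsilon_\nu=1$ to match the $z_0^2$ coefficient. Your passage from complex to real nondegeneracy, testing $\operatorname{Re}(w^TMh)=0$ also with $ih$, is also exactly the device the paper uses.

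The difference lies in how you prove complex nondegeneracy on the fiber. The paper stays coordinate-free. It writes the fiber directions as $E=\{u:B(u,\mathbf 1)=0\}$, with $B(u,v)=\sum\varepsilon_\nu u_\nu v_\nu$ and $\mathbf 1=(1,\ldots,1)$. Then $E^{\perp_B}=\operatorname{span}\mathbf 1$, and this meets $E$ only at $0$ because $B(\mathbf 1,\mathbf 1)=\sum\varepsilon_\nu=1\neq0$. You instead eliminate $y_0$, write the restricted form as $D+\varepsilon_0vv^T$, and apply the matrix determinant lemma.

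The paper's route is shorter. It shows directly that only $\sum\varepsilon_\nu\neq0$ matters, and it leaves no choice of eliminated index to justify. Your route produces the explicit value $\det M=\varepsilon_0\det D=\prod_{\nu=0}^N\varepsilon_\nu$; for a general sign sum $S$ it is $S\prod_{\nu}\varepsilon_\nu$. So the fiber form is unimodular for every admissible sign pattern, which would be convenient if one wanted explicit constants such as $C_M$ in Lemma~\ref{lem-quadratic-RL}.

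One small correction: the polar form of $\operatorname{Re}(w^TMw)$ is $2\operatorname{Re}(w^TMh)$, with no conjugation. Drop the sentence mentioning $\bar h^T$; the concrete argument that follows it is the right one.
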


\begin{proof}
The identity follows by expanding
$\psi_{z_0}(z)=\operatorname{Re}((z-z_0)^2)$.  Since $c^2$ is constant on each
fiber, the quadratic part of the restriction is the real part of the complex
quadratic form $u\mapsto B(u,u)$ on
\begin{equation*}
        E=\{u\in\C^{N+1}:\textstyle\sum\varepsilon_\nu u_\nu=0\},
        \qquad
        B(u,v)=\sum\varepsilon_\nu u_\nu v_\nu .
\end{equation*}
If $u\in E$ annihilates $E$ under
$B$, then $u$ lies in the $B$-orthogonal complement of $E$, namely the line
spanned by $(1,\ldots,1)$.  This line intersects $E$ only at the origin, since
$\sum\varepsilon_\nu=1$.  Hence $u=0$.  To pass to the underlying real vector space, suppose that $u\in E$ and
$\operatorname{Re}B(u,v)=0$ for every $v\in E$.  Testing also with
$iv\in E$ gives $\operatorname{Im}B(u,v)=0$ for every $v\in E$, and therefore $u=0$ by
complex nondegeneracy.  Thus $\operatorname{Re}B$ is a
nondegenerate real quadratic form on $E$.
\end{proof}

\begin{lem}\label{lem-quadratic-RL}
Let $m\ge0$ and $d\ge1$, and let $P$ be a real quadratic polynomial on
$\R^m\times\R^d$, written in variables $(y,w)$.  Assume that the Hessian of
$w\mapsto P(y,w)$ is invertible.  Then, for every
$F\in L^1(\R^m\times\R^d)$,
\begin{equation}
        \int e^{i\tau P(y,w)}F(y,w)\,\dd y\,\dd w\longrightarrow0
        \qquad (\tau\to+\infty).
\end{equation}
\end{lem}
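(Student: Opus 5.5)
The plan is a density argument combined with an explicit computation for Gaussian-type test functions. Both sides of the claimed convergence define linear functionals on $L^1(\R^m\times\R^d)$ bounded by $\|F\|_{L^1}$, uniformly in $\tau$. So it suffices to prove the conclusion for $F$ in a dense subclass, which I will take to be $C_0^\infty(\R^m\times\R^d)$, or even finite linear combinations of products $g(y)h(w)$ with $g,h\in C_0^\infty$. Given general $F$ and $\epsilon>0$, I choose such a $G$ with $\|F-G\|_{L^1}<\epsilon$. Then
\[
\limsup_{\tau\to\infty}\left|\int e^{i\tau P}F\right|\le \epsilon+\limsup_{\tau\to\infty}\left|\int e^{i\tau P}G\right|,
\]
and the claim reduces to smooth compactly supported $G$.

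For smooth $G$, I fix $y$ and study the inner integral $I(y,\tau)=\int e^{i\tau P(y,w)}G(y,w)\,\dd w$.

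1. Write $P(y,w)=\tfrac12 w^T H w+ L(y)\cdot w + P_0(y)$. Here $H$ is the Hessian in $w$, which is constant because $P$ is quadratic, real, symmetric and invertible by hypothesis. The function $L$ is affine in $y$.

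2. Complete the square with $w_*(y)=-H^{-1}L(y)$. This gives $P(y,w)=\tfrac12 (w-w_*)^TH(w-w_*)+\tilde P_0(y)$. The factor $e^{i\tau\tilde P_0(y)}$ has modulus one and can be pulled out.

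3. Pass to the Fourier side, as in the proof of Lemma~\ref{lem-Etau}. By Parseval, the integral of $e^{\frac{i\tau}{2}v^THv}$ against $g(v)=G(y,v+w_*)$ equals a constant times $\tau^{-d/2}|\det H|^{-1/2}\int e^{-\frac{i}{2\tau}\xi^TH^{-1}\xi}\widehat g(\xi)\,\dd\xi$. This yields the bound
\[
|I(y,\tau)|\le C_H\,\tau^{-d/2}\|\widehat{G(y,\cdot)}\|_{L^1}\le C\tau^{-d/2}\sup_y \|G(y,\cdot)\|_{W^{d+1,1}}.
\]

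Since $G$ has compact support in $y$, integrating this bound over $y$ gives $O(\tau^{-d/2})\to0$. This uses $d\ge1$. When $m=0$ there is no $y$ variable and the argument is the same.

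An alternative to step 3, avoiding the Fourier transform, is a non-stationary phase integration by parts away from $w_*$ together with a trivial bound on a ball of radius $\tau^{-1/2}$ around it. The Fourier route is cleaner.

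The only delicate point is uniformity in $y$. This is needed because $w_*(y)$ moves with $y$, but translation does not change the $L^1$ norm of the Fourier transform, so the bound is uniform. Measurability and Fubini are justified by the compact support of $G$. I expect no real obstacle; the main care is confirming that the Hessian in $w$ is independent of $y$, which holds because $P$ is quadratic.
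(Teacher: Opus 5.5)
Your proposal is correct and takes essentially the same route as the paper. You reduce by density using the uniform bound $|\int e^{i\tau P}F|\le\|F\|_{L^1}$, complete the square in $w$ with the constant invertible Hessian, and apply Parseval with the $\tau^{-d/2}$ modulus of the Fourier transform of the quadratic chirp, noting that the $y$-dependent translation leaves $|\widehat{G(y,\cdot)}|$ unchanged. The only cosmetic difference is the dense class: the paper uses finite sums of products $F_1(y)F_2(w)$ with $F_1\in L^1$ and $F_2\in\mathcal S$, whereas you use general $C_0^\infty$ functions and control $\|\widehat{G(y,\cdot)}\|_{L^1}$ uniformly in $y$ through a $W^{d+1,1}$ bound.
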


\begin{proof}
Since $P$ is quadratic, the Hessian $M$ of $w\mapsto P(y,w)$ is a constant
invertible symmetric matrix, and completing the square gives
\[
        P(y,w)=\tfrac12\bigl(w-w(y)\bigr)\cdot M\bigl(w-w(y)\bigr)+g(y)
\]
with $w(y)$ affine and $g$ real.  The Fourier transform of
$e^{i\tau w\cdot Mw/2}$ has modulus $C_M\tau^{-d/2}$.  Hence, by Parseval's
identity, for $F(y,w)=F_1(y)F_2(w)$ with $F_1\in L^1(\R^m)$ and
$F_2\in\mathcal S(\R^d)$,
\[
        \left|\int e^{i\tau P}F\right|
        \le \int|F_1(y)|
        \left|\int e^{i\tau w\cdot Mw/2}F_2(w+w(y))\,\dd w\right|\dd y
        \le C_M\tau^{-d/2}\|F_1\|_{L^1}\|\widehat{F_2}\|_{L^1},
\]
because translation does not change $|\widehat{F_2}|$.  Finite sums of such
products are dense in $L^1(\R^m\times\R^d)$, and
$|\int e^{i\tau P}F|\le\|F\|_{L^1}$ for every $\tau$.  The assertion follows.
\end{proof}

Lemma~\ref{lem-quadratic-RL} is applied to the residual phases of
Lemma~\ref{lem-signed-quadratic-normal-form} after a fixed affine change of
variables with constant Jacobian.  The passive variables $y$ consist of the
signed center $c$ (or a branch output) together with any integration variables
that do not occur in the phase, and $w$ are linear coordinates on the fiber
$\{c=\mathrm{const}\}$ of the variables that do occur.  The Hessian condition is
then the nondegeneracy statement of
Lemma~\ref{lem-signed-quadratic-normal-form}, and $d\ge1$ whenever the signed
list contains at least two variables.

\section{Oscillatory Cauchy estimates}\label{sec:oscillatory-cauchy}

We need two versions of the conjugated Cauchy-transform estimate used in Bukhgeim's construction; compare \cite{BlastenTzouWang2020}.  They follow from the same near--far decomposition and integration by parts, but have different roles.  The $L^\infty$ bound loses a logarithm, whereas the Hardy--Littlewood--Sobolev bound is log-free and later provides a uniform terminal gain of $\tau^{-1/2}$.

For a fixed center $z_0$ and parameter $\tau>0$, define
\begin{equation}
\pr_\psi^{-1}f=\pr^{-1}(e^{i\tau\psiz}f), \quad\bar\pr_\psi^{-1}f=\bar\pr^{-1}(e^{-i\tau\psiz}f),
\end{equation}
The dependence on $z_0$ and $\tau$ will usually be left implicit.

The first estimate maps $W^{1,b}$, $b>2$, into $L^\infty$.  It will provide the high-integrability endpoint in Lemma~\ref{lem-bukhgeim-interpolation}.

\subsection{The \texorpdfstring{$L^\infty$}{L-infinity} estimate}

\begin{lem}\label{lem-inf}
Let $2<b<\infty$ and let $Z\Subset\C$ be compact.  There is a constant $C=C_{X,Z,b}>0$ such that, uniformly for $z_0\in Z$ and $\tau\geq2$,
\begin{equation}
\|\pr_\psi^{-1}f\|_{L^\infty(X)}\leq  C\tau^{-\frac{1}{2}}\log(2+\tau) \|f\|_{W^{1,b}(X)},\quad \forall f\in W^{1,b}_0(X),
\end{equation}
where $f$ is viewed as extended by zero outside $X$.
\end{lem}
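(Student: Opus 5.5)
Fix $z\in X$ and write
\[
\pr_\psi^{-1}f(z)=\frac1\pi\int\frac{e^{i\tau\psi(z')}f(z')}{\bar z-\bar z'}\,\dd^2z'.
\]
The plan is the standard near--far splitting, with a cutoff adapted both to the evaluation point $z$ and to the stationary point $z_0$ of the phase. Choose a smooth cutoff $\chi_\delta$ equal to $1$ on $\{|w|\le\delta\}$ and supported in $\{|w|\le2\delta\}$, with $|\nabla\chi_\delta|\lesssim\delta^{-1}$. Split $f=f\chi_\delta(\cdot-z_0)+f\chi_\delta(\cdot-z)+f(1-\chi_\delta(\cdot-z_0)-\chi_\delta(\cdot-z))$. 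If $|z-z_0|<4\delta$, use a single cutoff around $z_0$ of radius $6\delta$. Take $\delta=\tau^{-1/2}$.

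\textbf{Near pieces.} Since $b>2$, Sobolev embedding gives $\|f\|_{L^\infty}\le C\|f\|_{W^{1,b}}$. The near-$z$ piece is then bounded by
\[
\|f\|_\infty\int_{|z'-z|\le2\delta}|z-z'|^{-1}\,\dd^2z'\lesssim\delta\|f\|_\infty.
\]
For the near-$z_0$ piece, bound $|z-z'|^{-1}$ in $L^{b'}$ on the disc, where $b'<2$, and $f$ in $L^b$ of the disc. Alternatively, use $\|f\|_\infty$ times $\int_{|z'-z_0|\le2\delta}|z-z'|^{-1}\lesssim\delta$, which holds uniformly in $z$ by rearrangement. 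Both near pieces are therefore $O(\tau^{-1/2})\|f\|_{W^{1,b}}$.

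\textbf{Far piece.} On the support of $g=f(1-\chi-\chi)$ we have $|z'-z_0|\ge\delta$. Using $\pr\psi=z'-z_0$, write $e^{i\tau\psi}=\frac{1}{i\tau(z'-z_0)}\pr e^{i\tau\psi}$ and integrate by parts in $z'$. This is legitimate because $g$ vanishes near $z$, so the kernel $(\bar z-\bar z')^{-1}$ is smooth there, and the boundary terms vanish since $f\in W^{1,b}_0(X)$. The result is
\[
\frac{1}{i\tau\pi}\int e^{i\tau\psi}\,\pr_{z'}\!\left[\frac{g(z')}{(z'-z_0)(\bar z-\bar z')}\right]\dd^2z'.
\]
Note that $\pr_{z'}$ of the antiholomorphic factor $(\bar z-\bar z')^{-1}$ is zero away from $z'=z$, which is a nice feature of pairing $\pr$ with $\pr^{-1}$. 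So the derivative falls only on $g$ and on $(z'-z_0)^{-1}$, producing three terms:
\[
\frac{\pr f}{(z'-z_0)(\bar z-\bar z')},\qquad
\frac{f\,\pr\chi}{(z'-z_0)(\bar z-\bar z')},\qquad
\frac{f}{(z'-z_0)^2(\bar z-\bar z')}.
\]

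\textbf{Bounding the three terms.} For the $\pr f$ term, apply Hölder with $\pr f\in L^b$. The function $|z'-z_0|^{-1}|z-z'|^{-1}\mathbf 1_{|z'-z_0|\ge\delta,\ |z'-z|\ge\delta}$ has $L^{b'}$ norm $\lesssim\delta^{-(2/b)}\cdot$const. Indeed, by splitting into the regions closer to $z_0$ and closer to $z$, it is bounded by $\bigl(\int_{|w|\ge\delta,\,|w|\le R}|w|^{-2b'}\bigr)^{1/b'}\sim\delta^{2/b'-2}=\delta^{-2/b}$, since $2b'>2$. Times $\tau^{-1}$ this gives $\tau^{-1+1/b}$, which is better than $\tau^{-1/2}$ since $b>2$. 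The $\pr\chi$ terms live on annuli of width $\delta$ and are bounded by $\tau^{-1}\delta^{-1}\|f\|_\infty\int_{\text{annulus}}|z'-z_0|^{-1}|z-z'|^{-1}\lesssim\tau^{-1}\delta^{-1}\cdot\delta=\tau^{-1}\ll\tau^{-1/2}$. The last term is bounded by $\tau^{-1}\|f\|_\infty\int_{\delta\le|z'-z_0|}|z'-z_0|^{-2}|z-z'|^{-1}$, with the region near $z$ excluded.

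\textbf{Main obstacle.} The last integral is the delicate one. Splitting $|z'-z_0|\gtrless|z-z_0|/2$ gives a bound of order $\delta^{-1}+|z-z_0|^{-1}\log(\cdot)$, and this is exactly where the logarithm must be tracked carefully. In the region $|z'-z|\ge|z'-z_0|$ the integral is $\int_\delta^R\rho^{-2}\,\dd\rho\sim\delta^{-1}$. In the region near $z$ it is at most $|z-z_0|^{-2}\int_{\delta\le|z'-z|\le|z-z_0|}|z-z'|^{-1}\lesssim|z-z_0|^{-1}\lesssim\delta^{-1}$. The remaining intermediate regions contribute at most logarithmic factors of the type $\int_\delta^R\rho^{-1}\,\dd\rho=\log(R/\delta)$. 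Altogether this gives $\tau^{-1}(\delta^{-1}+\delta^{-1}\log(R/\delta))\lesssim\tau^{-1/2}\log(2+\tau)$. All constants depend only on $X$, $Z$, and $b$, which yields the claim uniformly in $z_0\in Z$ and $\tau\ge2$.
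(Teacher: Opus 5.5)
Your proof is correct. It shares the paper's skeleton: a cutoff of radius $\delta=\tau^{-1/2}$ at the stationary point $z_0$, integration by parts through $\partial\psi_{z_0}=z-z_0$, and then a $\partial f$ term, a cutoff-derivative term and a square-denominator term.

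\textbf{How the two routes differ.} The real difference is how you deal with the singularity of the Cauchy kernel at the evaluation point. The paper uses a single cutoff at $z_0$ and integrates by parts at the operator level: $\partial^{-1}[(\partial e^{i\tau\psi})h]=e^{i\tau\psi}h-\partial_\psi^{-1}[\partial h]$, which is just $\partial^{-1}\partial=\mathrm{Id}$ on compactly supported $h$. This produces a local term $\frac{1}{i\tau}e^{i\tau\psi_{z_0}}\frac{1-\chi_\delta}{\partial\psi_{z_0}}f$, bounded by $\tau^{-1}\delta^{-1}\|f\|_{L^\infty}$. It needs neither a second cutoff nor a case distinction. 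The $\partial f$ term is then handled by $|(1-\chi_\delta)/\partial\psi_{z_0}|\le\delta^{-1}$ together with the local $L^b\to L^\infty$ Cauchy bound.

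You instead also excise a disc around $z$. Integration by parts then becomes classical, and antiholomorphy of $(\bar z-\bar z')^{-1}$ removes the kernel derivative. Your annulus term around $z$ is a smeared version of the paper's local term, since distributionally $\partial_{z'}(\bar z-\bar z')^{-1}=-\pi\delta_z$. The cost is the split into $|z-z_0|<4\delta$ and $|z-z_0|\ge 4\delta$ and an extra annulus. The gain is a fully pointwise argument and a sharper $\partial f$ term, $\tau^{-1+1/b}$ instead of $\tau^{-1/2}$.

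\textbf{Minor corrections, none affecting the conclusion.}

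(a) The annulus integrals $\int|z'-z_0|^{-1}|z-z'|^{-1}\,\dd^2z'$ are only $O(\delta/|z-z_0|)$. When $|z-z_0|\sim\delta$ this is $O(1)$, not $O(\delta)$. So the cutoff-derivative terms are $O(\tau^{-1}\delta^{-1})=O(\tau^{-1/2})$ rather than $O(\tau^{-1})$, which is still sufficient.

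(b) Your integration-by-parts formula is missing a minus sign; this is harmless.

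(c) In the near-$z_0$ piece, the H\"older option works only if you use $\|f\|_{L^b(B(z_0,2\delta))}\le C\delta^{2/b}\|f\|_{L^\infty}$. With the global $L^b$ norm you get only $\delta^{1-2/b}$. Keep the $L^\infty$ alternative.

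(d) Your own regions show that no logarithm actually arises in the square-denominator term. On $\{|z'-z|<|z'-z_0|,\ |z'-z|>|z-z_0|\}$ the integrand is at most $|z'-z|^{-3}$, so that region contributes $O(|z-z_0|^{-1})=O(\delta^{-1})$. Your stated bound $\delta^{-1}\log(R/\delta)$ is therefore a valid overestimate, and your argument in fact proves the lemma without the logarithm.
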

\begin{proof}
We regard $f$ as extended by zero outside $X$ and put
\begin{equation}
        R_0=1+\sup\{|z-z_0|:z\in\overline{X},\ z_0\in Z\}<\infty.
\end{equation}
Choose $\chi\in C_0^\infty(\C)$ with $0\leq \chi\leq1$, equal to one on $B(0,1)$ and to zero outside $B(0,2)$.  For $\delta>0$, let $\chi_\delta(z)=\chi(\delta^{-1}(z-z_0))$.

We split $\pr_\psi^{-1}f$ into
\begin{equation}
\pr_\psi^{-1} f=\pr_\psi^{-1}(\chi_\delta f)+\pr_\psi^{-1}\left((1-\chi_\delta)f\right).
\end{equation}
The near-center term has the kernel representation
\begin{equation}
\pr_\psi^{-1}(\chi_\delta f)(z)=\frac{1}{\pi}\int \frac{e^{i\tau\psiz(z')}}{\bar z-\bar z'}\chi(\delta^{-1}(z'-z_0))f(z')\dd^2 z',
\end{equation}
which is bounded by
\begin{equation}
\|\pr_\psi^{-1}(\chi_\delta f)\|_{L^\infty(X)}\leq C\|f\|_{L^\infty(X)}\int_0^{2\delta}\dd\rho=C\delta \|f\|_{W^{1,b}(X)}.
\end{equation}

For the term away from the center, integration by parts in the oscillation gives
\begin{multline}
\pr_\psi^{-1}\left((1-\chi_\delta)f\right)
=\frac{1}{i\tau}\pr^{-1}\left[(\pr e^{i\tau\psiz}) \frac{1-\chi_\delta}{\pr\psiz} f  \right]\\[5pt]
=\frac{1}{i\tau}e^{i\tau\psiz}\frac{1-\chi_\delta}{\pr\psiz} f
-\frac{1}{i\tau}\pr_\psi^{-1}\left[\pr\left(\frac{1-\chi_\delta}{\pr\psiz} f\right)\right].
\end{multline}
Since $\partial\psi_{z_0}=z-z_0$ and $\partial^2\psi_{z_0}=1$, the derivative in the last term is
\begin{equation}\label{product-rule-oscillatory-cauchy}
\partial\left(\frac{1-\chi_\delta}{\partial\psi_{z_0}}f\right)
=\frac{1-\chi_\delta}{\partial\psi_{z_0}}\partial f
-\frac{\partial\chi_\delta}{\partial\psi_{z_0}}f
-\frac{1-\chi_\delta}{(\partial\psi_{z_0})^2}f .
\end{equation}
We estimate the three terms in \eqref{product-rule-oscillatory-cauchy} separately.  The cutoff distance gives
\begin{equation}
\left\Vert \frac{1-\chi_\delta}{\pr\psiz} f\right\Vert_{L^\infty(X)}\leq\frac{\|f\|_{L^\infty(X)}}{\delta},
\end{equation}
and the local Cauchy estimate similarly gives
\begin{equation}
\left\Vert\pr_\psi^{-1}\left[\frac{1-\chi_\delta}{\pr\psiz} \pr f  \right]\right\Vert_{L^\infty(X)}\leq C\frac{\|\pr f\|_{L^b(X)}}{\delta}.
\end{equation}
The derivative of the cutoff is supported on the annulus $\delta\leq |z'-z_0|\leq2\delta$, where
\begin{multline}
\left|\pr_\psi^{-1}\left[\frac{\pr \chi_\delta}{\pr\psiz} f  \right](z)\right| \\
\leq C\delta^{-1}\|f\|_{L^\infty(X)}
\int_{\delta\leq|z'-z_0|\leq2\delta}
\frac{\dd^2z'}{|z-z'|\,|z'-z_0|}\\[5pt]
= C\delta^{-1}\|f\|_{L^\infty(X)}I(z-z_0),
\end{multline}
where
\begin{align}
I(z)
&=\int_{\delta\leq|z'|\leq2\delta}\frac{1}{|z-z'|}\frac{1}{|z'|}\dd^2z' \notag\\
&=\int_{\delta\leq|z'|\leq2\delta}\mathbf{1}_{B_{z}(\delta)}(z')\frac{1}{|z-z'|}\frac{1}{|z'|}\dd^2z' \notag\\
&\quad +\int_{\delta\leq|z'|\leq2\delta}\mathbf{1}_{B_{z}(\delta)^c}(z')\frac{1}{|z-z'|}\frac{1}{|z'|}\dd^2z'
=I_1(z)+I_2(z).
\end{align}
The portion of this annulus within distance $\delta$ of $z$ satisfies
\begin{equation}
I_1(z)\leq\int_{|z'-z|\leq \delta}\frac{1}{\delta|z'-z|}\dd^2 z'\leq C
\end{equation}
whereas its complement satisfies
\begin{equation}
I_2(z)\leq \int_{|z'|\leq2\delta}\frac{1}{\delta|z'|}\dd^2 z'\leq C.
\end{equation}
Therefore
\begin{equation}
\left|\pr_\psi^{-1}\left[\frac{\pr \chi_\delta}{\pr\psiz} f  \right](z)\right|\leq C\delta^{-1}\|f\|_{L^\infty(X)}.
\end{equation}

For the square-denominator term, the same decomposition gives
\begin{multline}
\left|\pr_\psi^{-1}\left[\frac{1- \chi_\delta}{(\pr\psiz)^2} f  \right](z)\right|\\[5pt]
\leq C\|f\|_{L^\infty(X)}\int_{R_0\geq|z'-z_0|\geq\delta}\frac{1}{|z-z'|}\frac{1}{|z'-z_0|^2}\dd^2z'\\[5pt]
=C\|f\|_{L^\infty(X)}J(z-z_0),
\end{multline}
where 
\begin{align}
J(z)
&=\int_{\delta\leq|z'|\leq R_0}\frac{1}{|z-z'|}\frac{1}{|z'|^{2}}\dd^2z' \notag\\
&=\int_{\delta\leq|z'|\leq R_0}\mathbf{1}_{B_{z}(\delta)}(z')\frac{1}{|z-z'|}\frac{1}{|z'|^{2}}\dd^2z' \notag\\
&\quad +\int_{\delta\leq|z'|\leq R_0}\mathbf{1}_{B_{z}(\delta)^c}(z')\frac{1}{|z-z'|}\frac{1}{|z'|^{2}}\dd^2z'
=J_1(z)+J_2(z).
\end{align}
Here
\begin{equation}
J_1(z)\leq\int_{|z'-z|\leq \delta}\frac{1}{\delta^2|z'-z|}\dd^2 z'\leq C\delta^{-1}
\end{equation}
and
\begin{equation}
J_2(z)\leq \int_{R_0\geq|z'|\geq\delta}\frac{1}{\delta|z'|^2}\dd^2 z'\leq C\delta^{-1}|\log\delta|.
\end{equation}

Combining these estimates,
\begin{equation}
\|\pr_\psi^{-1}f\|_{L^\infty(X)}\leq C\|f\|_{W^{1,b}(X)}\left(\delta+\tau^{-1}\delta^{-1} |\log\delta| \right).
\end{equation}
Choosing $\delta=\tau^{-1/2}$ gives the stated estimate.
\end{proof}

\subsection{The Hardy--Littlewood--Sobolev target estimate}

The second estimate has the Hardy--Littlewood--Sobolev target exponent and will give the $L^{r}$ bounds for the CGO remainders.
\begin{lem}\label{lem-qstar}
Let $1<a<2$, put $a^*=2a/(2-a)$, and let $Z\Subset\C$ be compact.  There is a constant $C=C_{X,Z,a}>0$ such that, uniformly for $z_0\in Z$ and $\tau\geq2$,
\begin{equation}
\|\pr_\psi^{-1}f\|_{L^{a^*}(X)}\leq C \tau^{-1/2}\|f\|_{W^{1,a}(X)}, \quad\forall f\in W^{1,a}_0(X).
\end{equation}
\end{lem}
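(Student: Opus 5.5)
The plan is to rerun the near--far decomposition from the proof of Lemma~\ref{lem-inf}, measuring every piece in $L^{a^*}$ through the Hardy--Littlewood--Sobolev bound \eqref{cauchy-HLS-bound} instead of in $L^\infty$. Since $|e^{i\tau\psi_{z_0}}|=1$, \eqref{cauchy-HLS-bound} applies to $\pr_\psi^{-1}$ with the same constant: $\|\pr_\psi^{-1}g\|_{L^{a^*}(X)}\le C\|g\|_{L^a}$ for $g$ supported in a fixed bounded set. We will also use the Sobolev embedding $W^{1,a}_0(X)\hookrightarrow L^{a^*}$, which gives $\|f\|_{L^{a^*}}\le C\|f\|_{W^{1,a}(X)}$. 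By density we may assume $f\in C_0^\infty(X)$, so the integration by parts is justified; all constants will be uniform in $z_0\in Z$.

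\emph{Near term.} Take the same cutoff $\chi_\delta$ around $z_0$. By H\"older on $B(z_0,2\delta)$, using $\frac1a-\frac1{a^*}=\frac12$, we get $\|\chi_\delta f\|_{L^a}\le C\delta\,\|f\|_{L^{a^*}}$. Hence
\[
\|\pr_\psi^{-1}(\chi_\delta f)\|_{L^{a^*}(X)}\le C\delta\|f\|_{W^{1,a}(X)}.
\]
The factor $\delta$ comes from the area of the ball, and this replaces the $L^\infty$ bound used previously.

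\emph{Far term.} Integrating by parts as in Lemma~\ref{lem-inf} produces a boundary term and the three terms of \eqref{product-rule-oscillatory-cauchy}, all with a prefactor $\tau^{-1}$.
\begin{itemize}
\item The boundary term $\frac{1-\chi_\delta}{z-z_0}f$ has $L^{a^*}$ norm at most $\delta^{-1}\|f\|_{L^{a^*}}$.
\item The term with $\frac{1-\chi_\delta}{z-z_0}\pr f$ has $L^a$ norm at most $\delta^{-1}\|\pr f\|_{L^a}$, so HLS gives an $L^{a^*}$ bound of the same size.
\item The cutoff-derivative term satisfies $|\pr\chi_\delta|/|z-z_0|\le C\delta^{-2}$ on the annulus $\delta\le|z-z_0|\le2\delta$. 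Applying H\"older on that annulus as in the near term, its $L^a$ norm is at most $C\delta^{-2}\cdot\delta\|f\|_{L^{a^*}}=C\delta^{-1}\|f\|_{L^{a^*}}$.
\item For the square-denominator term, H\"older with $\frac1a=\frac1{a^*}+\frac12$ bounds its $L^a$ norm by $\|f\|_{L^{a^*}}$ times the $L^2$ norm of $|z-z_0|^{-2}\mathbf 1_{\{|z-z_0|\ge\delta\}}$. That norm is $\bigl(2\pi\int_\delta^\infty\rho^{-3}\dd\rho\bigr)^{1/2}=C\delta^{-1}$.
\end{itemize}
In each case we then apply HLS to pass to $L^{a^*}$.

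The one point needing care is the square-denominator term, since this is where Lemma~\ref{lem-inf} lost a logarithm. Here no logarithm appears, because $|z|^{-4}$ is integrable at infinity in the plane; this is why we take the $L^2$ norm of the weight rather than an $L^1$-type norm. Collecting the bounds gives
\[
\|\pr_\psi^{-1}f\|_{L^{a^*}(X)}\le C\bigl(\delta+\tau^{-1}\delta^{-1}\bigr)\|f\|_{W^{1,a}(X)},
\]
and choosing $\delta=\tau^{-1/2}$ yields the claimed bound $C\tau^{-1/2}\|f\|_{W^{1,a}(X)}$.
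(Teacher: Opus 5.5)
Your proof is correct and follows the paper's argument. It uses the same near--far split around $z_0$ and the same integration by parts, which produces the boundary term and the three terms of \eqref{product-rule-oscillatory-cauchy}. It uses H\"older with $\|\chi_\delta\|_{L^2}\le C\delta$ followed by Hardy--Littlewood--Sobolev for the near term, and makes the same final choice $\delta=\tau^{-1/2}$. The one difference is in the cutoff-derivative and square-denominator terms. You apply H\"older (via $\frac1a=\frac1{a^*}+\frac12$ and the $L^2$ norm of the weight) and then the Hardy--Littlewood--Sobolev bound. The paper instead estimates these kernels pointwise through the splittings $I_1,I_2$ and $J_1,J_2$, using Young's inequality and the generalized Minkowski inequality. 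Your route reaches the same $\delta^{-1}$ bounds more directly.
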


\begin{proof}
Put
\begin{equation}
        R_0=1+\sup\{|z-z_0|:z\in\overline{X},\ z_0\in Z\}<\infty.
\end{equation}
With the notation of the preceding proof, the Hardy--Littlewood--Sobolev estimate gives
\begin{multline}
\|\partial_\psi^{-1}(\chi_\delta f)\|_{L^{a^*}(X)}\leq C\|\chi_\delta f\|_{L^a(X)}\\[5pt]
\leq C\|f\|_{L^{a^*}(X)}\|\chi_\delta\|_{L^2(\C)}= C\delta \|f\|_{L^{a^*}(X)}.
\end{multline}
Away from the center, the coefficient without a derivative obeys
\begin{equation}
\left\Vert \frac{1-\chi_\delta}{\pr\psiz} f\right\Vert_{L^{a^*}(X)}\leq\frac{\|f\|_{L^{a^*}(X)}}{\delta},
\end{equation}
and the term containing $\partial f$ satisfies
\begin{equation}
\left\Vert\pr_\psi^{-1}\left[\frac{1-\chi_\delta}{\pr\psiz} \pr f  \right]\right\Vert_{L^{a^*}(X)}\leq C\frac{\|\pr f\|_{L^a(X)}}{\delta}.
\end{equation}

For the cutoff-derivative term, supported on $\delta\le |z'-z_0|\le2\delta$,
\begin{multline}
\left|\pr_\psi^{-1}\left[\frac{\pr \chi_\delta}{\pr\psiz} f  \right](z)\right|
\leq C\delta^{-1}\int_{\delta\leq|z'-z_0|\leq2\delta}\frac{1}{|z-z'|}\frac{1}{|z'-z_0|}|f(z')|\dd^2z'\\[5pt]
=C\delta^{-1}\left( I_1(z-z_0)+I_2(z-z_0)\right),
\end{multline}
where
\begin{equation}
I_1(z)=\int_{\delta\leq|z'|\leq2\delta}\mathbf{1}_{B_{z}(\delta)}(z')\frac{1}{|z-z'|\,|z'|}|f(z'+z_0)|\dd^2 z'
\end{equation}
and
\begin{equation}
I_2(z)=\int_{\delta\leq|z'|\leq2\delta}\mathbf{1}_{B_{z}(\delta)^c}(z')\frac{1}{|z-z'|\,|z'|}|f(z'+z_0)|\dd^2 z'.
\end{equation}
Writing $1/\kappa=1-1/a^*$, we have
\begin{multline}
I_1(z)\leq\delta^{-1}\mathbf{1}_{B_0(5\delta)}(z)\int_{|z-z'|\leq\delta}\frac{1}{|z-z'|}|f(z'+z_0)|\dd^2 z'\\[5pt]
\leq \delta^{-1}\mathbf{1}_{B_0(5\delta)}(z)\|f\|_{L^{a^*}(X)}\left(\int_{|z-z'|\leq\delta}\frac{1}{|z-z'|^\kappa}\dd^2 z'\right)^{\frac{1}{\kappa}}\\[5pt]
\leq C \delta^{-\frac{2}{a^*}}\mathbf{1}_{B_0(5\delta)}(z)\|f\|_{L^{a^*}(X)},
\end{multline}
so
\begin{equation}
\|I_1\|_{L^{a^*}(X)}\leq C\|f\|_{L^{a^*}(X)}.
\end{equation}
The generalized Minkowski inequality gives
\begin{multline}
\|I_2\|_{L^{a^*}(X)}\leq C\int_{\delta\leq|z'|\leq2\delta}\left(\int_{\delta}^\infty \frac{1}{\rho^{a^*-1}}\dd\rho\right)^{\frac{1}{a^*}}\frac{1}{|z'|}|f(z'+z_0)|\dd^2 z'\\[5pt]
\leq C\delta^{\frac{2}{a^*}-1}\left(\int_\delta^{2\delta} \frac{1}{\rho^{\kappa-1}}\dd\rho\right)^{\frac{1}{\kappa}} \|f\|_{L^{a^*}(X)}
\leq C \|f\|_{L^{a^*}(X)}.
\end{multline}
It follows that
\begin{equation}
\left\Vert\pr_\psi^{-1}\left[\frac{\pr \chi_\delta}{\pr\psiz} f  \right]\right\Vert_{L^{a^*}(X)}\leq C\delta^{-1}\|f\|_{L^{a^*}(X)}.
\end{equation}

Split the square-denominator term in the same way:
\begin{multline}
\left|\pr_\psi^{-1}\left[\frac{1- \chi_\delta}{(\pr\psiz)^2} f  \right](z)\right| \\
\leq C
\int_{R_0\geq|z'-z_0|\geq\delta}
\frac{|f(z')|}{|z-z'|\,|z'-z_0|^2}\dd^2z'\\[5pt]
=C\left(J_1(z-z_0)+J_2(z-z_0)\right),
\end{multline}
where
\begin{equation}
J_1(z)=\int_{R_0\geq|z'|\geq\delta}\mathbf{1}_{B_z(\delta)}(z')\frac{1}{|z-z'|}\frac{1}{|z'|^2}|f(z'+z_0)|\dd^2z'
\end{equation}
and
\begin{equation}
J_2(z)=\int_{R_0\geq|z'|\geq\delta}\mathbf{1}_{B_z(\delta)^c}(z')\frac{1}{|z-z'|}\frac{1}{|z'|^2}|f(z'+z_0)|\dd^2z'.
\end{equation}
For $J_1$, introduce
\begin{equation}
        h_\delta(u)=\frac{\mathbf 1_{|u|\leq\delta}}{|u|},
        \qquad
        g_\delta(v)=\mathbf 1_{\delta\leq |v|\leq R_0}|v|^{-2}|f(v+z_0)| .
\end{equation}
Then $J_1\leq h_\delta*g_\delta$, and hence
\begin{equation}
        \|J_1\|_{L^{a^*}(X)}
        \leq \|h_\delta\|_{L^1}\|g_\delta\|_{L^{a^*}}
        \leq C\delta\cdot\delta^{-2}\|f\|_{L^{a^*}(X)}
        =C\delta^{-1}\|f\|_{L^{a^*}(X)} .
\end{equation}
By the generalized Minkowski inequality
\begin{multline}
\|J_2\|_{L^{a^*}(X)}\leq C\int_{\delta\leq|z'|\leq R_0}\left(\int_\delta^\infty\frac{1}{\rho^{a^*-1}}\dd\rho\right)^{\frac{1}{a^*}}\frac{1}{|z'|^2}|f(z'+z_0)|\dd^2 z'\\[5pt]
\leq C\delta^{\frac{2}{a^*}-1}\left(\int_\delta^{R_0}\frac{1}{\rho^{2\kappa-1}}\dd\rho\right)^{\frac{1}{\kappa}}\|f\|_{L^{a^*}(X)}\\[5pt]
\leq C\delta^{\frac{2}{a^*}-1}\delta^{\frac{2}{\kappa}-2}\|f\|_{L^{a^*}(X)}
=C\delta^{-1}\|f\|_{L^{a^*}(X)}.
\end{multline}

Altogether,
\begin{equation}
\|\pr_\psi^{-1}f\|_{L^{a^*}(X)}\leq C\|f\|_{W^{1,a}(X)}\left(\delta+\tau^{-1}\delta^{-1} \right).
\end{equation}
Choosing $\delta=\tau^{-1/2}$ gives the stated estimate.
\end{proof}

\begin{rem}\label{rem-cauchy-involution}
The estimates of Lemmas~\ref{lem-inf} and~\ref{lem-qstar} remain valid, with the same constants, after replacing $\partial$ by $\bar\partial$ and/or replacing $\psi$ by $-\psi$.  This follows either from the same proof or from conjugation together with reversal of the phase sign.  These changes will be used without further comment for the tilded CGO construction and for the finite Born branches.
\end{rem}

\subsection{Uniform terminal gains}

The product-error estimate will leave a smooth cutoff at the end of each Born branch. We record the corresponding Cauchy gain uniformly for centers in a fixed compact set. This is an application of Lemma~\ref{lem-qstar}, with the target exponent chosen for the later root pairing.
\begin{lem}\label{lem-terminal-cauchy-gain}
Let $2<\kappa<\infty$, and define
\begin{equation}\label{qs-terminal-def}
        \frac1{a_\kappa}=\frac12+\frac1\kappa,
        \qquad\text{equivalently}\qquad
        a_\kappa=\frac{2\kappa}{\kappa+2}.
\end{equation}
For $\varepsilon\in\{\pm1\}$ and
$\mathfrak d\in\{\partial,\bar\partial\}$, set
\begin{align}
\mathcal C_{\tau,z_0}^{\varepsilon,\partial}f(x)
 &=\frac1\pi\int
        \frac{e^{\varepsilon i\tau\psi_{z_0}(y)}f(y)}
             {\bar x-\bar y}\,\dd^2y,\label{terminal-cauchy-partial}\\
\mathcal C_{\tau,z_0}^{\varepsilon,\bar\partial}f(x)
 &=\frac1\pi\int
        \frac{e^{\varepsilon i\tau\psi_{z_0}(y)}f(y)}
             {x-y}\,\dd^2y .\label{terminal-cauchy-barpartial}
\end{align}
Let $X_0\Subset\C$ and $Z_0\Subset\C$ be fixed.  If
$f\in C_0^\infty(\C)$ is supported in a fixed compact set, then
\begin{equation}\label{terminal-cauchy-gain}
        \|\mathcal C_{\tau,z_0}^{\varepsilon,\mathfrak d}f\|_{L^\kappa(X_0)}
        \le C_{\kappa,X_0,Z_0}\tau^{-1/2}\|f\|_{W^{1,a_\kappa}(\C)}
\end{equation}
uniformly for $z_0\in Z_0$, $\tau\ge2$, $\varepsilon$, and $\mathfrak d$.  The constant
may depend on $X_0$, $Z_0$, and the fixed support of $f$.
\end{lem}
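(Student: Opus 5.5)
The plan is to reduce the statement directly to Lemma~\ref{lem-qstar} with the exponent $a=a_\kappa$. Since $2<\kappa<\infty$, we have $1/a_\kappa=1/2+1/\kappa\in(1/2,1)$, hence $1<a_\kappa<2$. The Hardy--Littlewood--Sobolev exponent of $a_\kappa$ is then
\[
        \frac{1}{a_\kappa^*}=\frac1{a_\kappa}-\frac12=\frac1\kappa ,
\]
so $a_\kappa^*=\kappa$. This is exactly the choice that makes the target space in Lemma~\ref{lem-qstar} equal to $L^\kappa$.

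First, fix the geometry. Choose a bounded open set $X$ that contains $X_0$ together with a neighbourhood of the fixed compact support of $f$. Then $f\in C_0^\infty(X)\subset W^{1,a_\kappa}_0(X)$, and $\|f\|_{W^{1,a_\kappa}(X)}=\|f\|_{W^{1,a_\kappa}(\C)}$. Apply Lemma~\ref{lem-qstar} with this $X$, with $Z=\overline{Z_0}$, and with $a=a_\kappa$. Its constant depends only on $X$, $Z$ and $a$, and $X$ is determined by $X_0$ and the support, which gives the claimed dependence.

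Second, identify the operators. For $\mathfrak d=\partial$ and $\varepsilon=+1$, formula~\eqref{terminal-cauchy-partial} is literally $\partial^{-1}(e^{i\tau\psi_{z_0}}f)=\partial_\psi^{-1}f$. The three remaining cases are the variants with $\partial$ replaced by $\bar\partial$ and/or $\psi$ replaced by $-\psi$; Remark~\ref{rem-cauchy-involution} states that these hold with the same constants.

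This yields the bound $\|\mathcal C^{\varepsilon,\mathfrak d}_{\tau,z_0}f\|_{L^\kappa(X)}\le C\tau^{-1/2}\|f\|_{W^{1,a_\kappa}(X)}$. Restricting from $X$ to $X_0\subset X$ only decreases the left-hand side, which gives \eqref{terminal-cauchy-gain}. Uniformity in $z_0\in Z_0$, $\tau\ge2$ and the four sign/derivative choices is inherited directly from Lemma~\ref{lem-qstar}.

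I expect no real obstacle here. The only point that needs care is checking that Remark~\ref{rem-cauchy-involution} legitimately covers the $\bar\partial$ and $-\psi$ cases. If one wants this spelled out, note that complex conjugation interchanges $\partial^{-1}$ and $\bar\partial^{-1}$ and reverses the sign of the phase, while preserving both the $L^\kappa$ norm and the $W^{1,a_\kappa}$ norm. Applying Lemma~\ref{lem-qstar} to $\bar f$ therefore handles the conjugated cases. For the phase sign alone, the proof of Lemma~\ref{lem-qstar} goes through verbatim with $\tau$ replaced by $-\tau$ in the integration by parts, since only $|\partial\psi_{z_0}|=|z-z_0|$ enters the estimates.
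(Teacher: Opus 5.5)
Your proposal is correct and matches the paper's proof essentially verbatim: you choose a bounded open set $X$ containing $X_0$ and the support of $f$, apply Lemma~\ref{lem-qstar} with $a=a_\kappa$ so that $a_\kappa^*=\kappa$, cover the other orientations and phase signs by Remark~\ref{rem-cauchy-involution}, and restrict from $X$ to $X_0$. The paper additionally places $Z_0$ inside $X$, but Lemma~\ref{lem-qstar} does not require this, and your conjugation argument for the $\bar\partial$ and $-\psi$ cases is a correct elaboration of the remark.
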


\begin{proof}
Choose a bounded open set $X$ containing $X_0$, $Z_0$, and the fixed support of
$f$, with $\operatorname{supp}f\Subset X$.  Since $a_\kappa^*=\kappa$, Lemma~\ref{lem-qstar}, applied with $a=a_\kappa$ and with the
center $z_0\in Z_0\subset X$, gives the asserted estimate on $X$, hence on
$X_0$, for the orientation $\partial$ and the sign $+$.  The other Cauchy
orientation and the opposite sign are covered by Remark~\ref{rem-cauchy-involution}.  The constants are uniform for $z_0\in Z_0$ because the
ambient set $X$ is fixed.
\end{proof}

\section{The CGO solutions}\label{sec:cgo-construction}

We now construct the two Bukhgeim solutions for $1<p<2$. The constants are uniform for $z_0\in\overline\Omega$ and large $\tau$; dependence on the values of $A$ and $\widetilde A$ at the center is displayed explicitly.

\subsection{Construction and interpolated operator bounds}

We begin with $V$ and obtain a Neumann expansion whose terms are controlled in both $L^\infty$ and $L^r$.  Recall that $q=V/4$, the normalization corresponding to $\Delta=4\partial\bar\partial$, and that $A=\bar\partial^{-1}q$.  By \eqref{global-cauchy-primitives} and Lemma~\ref{lem-cauchy-basic},
\begin{equation}
        A\in W^{1,p}_{\mathrm{loc}}(\C)\cap L^{r}(\C),
        \qquad r=\frac{2p}{2-p}.
\end{equation}
For a.e. $z_0\in\Omega$ the value $A(z_0)$ is defined as a Lebesgue value.  At such a center we subtract this value and put
\begin{equation}
        b_{z_0}(z)=A(z)-A(z_0).
\end{equation}
Then $b_{z_0}\in W^{1,p}_{\mathrm{loc}}(\C)\cap L^{r}_{\mathrm{loc}}(\C)$ and $\bar\partial b_{z_0}=q$.

Let $\chi\in C_0^\infty(X)$ satisfy $\chi=1$ in a neighbourhood of $\overline\Omega$.  The initial correction and the operator which inserts one more potential are
\begin{equation}\label{B-S-def-general}
        B_{z_0,\tau}=\partial_\psi^{-1}(\chi b_{z_0}),
        \qquad
        S^q_{z_0,\tau}f=\partial_\psi^{-1}\bigl(\chi\bar\partial_\psi^{-1}(qf)\bigr).
\end{equation}
If the Neumann equation
\begin{equation}
        W_{z_0,\tau}=B_{z_0,\tau}+S^q_{z_0,\tau}W_{z_0,\tau},
        \qquad
        \zeta_{z_0,\tau}=e^{-i\tau\psi_{z_0}}W_{z_0,\tau},
\end{equation}
is solvable in $L^\infty(X)$, then its solution satisfies
\begin{equation}\label{conjugated-remainder-equation}
        \bar\partial\left(e^{-i\tau\psi_{z_0}}\partial(e^{i\tau\psi_{z_0}}\zeta_{z_0,\tau})\right)
        =q(1+\zeta_{z_0,\tau})
\end{equation}
in $\Omega$, and consequently
\begin{equation}\label{cgo-u-def}
        u_{z_0,\tau}=e^{i\tau\Phi_{z_0}}(1+\zeta_{z_0,\tau})
\end{equation}
is a weak solution of $(-\Delta+V)u=0$ in $\Omega$.

\begin{lem}\label{lem-bukhgeim-interpolation}
Let $1<p<2$, let $q\in L^p(\Omega)$ be extended by zero to $X$, and define
\begin{equation}
        S^q_{z_0,\tau}f=\partial_\psi^{-1}\bigl(\chi\bar\partial_\psi^{-1}(qf)\bigr).
\end{equation}
For every $\theta_0>0$ there are constants $C_{\theta_0}$ and $\tau_0$, independent of $z_0\in\overline\Omega$ and $\tau$, such that for $\tau\geq \tau_0$,
\begin{equation}\label{S-Linf-interpolated}
        \|S^q_{z_0,\tau}f\|_{L^\infty(X)}
        \leq C_{\theta_0} \tau^{-(1-1/p)+\theta_0}
        \|q\|_{L^p(\Omega)}\|f\|_{L^\infty(X)} .
\end{equation}
Moreover,
\begin{equation}\label{S-Lpstar-from-Linf}
        \|S^q_{z_0,\tau}f\|_{L^{r}(X)}
        \leq C \tau^{-1/2}
        \|q\|_{L^p(\Omega)}\|f\|_{L^\infty(X)} .
\end{equation}
Finally, if $A=\bar\partial^{-1}q$ and $z_0$ is a Lebesgue point of $A$, then for
\begin{equation}
        B_{z_0,\tau}=\partial_\psi^{-1}\bigl(\chi(A-A(z_0))\bigr)
\end{equation}
one has
\begin{align}
        \|B_{z_0,\tau}\|_{L^{r}(X)}
        &\leq C \tau^{-1/2}
        \bigl(\|q\|_{L^p(\Omega)}+|A(z_0)|\bigr),       \label{B-Lpstar-interpolated}\\
        \|B_{z_0,\tau}\|_{L^\infty(X)}
        &\leq C_{\theta_0} \tau^{-(1-1/p)+\theta_0}
        \bigl(\|q\|_{L^p(\Omega)}+|A(z_0)|\bigr).       \label{B-Linf-interpolated}
\end{align}
\end{lem}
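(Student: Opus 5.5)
The proof is an interpolation argument with two endpoints. One endpoint carries the full $\tau^{-1/2}$ oscillatory gain but needs a potential of regularity above $L^2$. The other handles $L^p$ potentials with no gain at all. The exponent $1-1/p$ comes from interpolating in the potential variable.

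\emph{The $L^r$ bound \eqref{S-Lpstar-from-Linf}.} We apply Lemma~\ref{lem-qstar} with $a=p$, so that $a^*=r$, to the function $g=\chi\bar\partial_\psi^{-1}(qf)$. This function lies in $W^{1,p}_0(X)$. Its $\bar\partial$-derivative is $\bar\partial\chi\cdot(\ldots)+\chi e^{-i\tau\psi}qf$. Its $\partial$-derivative is controlled by the Beurling transform. Lemma~\ref{lem-cauchy-basic} therefore gives $\|g\|_{W^{1,p}}\le C\|qf\|_{L^p}\le C\|q\|_{L^p}\|f\|_{L^\infty}$, and Lemma~\ref{lem-qstar} yields $\tau^{-1/2}$. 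The phase inside the inner Cauchy transform is unimodular, so it does not affect the $L^p$ bounds.

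The same argument gives \eqref{B-Lpstar-interpolated}. Here $g=\chi(A-A(z_0))$, and $\|g\|_{W^{1,p}(X)}\le C(\|q\|_{L^p}+|A(z_0)|)$ by Lemma~\ref{lem-cauchy-basic} together with \eqref{global-cauchy-primitives}. No oscillation is used in the inner step.

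\emph{The $L^\infty$ bounds.} We view $q\mapsto S^q f$, with $f\in L^\infty$ fixed, as a linear operator in $q$ and estimate it at two endpoints.

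First endpoint, $q\in L^{b}$ with $b>2$. As above, $\|\chi\bar\partial_\psi^{-1}(qf)\|_{W^{1,b}}\le C\|q\|_{L^b}\|f\|_\infty$ by \eqref{cauchy-local-W1s-all}, and Lemma~\ref{lem-inf} gives $\|S^qf\|_\infty\le C\tau^{-1/2}\log(2+\tau)\|q\|_{L^b}\|f\|_\infty$.

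Second endpoint, $q\in L^{s}$ with $s>1$ close to $1$, and no oscillatory gain. Lemma~\ref{lem-localized-kernel-mapping} bounds the double Cauchy transform pointwise by $K_R*|qf|$. Since $K_R\in L^{s'}$, this gives $\|S^qf\|_\infty\le C\|q\|_{L^s}\|f\|_\infty$, uniformly in $\tau$.

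Riesz--Thorin, or real interpolation in $q$ with bilinear dependence on $f$ handled by fixing $f$, then gives a bound at $L^p$ with $1/p=(1-t)/s+t/b$. The exponent is $\tau^{-t/2}\log^t(2+\tau)$. Letting $s\downarrow1$ and $b\uparrow\infty$ makes $t/2\to1-1/p$. For any $\theta_0>0$ we choose $s,b$ with $t/2>1-1/p-\theta_0/2$ and absorb the logarithm into $\tau^{\theta_0/2}$. Since $\Omega$ is bounded, the norm $\|q\|_{L^p}$ dominates the correct $L^p$ norm on the support, and this proves \eqref{S-Linf-interpolated}.

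For \eqref{B-Linf-interpolated}, we write
\[
B_{z_0,\tau}=\partial_\psi^{-1}(\chi A)-A(z_0)\,\partial_\psi^{-1}\chi .
\]
The second piece is $O(\tau^{-1/2}\log\tau)$ by Lemma~\ref{lem-inf}. The first piece is linear in $q$ through $\chi\bar\partial^{-1}q$, and we interpolate between the same two endpoints. At $L^b$, Lemma~\ref{lem-inf} applies to $\chi A\in W^{1,b}$. At $L^s$, we use $|\partial^{-1}(\chi\bar\partial^{-1}q)|\le K_R*|q|\le C\|q\|_{L^s}$.

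\emph{Main obstacle.} The delicate step is making the interpolation rigorous uniformly in $z_0$ and $\tau$. The endpoint operators are defined on different spaces, and the $W^{1,b}$ endpoint needs care with the cutoff $\chi$: $\bar\partial^{-1}(qf)$ is not compactly supported, so $\chi$ must be inside, which the definition already arranges. One must also check that the constants of Lemma~\ref{lem-inf} are uniform for $z_0\in\overline\Omega$, which holds with $Z=\overline\Omega$. We expect Riesz--Thorin for the linear map $q\mapsto S^qf$, with complex-valued $q$, to suffice.
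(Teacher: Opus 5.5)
Your overall scheme is the paper's. The $L^r$ bounds come from Lemma~\ref{lem-qstar} with $a=p$. The $L^\infty$ bounds come from interpolating in the coefficient $q$ between two endpoints: a gain-free $L^s$ endpoint with $s$ near $1$, and the $\tau^{-1/2}\log(2+\tau)$ endpoint at $L^b$, $b>2$, from Lemma~\ref{lem-inf}. Your $K_R$ majorant for the low endpoint, and Riesz--Thorin with $f$ fixed, are harmless variants of the paper's Sobolev-embedding endpoint and its complex interpolation into $\mathcal L(L^\infty(X))$.

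However, your choice of limiting exponents is wrong, and as written the argument does not reach the stated power of $\tau$. From $1/p=(1-t)/s+t/b$ one gets
\[
 t=\frac{1/s-1/p}{1/s-1/b}.
\]
Letting $s\downarrow1$ and $b\uparrow\infty$ gives $t\to1-1/p$. The decay exponent is then $t/2\to\tfrac12(1-1/p)$, only half of what \eqref{S-Linf-interpolated} and \eqref{B-Linf-interpolated} assert. The claim that this choice makes $t/2\to1-1/p$ is false.

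The reason is that the gain at the good endpoint is $\tau^{-1/2}$ for every $b>2$. Raising $b$ therefore only moves that endpoint away from $p$ and shrinks the interpolation weight $t$ placed on it. You should take $b\downarrow2$ together with $s\downarrow1$, which is allowed since Lemma~\ref{lem-inf} holds for every $b>2$. Then $t\to2(1-1/p)$, i.e.\ $t/2\to1-1/p$. The constants of Lemma~\ref{lem-inf} and $\|K_R\|_{L^{s'}}$ blow up in this limit, but for fixed $\theta_0$ you fix $s$ and $b$, so these constants go into $C_{\theta_0}$. The same correction is needed in your interpolation for $q\mapsto\partial_\psi^{-1}(\chi\bar\partial^{-1}q)$ in the bound on $B_{z_0,\tau}$.

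A smaller point: you raise the fact that the endpoint operators live on different spaces but never resolve it. The fix is to state explicitly that the interpolated operator is the nested Cauchy integral itself. Your pointwise bound $|S^qf|\le C\,K_R*|qf|$, with $K_R\in L^{p'}$, shows that $q\mapsto S^qf$ is continuous from $L^p(\Omega)$ to $L^\infty(X)$. That continuity, plus density of simple functions, identifies the Riesz--Thorin extension with $S^q$ for $q\in L^p\setminus L^b$; the paper records this step explicitly. With these corrections your proof coincides with the paper's.
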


\begin{proof}
Apply Lemmas~\ref{lem-inf} and~\ref{lem-qstar} with $Z=\overline\Omega$.  To prove \eqref{S-Linf-interpolated}, first suppose that $q\in L^a(X)\cap L^b(X)$, where $1<a<p<b$ and $b>2$.  Lemmas~\ref{lem-inf} and~\ref{lem-cauchy-basic}, together with the $L^b$ boundedness of the Beurling transform, give
\begin{equation}\label{S-high-r}
        \|S^q_{z_0,\tau}f\|_{L^\infty(X)}
        \leq C_b \tau^{-1/2}\log(2+\tau)
        \|q\|_{L^b(\Omega)}\|f\|_{L^\infty(X)} .
\end{equation}
For $1<a<p$, the crude estimate has no decay but maps to the same target space.  Indeed, $a^*=2a/(2-a)>2$ because $a>1$, so local Sobolev embedding gives $W^{1,a^*}(X)\hookrightarrow L^\infty(X)$, and hence
\begin{align}
        \|S^q_{z_0,\tau}f\|_{L^\infty(X)}
        &\leq C_a\|S^q_{z_0,\tau}f\|_{W^{1,a^*}(X)}                                    \notag\\
        &\leq C_a
        \|\chi\bar\partial_\psi^{-1}(qf)\|_{L^{a^*}(X)}                                  \notag\\
        &\leq C_a
        \|\chi\bar\partial_\psi^{-1}(qf)\|_{W^{1,a}(X)}                                  \notag\\
        &\leq C_a
        \|q\|_{L^a(\Omega)}\|f\|_{L^\infty(X)} .                         \label{S-low-q}
\end{align}
For fixed $z_0,\tau$, the map $q\mapsto S^q_{z_0,\tau}$ is linear into
$\mathcal L(L^\infty(X))$, the Banach space of bounded operators on
$L^\infty(X)$.  Apply complex interpolation to the domain couple
$(L^a(X),L^b(X))$, keeping this target space fixed.  Interpolating
\eqref{S-low-q} and \eqref{S-high-r}, with
\(p^{-1}=(1-\theta)a^{-1}+\theta b^{-1}\), gives
\begin{equation}
        \|S^q_{z_0,\tau}\|_{L^\infty\to L^\infty}
        \leq C \tau^{-\theta/2}\log(2+\tau)^\theta
        \|q\|_{L^p(\Omega)} .
\end{equation}
Letting $a\downarrow1$ and $b\downarrow2$ makes $\theta/2$ arbitrarily close to $1-1/p$.  Absorbing the logarithm into an arbitrarily small loss in the power of $\tau$ gives \eqref{S-Linf-interpolated}.

It remains to identify this interpolation extension with the nested Cauchy formula.  For $q_1,q_2\in L^p(\Omega)$, the absolute kernels on the fixed supports give
\begin{equation}\label{S-coefficient-continuity}
 \bigl|S^{q_1-q_2}_{z_0,\tau}f(x)\bigr|
 \leq C K_R*\bigl(|q_1-q_2|\,|f|\bigr)(x),
\end{equation}
and hence, by $K_R\in L^{p'}$,
\begin{equation}
 \|S^{q_1-q_2}_{z_0,\tau}f\|_{L^\infty(X)}
 \leq C\|q_1-q_2\|_{L^p(\Omega)}\|f\|_{L^\infty(X)}.
\end{equation}
Thus approximation by coefficients in $L^a\cap L^b$ converges to the actual nested integral operator in $L^\infty(X)$, so the interpolation argument produces precisely $S^q_{z_0,\tau}$ as defined above.

The estimate \eqref{S-Lpstar-from-Linf} follows directly from Lemma \ref{lem-qstar} with exponent $p$:
\begin{align}
        \|S^q_{z_0,\tau}f\|_{L^{r}(X)}
        &\leq C \tau^{-1/2}
        \|\chi\bar\partial_\psi^{-1}(qf)\|_{W^{1,p}(X)}          \notag\\
        &\leq C \tau^{-1/2}
        \|q\|_{L^p(\Omega)}\|f\|_{L^\infty(X)} .
\end{align}
The $L^{r}$ estimate for $B_{z_0,\tau}$ is the same application of Lemma \ref{lem-qstar}, using
\begin{equation}
        \|\chi(A-A(z_0))\|_{W^{1,p}(X)}
        \leq C\bigl(\|q\|_{L^p(\Omega)}+|A(z_0)|\bigr),
\end{equation}
where the local $L^p$ norm of $A$ is controlled by Lemma~\ref{lem-cauchy-basic}.  For the $L^\infty$ estimate, apply the preceding interpolation argument to the linear map
\begin{equation}
        q\longmapsto \partial_\psi^{-1}\bigl(\chi\bar\partial^{-1}q\bigr).
\end{equation}
This gives the stated bound for the term with $A$.  The constant term $-A(z_0)\partial_\psi^{-1}\chi$ is estimated by Lemma \ref{lem-inf}; since $1-1/p<1/2$, the bound $\tau^{-1/2}\log(2+\tau)$ is stronger than the right hand side of \eqref{B-Linf-interpolated} after increasing $C_{\theta_0}$.
\end{proof}

\subsection{The Neumann series}

\begin{prop}\label{prop-cgo-precise}
Let $1<p<2$, let $V\in L^p(\Omega)$, put $q=V/4$, and let $A=\bar\partial^{-1}q$.  Fix
\begin{equation}\label{alpha-def-general}
        0<\theta_0<1-\frac1p,
        \qquad
        \alpha=1-\frac1p-\theta_0>0.
\end{equation}
For a.e. $z_0\in\Omega$ and all sufficiently large real $\tau$, the equation
\begin{equation}
        W=B_{z_0,\tau}+S^q_{z_0,\tau}W
\end{equation}
has a unique solution
\begin{equation}\label{W-neumann-series}
        W_{z_0,\tau}=\sum_{j=0}^\infty U_j,
        \qquad
        U_j=(S^q_{z_0,\tau})^jB_{z_0,\tau},
\end{equation}
in $L^\infty(X)$. The index $j$ counts the inserted two-Cauchy blocks; $U_0=B_{z_0,\tau}$ is already a correction term. Recall that $r=2p/(2-p)$. There is a constant $C_V$, depending on the fixed potential norm and on $\theta_0$, such that, with
\begin{equation}\label{rho-tau-def}
        \rho_\tau=C_V\tau^{-\alpha},
\end{equation}
one has $\rho_\tau\leq1/2$ for large $\tau$ and
\begin{align}
        \|U_j\|_{L^\infty(X)}
        &\leq C\bigl(1+|A(z_0)|\bigr)\tau^{-\alpha}\rho_\tau^j,
        \label{Uj-Linf-general}\\
        \|U_j\|_{L^r(X)}
        &\leq C\bigl(1+|A(z_0)|\bigr)\tau^{-1/2}\rho_\tau^j,
        \qquad j\geq0 .
        \label{Uj-Lr-general}
\end{align}
Consequently
\begin{equation}
        u_{z_0,\tau}=e^{i\tau\Phi_{z_0}}
        \left(1+e^{-i\tau\psi_{z_0}}W_{z_0,\tau}\right)
\end{equation}
is a weak solution of $(-\Delta+V)u=0$ in $\Omega$.
\end{prop}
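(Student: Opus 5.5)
The plan is to derive everything from Lemma~\ref{lem-bukhgeim-interpolation}, applied with $Z=\overline\Omega$, so that all constants are uniform in $z_0$. Fix $z_0\in\Omega$ a Lebesgue point of $A$; since $A\in W^{1,p}_{\mathrm{loc}}$, such points have full measure.

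\textbf{Step 1 (choice of $\rho_\tau$).} By \eqref{S-Linf-interpolated}, $S^q_{z_0,\tau}$ maps $L^\infty(X)$ to itself with
\[
\|S^q_{z_0,\tau}\|_{L^\infty\to L^\infty}\le C_{\theta_0}\|q\|_{L^p}\tau^{-\alpha}.
\]
Set $\rho_\tau=C_V\tau^{-\alpha}$ with $C_V=C_{\theta_0}\|q\|_{L^p}+1$. Since $\alpha>0$, we have $\rho_\tau\le1/2$ for $\tau$ large, and this threshold is independent of $z_0$.

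\textbf{Step 2 (existence and uniqueness).} The map $I-S^q_{z_0,\tau}$ is invertible on $L^\infty(X)$ by a Neumann series. This gives existence and uniqueness of $W$ and the expansion \eqref{W-neumann-series}, and the series converges in $L^\infty(X)$.

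\textbf{Step 3 (the bounds on $U_j$).} By \eqref{B-Linf-interpolated}, $\|U_0\|_{L^\infty}\le C(1+|A(z_0)|)\tau^{-\alpha}$. Iterating the operator bound from Step~1 gives \eqref{Uj-Linf-general}.

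For \eqref{Uj-Lr-general} there are two cases.
\begin{itemize}
\item For $j=0$, use \eqref{B-Lpstar-interpolated}.
\item For $j\ge1$, write $U_j=S^q U_{j-1}$ and apply \eqref{S-Lpstar-from-Linf} together with the $L^\infty$ bound on $U_{j-1}$:
\[
\|U_j\|_{L^r}\le C\tau^{-1/2}(1+|A(z_0)|)\tau^{-\alpha}\rho_\tau^{j-1}\le C'(1+|A(z_0)|)\tau^{-1/2}\rho_\tau^{j},
\]
because $\tau^{-\alpha}\le\rho_\tau$ once $C_V\ge1$.
\end{itemize}

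\textbf{Step 4 (the solution property).} This is the main obstacle: the equation must be justified at low regularity. On $\Omega$ we have $\chi=1$. Applying $\partial$ to $W=\partial_\psi^{-1}(\chi b_{z_0})+\partial_\psi^{-1}(\chi\bar\partial_\psi^{-1}(qW))$ gives
\[
\partial W=e^{i\tau\psi}\bigl(b_{z_0}+\bar\partial_\psi^{-1}(qW)\bigr).
\]
Multiplying by $e^{-i\tau\psi}$ and applying $\bar\partial$, with $\bar\partial b_{z_0}=q$, yields
\[
\bar\partial\bigl(e^{-i\tau\psi}\partial W\bigr)=q+e^{-i\tau\psi}qW=q(1+\zeta).
\]
Since $W=e^{i\tau\psi}\zeta$, this is exactly \eqref{conjugated-remainder-equation}. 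These identities hold in distributions: $qW\in L^p$ because $W\in L^\infty$, and Cauchy transforms invert $\partial,\bar\partial$ distributionally, which can be justified by approximating $q$ in $L^p$ using the continuity bound \eqref{S-coefficient-continuity}.

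Next, $u=e^{i\tau\Phi}+e^{i\tau\Phi}\zeta$, and
\[
e^{i\tau\Phi}\zeta=e^{i\tau\Phi-i\tau\psi}W=e^{-i\tau\bar\Phi}W.
\]
Then $4\partial\bar\partial(e^{-i\tau\bar\Phi}W)=4e^{-i\tau\bar\Phi}\bar\partial\bigl(e^{-i\tau\psi}\partial W\bigr)$, after using $\partial\bar\Phi=0$ and rewriting the conjugation. Combined with the previous display this gives $\Delta u=Vu$ in distributions, since $e^{i\tau\Phi}$ is holomorphic and the quantity $e^{-i\tau\bar\Phi}e^{i\tau\psi}$ times $\zeta$-terms recombines to $e^{i\tau\Phi}q(1+\zeta)\cdot4$.

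Finally, $u\in H^1(\Omega)$. Indeed, $W\in L^\infty$, and $\partial W$, $\bar\partial W$ lie in $L^r_{\mathrm{loc}}\subset L^2_{\mathrm{loc}}$ since $r>2$, by Lemma~\ref{lem-cauchy-basic}; the Beurling transform handles the cross derivative. So the distributional equation is the weak formulation $a_V(u,\phi)=0$ for $\phi\in H^1_0(\Omega)$, by density of test functions and the boundedness of $a_V$.
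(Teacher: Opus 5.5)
Your proof is correct and follows the paper's. Steps 1--3 are exactly its Neumann-series argument from Lemma~\ref{lem-bukhgeim-interpolation}; in Step 4 you get $u\in H^1(\Omega)$ directly from $W=\partial^{-1}\bigl(e^{i\tau\psi}\chi(b_{z_0}+\bar\partial_\psi^{-1}(qW))\bigr)\in W^{1,r}(X)$ via \eqref{cauchy-local-W1s-all}, instead of the paper's route $\Delta u=Vu\in L^p\Rightarrow u\in W^{2,p}_{\mathrm{loc}}$, and this is equally valid. One small slip: the correct identity is $\partial\bar\partial(e^{-i\tau\bar\Phi_{z_0}}W)=e^{i\tau\Phi_{z_0}}\bar\partial(e^{-i\tau\psi_{z_0}}\partial W)$, with prefactor $e^{i\tau\Phi_{z_0}}$ rather than $e^{-i\tau\bar\Phi_{z_0}}$; with that fixed, $\Delta u=Vu$ follows as you state.
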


\begin{proof}
By Lemma \ref{lem-bukhgeim-interpolation},
\begin{equation}
        \|S^q_{z_0,\tau}\|_{L^\infty\to L^\infty}
        \leq C_{\theta_0}\tau^{-\alpha}\|q\|_{L^p(\Omega)}.
\end{equation}
Choosing $\tau$ sufficiently large makes the right hand side at most $1/2$, and the Neumann series converges in $L^\infty(X)$.  The $L^\infty$ estimate \eqref{Uj-Linf-general} follows from \eqref{B-Linf-interpolated} and the contraction estimate.  The $L^r$ estimate for $j=0$ is \eqref{B-Lpstar-interpolated}.  For $j\geq1$, combine \eqref{S-Lpstar-from-Linf} with \eqref{Uj-Linf-general} applied to $U_{j-1}$; after increasing the constant depending on the fixed potential norm, this gives \eqref{Uj-Lr-general}.

The identity \eqref{conjugated-remainder-equation} follows from the Neumann equation.  Because $q$ was extended by zero and $\chi=1$ on an open neighbourhood $U$ of $\overline\Omega$, the same calculation gives the conjugated equation throughout $U$.  Since $W_{z_0,\tau}\in L^\infty(X)$, the function $u_{z_0,\tau}$ is locally bounded and satisfies
\begin{equation}
        \Delta u_{z_0,\tau}=Vu_{z_0,\tau}\in L^p(U).
\end{equation}
Local elliptic regularity gives $u_{z_0,\tau}\in W^{2,p}_{\mathrm{loc}}(U)$, and hence in $W^{2,p}$ on a smaller neighbourhood of $\overline\Omega$.  Since $p>1$ in dimension two, this implies $u_{z_0,\tau}\in H^1(\Omega)$ with the required trace.  Thus $u_{z_0,\tau}$ is an admissible weak solution.
\end{proof}

\subsection{The opposite Cauchy orientation}

For $\widetilde V$ we reverse the factorization and use the changes of orientation and phase sign from Remark~\ref{rem-cauchy-involution}.  Set
\begin{equation}
        \widetilde b_{z_0}(z)=\widetilde A(z)-\widetilde A(z_0),
\end{equation}
for $z_0$ in the full-measure set of Lebesgue points of $\widetilde A$.  The opposite oscillatory Cauchy inverses are
\begin{equation}
        \partial_{-\psi}^{-1}h=\partial^{-1}(e^{-i\tau\psi_{z_0}}h),
        \qquad
        \bar\partial_{-\psi}^{-1}h=\bar\partial^{-1}(e^{i\tau\psi_{z_0}}h),
\end{equation}
and the tilded initial correction and Neumann operator are
\begin{equation}\label{tilde-B-S-def-precise}
        \widetilde B_{z_0,\tau}=\bar\partial_{-\psi}^{-1}(\chi\widetilde b_{z_0}),
        \qquad
        \widetilde S_{z_0,\tau}f=\bar\partial_{-\psi}^{-1}\bigl(\chi\partial_{-\psi}^{-1}(\widetilde q f)\bigr).
\end{equation}
If
\begin{equation}
        \widetilde W=\widetilde B_{z_0,\tau}+\widetilde S_{z_0,\tau}\widetilde W,
        \qquad
        \widetilde\zeta_{z_0,\tau}=e^{-i\tau\psi_{z_0}}\widetilde W,
\end{equation}
then the conjugated equation is
\begin{equation}\label{tilde-conjugated-remainder-equation}
        \partial\left(e^{-i\tau\psi_{z_0}}\bar\partial(e^{i\tau\psi_{z_0}}\widetilde\zeta_{z_0,\tau})\right)
        =\widetilde q(1+\widetilde\zeta_{z_0,\tau})
\end{equation}
in $\Omega$, and
\begin{equation}
        \widetilde u_{z_0,\tau}=e^{i\tau\bar\Phi_{z_0}}(1+\widetilde\zeta_{z_0,\tau})
\end{equation}
solves $(-\Delta+\widetilde V)\widetilde u=0$ weakly in $\Omega$.  If
\begin{equation}
        \widetilde U_j=\widetilde S_{z_0,\tau}^j\widetilde B_{z_0,\tau},
        \qquad
        \widetilde\rho_\tau=C_{\widetilde V}\tau^{-\alpha},
\end{equation}
then the same argument proves \eqref{Uj-Linf-general}--\eqref{Uj-Lr-general}, with $U_j$, $A(z_0)$, and $\rho_\tau$ replaced by $\widetilde U_j$, $\widetilde A(z_0)$, and $\widetilde\rho_\tau$.

\subsection{Measurability and center domination}

\begin{lem}\label{lem-center-measurability}
Choose measurable representatives of $A$ and $\widetilde A$, and let
$\mathcal Z\subset\Omega$ be a common full-measure set of Lebesgue points of these
representatives.  For $z_0\notin\mathcal Z$, define the initial corrections, all finite
Neumann iterates, and the Neumann sums to be zero.  Fix $\tau$ sufficiently
large that
\begin{equation*}
        \rho_\tau\leq\frac12,
        \qquad
        \widetilde\rho_\tau\leq\frac12.
\end{equation*}
Then every finite iterate admits a jointly measurable representative on
$\Omega\times X$:
\begin{equation*}
        (z_0,z)\longmapsto U_j(z;z_0,\tau),
        \qquad
        (z_0,z)\longmapsto \widetilde U_j(z;z_0,\tau).
\end{equation*}
These representatives may be chosen so that, outside one set of product
measure zero, the estimates
\begin{align}
 |U_j(z;z_0,\tau)|
 &\leq C\bigl(1+|A(z_0)|\bigr)\tau^{-\alpha}\rho_\tau^j,
                                                        \label{Uj-pointwise-majorant}\\
 |\widetilde U_j(z;z_0,\tau)|
 &\leq C\bigl(1+|\widetilde A(z_0)|\bigr)
        \tau^{-\alpha}\widetilde\rho_\tau^j             \label{tilde-Uj-pointwise-majorant}
\end{align}
hold simultaneously for all $j\geq0$.  In particular, the two Neumann series
converge absolutely for almost every $(z_0,z)$ and define jointly measurable
representatives of the $L^\infty(X)$ Neumann sums.  They satisfy
\begin{align}
 |W_{z_0,\tau}(z)|
 &\leq C\bigl(1+|A(z_0)|\bigr)\tau^{-\alpha},
                                                        \label{W-pointwise-majorant}\\
 |\widetilde W_{z_0,\tau}(z)|
 &\leq C\bigl(1+|\widetilde A(z_0)|\bigr)\tau^{-\alpha}  \label{tilde-W-pointwise-majorant}
\end{align}
for almost every $(z_0,z)$.

For every $\varphi\in C_0^\infty(\Omega)$ one has
\begin{equation}\label{center-weight-integrability}
        \int_\Omega |\varphi(z_0)|
        \bigl(1+|A(z_0)|\bigr)
        \bigl(1+|\widetilde A(z_0)|\bigr)\,\dd^2z_0<\infty .
\end{equation}
Consequently, after multiplication by $\varphi(z_0)Q(z)$, the one-sided Neumann series and the mixed double series are absolutely integrable in $(z_0,z)$.  In particular, the center average and the Neumann sums may be interchanged term by term.
\end{lem}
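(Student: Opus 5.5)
The plan is to build the representatives from integral kernels that depend measurably on the parameters. The uniform bounds of Proposition~\ref{prop-cgo-precise} then transfer to almost every $(z_0,z)$ by Fubini.

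\textbf{Measurable representatives.} First note that $\psi_{z_0}(z)$ is a polynomial in $(z_0,z)$. Hence $B_{z_0,\tau}(z)$ is given by the absolutely convergent integral
\[
\frac1\pi\int\frac{e^{i\tau\psi_{z_0}(z')}\chi(z')\bigl(A(z')-A(z_0)\bigr)}{\bar z-\bar z'}\,\dd^2z'
\]
for a.e.\ $z$, since $\chi A\in L^r_c$ and $k_R\in L^{r'}$. The integrand is jointly measurable in $(z_0,z,z')$, so Tonelli/Fubini gives a jointly measurable function of $(z_0,z)$. We multiply it by $\mathbf 1_{\mathcal Z}(z_0)$, which is measurable because the Lebesgue-point set of a fixed measurable representative is Borel up to null sets.

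Inductively, suppose $U_{j-1}$ is jointly measurable and bounded by $C(1+|A(z_0)|)$ off a null set. Then
\[
U_j=S^q_{z_0,\tau}U_{j-1}
\]
is a nested integral against the kernel of two Cauchy transforms with phases. It is dominated by $C K_R*(|q|\,|U_{j-1}(\cdot;z_0)|)$, which is finite everywhere by Lemma~\ref{lem-localized-kernel-mapping}. Fubini again yields joint measurability. The tilded iterates are handled identically.

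\textbf{Pointwise majorants.} For each fixed $z_0\in\mathcal Z$, the section $z\mapsto U_j(z;z_0,\tau)$ equals the $L^\infty(X)$ function from Proposition~\ref{prop-cgo-precise}. This holds because the nested formula coincides with the interpolation extension, as shown at the end of the proof of Lemma~\ref{lem-bukhgeim-interpolation}. Therefore \eqref{Uj-Linf-general} holds for a.e.\ $z$ in that section.

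The set where \eqref{Uj-pointwise-majorant} fails is measurable, and each of its $z_0$-sections is null. By Fubini it has product measure zero. Taking the countable union over $j$ and over both potentials gives one exceptional null set.

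Off this set, $\rho_\tau\le 1/2$ gives absolute convergence of $\sum_j U_j$. The bound \eqref{W-pointwise-majorant} follows from the geometric series. The pointwise sum is measurable, and for a.e.\ $z_0$ its section agrees with the $L^\infty$ Neumann sum, since $L^\infty$ convergence implies a.e.\ convergence.

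\textbf{Integrability in the center.} For \eqref{center-weight-integrability}, note that $A,\widetilde A\in L^r(\C)$ with $r>2$. By Cauchy--Schwarz (or Hölder), $|\varphi|(1+|A|)(1+|\widetilde A|)\in L^1$ on the support of $\varphi$, since $L^r\cdot L^r\subset L^{r/2}\subset L^1_{\mathrm{loc}}$.

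For the series, the majorants give
\[
|\varphi(z_0)Q(z)|\,|U_j|\,|\widetilde U_k|\le C|\varphi(z_0)|\,|Q(z)|\,\omega(z_0)\widetilde\omega(z_0)\,\tau^{-2\alpha}\,2^{-j-k}.
\]
This is summable in $(j,k)$ and integrable in $(z_0,z)$ because $Q\in L^1(\Omega)$. The one-sided series are handled similarly, with $1$ in place of the missing factor. Dominated convergence (Fubini for series) then justifies exchanging the center average with the sums.

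\textbf{Main obstacle.} The main obstacle is the identification step: showing that the Fubini-defined nested integral agrees, for a.e.\ $z_0$, with the operator whose $L^\infty$ bound was obtained by interpolation. I would rely on the approximation estimate \eqref{S-coefficient-continuity}, together with the absolute-kernel domination, to make this rigorous.
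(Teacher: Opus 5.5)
Your proposal is correct and follows essentially the same route as the paper. Both arguments obtain joint measurability of the iterates from absolutely convergent parameterized Cauchy integrals, transfer the $L^\infty(X)$ bounds of Proposition~\ref{prop-cgo-precise} to a single product-null set via Fubini and a countable union over $j$, sum the geometric series, and use $A\widetilde A\in L^{r/2}\subset L^1_{\mathrm{loc}}$ with Tonelli for the term-by-term exchange. The only difference is minor: you get measurability directly from Fubini--Tonelli, treating each two-Cauchy block as one double integral dominated by $K_R*(|q|\,|U_{j-1}|)$, whereas the paper truncates the kernel and integrand, passes to the limit by dominated convergence, and applies one Cauchy transform at a time; both are valid.
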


\begin{proof}
\noindent\emph{Finite iterates.}
We use the following elementary fact.  The kernels of $\partial^{-1}$ and
$\bar\partial^{-1}$ are measurable and locally
integrable on $\C$.  If $h(z_0,z')$ is jointly measurable, compactly supported
in $z'$, and the corresponding Cauchy integral is finite almost everywhere,
then
\begin{equation*}
        (z_0,z)\longmapsto
        \int \frac{h(z_0,z')}{z-z'}\,\dd^2z'
\end{equation*}
is jointly measurable after an arbitrary definition on the exceptional null set.  Indeed, truncate the Cauchy kernel to $n^{-1}\leq |z-z'|\leq n$ and replace $h$ by
$h\mathbf 1_{\{|h|\leq n\}}$.  The parameterized integral theorem applies to
each bounded truncation, and dominated convergence gives the original integral
wherever the original integral is absolutely convergent.  The argument is unchanged for the conjugated kernel and after multiplication by the continuous factors $e^{\pm i\tau\psi_{z_0}(z')}$.

The maps
\begin{equation*}
        (z_0,z)\longmapsto A(z)-A(z_0),
        \qquad
        (z_0,z)\longmapsto \widetilde A(z)-\widetilde A(z_0)
\end{equation*}
are jointly measurable.  For almost every $z_0$, the compactly supported
functions $\chi(A-A(z_0))$ and
$\chi(\widetilde A-\widetilde A(z_0))$ belong to $L^r$, where $r>2$.
Since the localized Cauchy kernel belongs to $L^{r'}$ and $r'<2$, the defining
integrals for $B_{z_0,\tau}$ and $\widetilde B_{z_0,\tau}$ are absolutely
convergent for almost every $(z_0,z)$.  The preceding observation therefore
gives jointly measurable representatives of $U_0$ and $\widetilde U_0$.

Assume that $U_j$ is jointly measurable.  For almost every fixed
$z_0$, Proposition~\ref{prop-cgo-precise} gives $U_j\in L^\infty(X)$, and hence
$qU_j\in L^p$.  The first Cauchy transform is absolutely convergent almost
everywhere and belongs locally to $L^r$ by the Hardy--Littlewood--Sobolev
estimate.  After multiplication by $\chi$, the second Cauchy transform is
absolutely convergent because $r>2$.  Both transforms are jointly measurable by
the preceding parameterized-integral argument.  This proves the assertion for
$U_{j+1}$, and induction gives joint measurability of every finite iterate.  The
tilded induction is identical.

\par\smallskip\noindent\emph{Neumann sums.}
For each $j$, Proposition~\ref{prop-cgo-precise} gives, for almost every
$z_0\in\mathcal Z$,
\begin{equation*}
        \|U_j(\,\cdot\,;z_0,\tau)\|_{L^\infty(X)}
        \leq C\omega(z_0)\tau^{-\alpha}\rho_\tau^j.
\end{equation*}
Because the representative is jointly measurable, the set on which the
corresponding pointwise inequality fails is measurable.  Its $z$-section has
measure zero for almost every $z_0$, and hence Fubini's theorem shows that the
set itself has product measure zero.  Taking the countable union over $j$, and
then adjoining the analogous tilded exceptional sets, gives one product-null
set outside which \eqref{Uj-pointwise-majorant} and
\eqref{tilde-Uj-pointwise-majorant} hold for every $j$ simultaneously.

The two geometric series therefore converge absolutely outside that null set.
Their pointwise sums are jointly measurable.  For almost every fixed $z_0$,
they agree almost everywhere in $z$ with the $L^\infty(X)$ limits constructed
in Proposition~\ref{prop-cgo-precise}.  Since $\rho_\tau$ and
$\widetilde\rho_\tau$ are at most $1/2$, summing the pointwise bounds proves
\eqref{W-pointwise-majorant} and \eqref{tilde-W-pointwise-majorant}.

\par\smallskip\noindent\emph{Integrability in the center.}
On the compact support of $\varphi$, we have $A,\widetilde A\in L^r$, where
\begin{equation*}
        r=\frac{2p}{2-p}>2.
\end{equation*}
Hence $A$, $\widetilde A$, and $A\widetilde A$ are integrable there, the last because $A\widetilde A\in L^{r/2}$ and $r/2>1$.  This proves
\eqref{center-weight-integrability}.

Since $Q\in L^1(\Omega)$ and the oscillatory factors have modulus one,
\eqref{Uj-pointwise-majorant}--\eqref{tilde-W-pointwise-majorant} imply
\begin{align*}
 &\sum_{j\geq0}\int_\Omega\int_\Omega
 |\varphi(z_0)|\,|Q(z)|\,|U_j(z;z_0,\tau)|\,\dd^2z\,\dd^2z_0\\
 &\qquad\leq
 C\tau^{-\alpha}\|Q\|_{L^1(\Omega)}
 \int_\Omega |\varphi(z_0)|\omega(z_0)\,\dd^2z_0<\infty,
\end{align*}
and the same estimate holds for the tilded one-sided series.  Similarly,
\begin{align*}
 &\sum_{j,k\geq0}\int_\Omega\int_\Omega
 |\varphi(z_0)|\,|Q(z)|\,
 |U_j(z;z_0,\tau)|\,|\widetilde U_k(z;z_0,\tau)|
 \,\dd^2z\,\dd^2z_0\\
 &\qquad\leq
 C\tau^{-2\alpha}\|Q\|_{L^1(\Omega)}
 \int_\Omega |\varphi(z_0)|\omega(z_0)\widetilde\omega(z_0)
 \,\dd^2z_0<\infty.
\end{align*}
Tonelli's theorem therefore justifies the center average and the termwise expansion of the Neumann series.  Changes of variables inside a fixed Born kernel require a separate absolute-convergence argument, given in Lemma~\ref{lem-fixed-order-absolute-convergence} below.
\end{proof}

\section{The averaged identity and the Born decomposition}\label{sec:averaged-identity}\label{sec:reduction-cancellation}

Equality of the boundary maps first gives an interior identity.  Averaging this identity over the center of the quadratic phase identifies its leading term and reduces the remainder to finitely many Born orders.

\subsection{The weak Alessandrini identity}

\begin{lem}\label{lem-alessandrini}
Let $V,\widetilde V\in L^p(\Omega)$ for some $p>1$, and assume that the corresponding Dirichlet problems are well posed in the sense defined before Theorem \ref{main-thm}.  If $u,\widetilde u\in H^1(\Omega)$ solve
\begin{equation}
        (-\Delta+V)u=0,
        \qquad
        (-\Delta+\widetilde V)\widetilde u=0
\end{equation}
weakly in $\Omega$, then
\begin{equation}\label{weak-alessandrini}
        \int_\Omega (V-\widetilde V)u\widetilde u\,\dd x
        =\langle \Lambda_V u|_{\partial\Omega},\widetilde u|_{\partial\Omega}\rangle
        -\langle \Lambda_{\widetilde V}\widetilde u|_{\partial\Omega},u|_{\partial\Omega}\rangle .
\end{equation}
Consequently, if $\Lambda_V=\Lambda_{\widetilde V}$, then
\begin{equation}\label{weak-alessandrini-zero}
        \int_\Omega (V-\widetilde V)u\widetilde u\,\dd x=0.
\end{equation}
\end{lem}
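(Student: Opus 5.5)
The plan is to apply the definition \eqref{DN-space} of the weak Dirichlet-to-Neumann map twice and then use the bilinear symmetry of $a_V$. Write $f=u|_{\partial\Omega}$ and $g=\widetilde u|_{\partial\Omega}$, both in $H^{1/2}(\partial\Omega)$.

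First I identify $u$ with $u_f$. Since $0$ is not a Dirichlet eigenvalue of $-\Delta+V$, the weak solution with trace $f$ is unique. The given $u\in H^1(\Omega)$ is a weak solution with that trace, meaning $a_V(u,w)=0$ for all $w\in H^1_0(\Omega)$. Hence $u=u_f$, and likewise $\widetilde u=\widetilde u_g$ for $\widetilde V$.

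Next I choose extensions in \eqref{DN-space}. The definition allows any $H^1$ extension of the second argument, so I take $w_g=\widetilde u$ and obtain
\[
\langle\Lambda_V f,g\rangle=a_V(u,\widetilde u)=\int_\Omega\nabla u\cdot\nabla\widetilde u\,\dd x+\int_\Omega Vu\widetilde u\,\dd x .
\]
In the same way, taking $u$ as the extension of $f$,
\[
\langle\Lambda_{\widetilde V} g,f\rangle=a_{\widetilde V}(\widetilde u,u)=\int_\Omega\nabla\widetilde u\cdot\nabla u\,\dd x+\int_\Omega\widetilde V\widetilde u u\,\dd x .
\]
All integrals are finite by the Sobolev embedding discussed before \eqref{weak-form-def}: $u\widetilde u\in L^{p'}$ and $V,\widetilde V\in L^p$.

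Subtracting the two displays, the gradient terms cancel because the form is bilinear with no conjugation. What remains is $\int_\Omega(V-\widetilde V)u\widetilde u\,\dd x$, which is \eqref{weak-alessandrini}.

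If $\Lambda_V=\Lambda_{\widetilde V}$, the symmetry $\langle\Lambda_{\widetilde V}g,f\rangle=\langle\Lambda_{\widetilde V}f,g\rangle$ (recorded after \eqref{DN-space}) makes the right-hand side equal to $\langle\Lambda_V f,g\rangle-\langle\Lambda_V f,g\rangle=0$. This gives \eqref{weak-alessandrini-zero}.

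There is no serious obstacle here. The only point needing care is that $u$ and $\widetilde u$ coincide with the canonical solutions used in the definition, and that is exactly the uniqueness assumption.
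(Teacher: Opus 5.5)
Your proof is correct and follows the paper's argument: take $\widetilde u$ and $u$ as the extensions in the two pairings, cancel the gradient terms by bilinearity, and use the bilinear symmetry of $\Lambda_{\widetilde V}$ for the consequence. You also state explicitly that uniqueness identifies $u$ with $u_f$ and $\widetilde u$ with the canonical solution for $\widetilde V$; the paper uses this step without stating it.
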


\begin{proof}
Put $f=u|_{\partial\Omega}$ and $g=\widetilde u|_{\partial\Omega}$.  In the definition of $\Lambda_V f$ we may take $w_g=\widetilde u$, and in the definition of $\Lambda_{\widetilde V}g$ we may take the extension of $f$ to be $u$.  Thus
\begin{align*}
        \langle \Lambda_V f,g\rangle
        &=\int_\Omega \nabla u\cdot\nabla \widetilde u\,\dd x+
          \int_\Omega Vu\widetilde u\,\dd x,\\
        \langle \Lambda_{\widetilde V}g,f\rangle
        &=\int_\Omega \nabla \widetilde u\cdot\nabla u\,\dd x+
          \int_\Omega \widetilde V\widetilde u u\,\dd x.
\end{align*}
The gradient terms are equal in the bilinear convention used here.  Subtracting gives \eqref{weak-alessandrini}.  If $\Lambda_V=\Lambda_{\widetilde V}$, then
\begin{equation*}
        \langle \Lambda_V f,g\rangle
        =\langle \Lambda_{\widetilde V} f,g\rangle
        =\langle \Lambda_{\widetilde V} g,f\rangle,
\end{equation*}
where the last equality uses the bilinear symmetry of the weak Dirichlet-to-Neumann map for $\widetilde V$.  Hence the right side of \eqref{weak-alessandrini} vanishes.
\end{proof}

\subsection{Averaging in the phase center}

Instead of recovering the potential pointwise at a fixed center, we average against an arbitrary test function.  The leading quadratic phase then becomes $E_\tau\varphi$, so that only $Q\in L^1$ is needed for distributional recovery.

Applying Lemma~\ref{lem-alessandrini} to the CGO solution from Proposition~\ref{prop-cgo-precise} and to its tilded analogue gives, for a.e. $z_0$,
\begin{equation}\label{alessandrini}
\int Q(z) e^{i\tau\psiz(z)}(1+\zeta_\tau(z))(1+\widetilde\zeta_\tau(z))\dd^2 z=0.
\end{equation}
Expand the two remainders as
\begin{equation}\label{remainder-neumann-expansion}
\begin{aligned}
        \zeta_\tau&=e^{-i\tau\psi_{z_0}}W_{z_0,\tau},
        &\qquad
        \widetilde\zeta_\tau&=e^{-i\tau\psi_{z_0}}\widetilde W_{z_0,\tau},\\
        W_{z_0,\tau}&=\sum_{j=0}^\infty U_j,
        &\qquad
        \widetilde W_{z_0,\tau}&=\sum_{k=0}^\infty \widetilde U_k .
\end{aligned}
\end{equation}
Here $U_j=(S^q_{z_0,\tau})^jB_{z_0,\tau}$ and $\widetilde U_k=\widetilde S_{z_0,\tau}^k\widetilde B_{z_0,\tau}$.  Hence
\begin{equation}\label{pointwise-born-decomposition}
 e^{i\tau\psi_{z_0}}(1+\zeta_\tau)(1+\widetilde\zeta_\tau)
 = e^{i\tau\psi_{z_0}}
   +W_{z_0,\tau}+\widetilde W_{z_0,\tau}
   +e^{-i\tau\psi_{z_0}}W_{z_0,\tau}\widetilde W_{z_0,\tau}.
\end{equation}
The exterior phase cancels in the one-sided terms and remains in the mixed product.  Multiplying \eqref{alessandrini} by $\tau\varphi(z_0)/\pi$ and integrating in $z_0$ gives
\begin{equation}\label{averaged-born-decomposition}
0=\mathcal L_\tau
  +\sum_{j\ge0}\mathcal B_{j,\tau}
  +\sum_{k\ge0}\widetilde{\mathcal B}_{k,\tau}
  +\sum_{j,k\ge0}\mathcal M_{j,k,\tau},
\end{equation}
where
\begin{align}
\mathcal L_\tau
&=\frac{\tau}{\pi}\int\varphi(z_0)
  \int Q(z)e^{i\tau\psi_{z_0}(z)}\dd^2z\dd^2z_0, \label{leading-functional-def}\\
\mathcal B_{j,\tau}
&=\frac{\tau}{\pi}\int\varphi(z_0)
  \int Q(z)U_j(z;z_0,\tau)\dd^2z\dd^2z_0,
  \label{left-born-functional-def}\\
\widetilde{\mathcal B}_{k,\tau}
&=\frac{\tau}{\pi}\int\varphi(z_0)
  \int Q(z)\widetilde U_k(z;z_0,\tau)\dd^2z\dd^2z_0,
  \label{right-born-functional-def}\\
\mathcal M_{j,k,\tau}
&=\frac{\tau}{\pi}\int\varphi(z_0)
  \int Q(z)e^{-i\tau\psi_{z_0}(z)}
  U_j(z;z_0,\tau)\widetilde U_k(z;z_0,\tau)\dd^2z\dd^2z_0.
  \label{mixed-born-functional-def}
\end{align}

By Lemma~\ref{lem-center-measurability}, every term in this identity is measurable, the Neumann sums are absolutely integrable after multiplication by $\varphi(z_0)Q(z)$, and the displayed series may be integrated term by term.

\subsection{The leading term}

Let $\varphi\in C_0^\infty(\Omega)$. Since $\psi_{z_0}(z)=\psi_z(z_0)$, Fubini's theorem identifies the leading term with $\int Q E_\tau\varphi$. Here $Q\in L^1(\Omega)$, and Lemma~\ref{lem-Etau} gives $E_\tau\varphi\to\varphi$ uniformly on $\overline\Omega$. Thus
\begin{equation}\label{leading-limit}
 \mathcal L_\tau
 =\int Q(z)E_\tau\varphi(z)\dd^2z
 =\int Q(z)\varphi(z)\dd^2z+o(1).
\end{equation}

\subsection{The Neumann tails}

The high-order terms are controlled by the usual geometric summation of the Bukhgeim Neumann series, as in \cite{BlastenTzouWang2020}.

\begin{prop}\label{prop-neumann-tails}
Let $1<p<2$ and $\varphi\in C_0^\infty(\Omega)$.  Let $N_1,N_2$ be the smallest nonnegative integers such that
\begin{equation}\label{tail-cutoff-inequalities}
 1-\sigma-\alpha N_1<0,
 \qquad
 1-\sigma-\alpha-\alpha N_2<0.
\end{equation}
Equivalently,
\begin{equation}\label{L-M-choice}
 N_1=\left\lfloor\frac{1-\sigma}{\alpha}\right\rfloor+1,
 \qquad
 N_2=\max\left\{0,
 \left\lfloor\frac{1-\sigma-\alpha}{\alpha}\right\rfloor+1
 \right\}.
\end{equation}
Then the one-sided tails of orders $j\geq N_1$ and $k\geq N_1$, and the mixed tail of total order $j+k\geq N_2$, tend to zero after center averaging.
\end{prop}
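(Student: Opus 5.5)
The plan is to bound each Born functional \emph{before} averaging in the center, with a center-dependent constant that is integrable against $|\varphi|$, and then to sum the geometric series. All interchanges of sums and integrals are justified by Lemma~\ref{lem-center-measurability}.

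\emph{Step 1: an $L^{p'}$ estimate for $U_j$.} The first step is to derive, from Proposition~\ref{prop-cgo-precise}, the bound
\begin{equation*}
 \|U_j\|_{L^{p'}(X)}\le C\,\omega(z_0)\,\tau^{-\sigma}\rho_\tau^{j},
\end{equation*}
and its tilded analogue. There are two cases.
\begin{itemize}
\item If $r\ge p'$, then $\beta=1$ and $\sigma=1/2$. Since $X$ is bounded, $L^r(X)\hookrightarrow L^{p'}(X)$, and \eqref{Uj-Lr-general} gives the claim directly.
\item If $r<p'$, then $\beta=r/p'$. Hölder interpolation gives $\|f\|_{L^{p'}}\le\|f\|_{L^r}^{\beta}\|f\|_{L^\infty}^{1-\beta}$. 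Combining \eqref{Uj-Lr-general} and \eqref{Uj-Linf-general} then yields the factor $\tau^{-\beta/2-(1-\beta)\alpha}=\tau^{-\sigma}$. The factors $\omega(z_0)\rho_\tau^j$ appear with total exponent one.
\end{itemize}

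\emph{Step 2: the one-sided tails.} Hölder's inequality in $z$ gives
\begin{equation*}
 |\mathcal B_{j,\tau}|\le \frac{\tau}{\pi}\int|\varphi(z_0)|\,\|Q\|_{L^p}\,\|U_j\|_{L^{p'}(X)}\,\dd^2z_0
 \le C\,\tau^{1-\sigma}\rho_\tau^j\int|\varphi|\,\omega\,\dd^2z_0 .
\end{equation*}
The integral on the right is finite by \eqref{center-weight-integrability}. Summing over $j\ge N_1$ with $\rho_\tau\le 1/2$ gives
\begin{equation*}
 \sum_{j\ge N_1}|\mathcal B_{j,\tau}|\le C\,\tau^{1-\sigma}\rho_\tau^{N_1}=C'\,\tau^{1-\sigma-\alpha N_1}.
\end{equation*}
This tends to zero by \eqref{tail-cutoff-inequalities}. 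The tilded tail is identical, with $\widetilde\omega$ in place of $\omega$.

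\emph{Step 3: the mixed tail.} The exterior phase in $\mathcal M_{j,k,\tau}$ has modulus one. Place $U_j$ in $L^{p'}$ by Step 1 and $\widetilde U_k$ in $L^\infty$ by \eqref{Uj-Linf-general} for the tilded series. This gives
\begin{equation*}
 |\mathcal M_{j,k,\tau}|\le C\,\tau^{1-\sigma-\alpha}\rho_\tau^{j}\widetilde\rho_\tau^{k}\int|\varphi|\,\omega\,\widetilde\omega\,\dd^2z_0 ,
\end{equation*}
and the center integral is again finite by \eqref{center-weight-integrability}. Both ratios are at most $C\tau^{-\alpha}$. There are $n+1$ pairs with $j+k=n$, and $\sum_{n\ge N_2}(n+1)(C\tau^{-\alpha})^n\le C\tau^{-\alpha N_2}$ for large $\tau$. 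Hence the mixed tail is $O(\tau^{1-\sigma-\alpha-\alpha N_2})\to0$, again by \eqref{tail-cutoff-inequalities}.

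\emph{Main obstacle.} The one delicate point is Step 1. One must check that the interpolation exponents combine exactly to $\sigma$ in both regimes of $\beta$. One must also check that the center weight enters only linearly in each one-sided factor, so that the mixed term needs only $\omega\widetilde\omega$, which is integrable because $A\widetilde A\in L^{r/2}$ with $r/2>1$. The remaining steps are routine geometric summation. The equivalence of \eqref{L-M-choice} with the minimality in \eqref{tail-cutoff-inequalities} is elementary arithmetic with floors.
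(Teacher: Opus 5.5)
Your proposal is correct and follows the paper's proof essentially step for step. It uses the same $L^{p'}$ bound for $U_j$, obtained by interpolating \eqref{Uj-Lr-general} with \eqref{Uj-Linf-general} when $r<p'$ or by the fixed-support embedding otherwise, the same H\"older pairing with $Q\in L^p$ and the center weights from \eqref{center-weight-integrability}, and the same geometric summation, with the mixed tail handled by placing $U_j$ in $L^{p'}$ and $\widetilde U_k$ in $L^\infty$.
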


\begin{proof}
The floor formulas are valid also when the displayed quotients are integers, because the inequalities in \eqref{tail-cutoff-inequalities} are strict.  The center domination in Lemma~\ref{lem-center-measurability} gives
\begin{equation}\label{tail-center-weight-integrability}
 |\varphi|\omega,
 \quad |\varphi|\widetilde\omega,
 \quad |\varphi|\omega\widetilde\omega
 \in L^1(\Omega).
\end{equation}
We begin with the $L^{p'}$ estimate common to all three tails.  If $r\leq p'$, then $\beta=r/p'$ and interpolation between \eqref{Uj-Lr-general} and \eqref{Uj-Linf-general} gives
\begin{align*}
 \|U_j\|_{L^{p'}(X)}
 &\leq
 \|U_j\|_{L^r(X)}^{\beta}
 \|U_j\|_{L^\infty(X)}^{1-\beta}\\
 &\leq
 C\omega(z_0)\tau^{-\beta/2-(1-\beta)\alpha}\rho_\tau^j
 =C\omega(z_0)\tau^{-\sigma}\rho_\tau^j.
\end{align*}
If $r>p'$, the fixed-support embedding $L^r(X)\hookrightarrow L^{p'}(X)$ gives the same conclusion with $\beta=1$ and $\sigma=1/2$.  Thus
\begin{align}
 \|U_j\|_{L^{p'}(X)}
 &\leq C\omega(z_0)\tau^{-\sigma}\rho_\tau^j,
 \label{Uj-Lpprime-general}\\
 \|\widetilde U_k\|_{L^{p'}(X)}
 &\leq C\widetilde\omega(z_0)\tau^{-\sigma}
          \widetilde\rho_\tau^k.
 \label{tilde-Uj-Lpprime-general}
\end{align}
For all sufficiently large $\tau$, one has $\rho_\tau,\widetilde\rho_\tau\leq1/2$, and hence
\begin{align}
 \sum_{j\geq N_1}\rho_\tau^j
 &\leq 2C_V^{N_1}\tau^{-\alpha N_1},
 &
 \sum_{k\geq N_1}\widetilde\rho_\tau^k
 &\leq 2C_{\widetilde V}^{N_1}\tau^{-\alpha N_1}.
 \label{linear-geometric-tail-bound}
\end{align}
If $\rho_\tau^\ast=\max\{\rho_\tau,\widetilde\rho_\tau\}$, then
\begin{align}
 \sum_{j+k\geq N_2}\rho_\tau^j\widetilde\rho_\tau^k
 &\leq \sum_{n\geq N_2}(n+1)(\rho_\tau^\ast)^n
 \leq C_{N_2}(\rho_\tau^\ast)^{N_2}
 \leq C_{N_2}'\tau^{-\alpha N_2}.
 \label{mixed-geometric-tail-bound}
\end{align}
The estimate remains valid when $N_2=0$.

For the untilded one-sided tail, H\"older's inequality, \eqref{Uj-Lpprime-general}, \eqref{tail-center-weight-integrability}, and \eqref{linear-geometric-tail-bound} give
\begin{align}
 &\tau\int |\varphi(z_0)|
 \sum_{j\geq N_1}
 \left|\int Q(z)U_j(z;z_0,\tau)\dd^2z\right|\dd^2z_0
 \notag\\
 &\qquad\leq C_\varphi\tau^{1-\sigma-\alpha N_1}=o(1).
 \label{left-linear-tail-decay}
\end{align}
The tilded one-sided tail satisfies
\begin{equation}\label{right-linear-tail-decay}
 \tau\int |\varphi(z_0)|
 \sum_{k\geq N_1}
 \left|\int Q(z)\widetilde U_k(z;z_0,\tau)\dd^2z\right|\dd^2z_0
 =o(1).
\end{equation}

For the mixed tail, place the untilded factor in $L^{p'}$ and the tilded factor in $L^\infty$.  Equations \eqref{Uj-Lpprime-general} and \eqref{Uj-Linf-general}, with their tilded counterparts, imply
\begin{equation}\label{termwise-mixed-tail-bound}
 \left|\int Qe^{-i\tau\psi_{z_0}}U_j\widetilde U_k\dd^2z\right|
 \leq
 C\omega(z_0)\widetilde\omega(z_0)
 \tau^{-\sigma-\alpha}\rho_\tau^j\widetilde\rho_\tau^k.
\end{equation}
Therefore
\begin{align}
 &\tau\int |\varphi(z_0)|
 \sum_{j+k\geq N_2}
 \left|\int Qe^{-i\tau\psi_{z_0}}U_j\widetilde U_k\dd^2z\right|
 \dd^2z_0
 \notag\\
 &\qquad\leq C_\varphi\tau^{1-\sigma-\alpha-\alpha N_2}=o(1).
 \label{mixed-tail-decay}
\end{align}
\end{proof}

\subsection{Finite-order reduction}

Let
\begin{equation}\label{remainder-functional-as-born-sums}
 \mathcal R_\tau
 =\sum_{j\geq0}\mathcal B_{j,\tau}
  +\sum_{k\geq0}\widetilde{\mathcal B}_{k,\tau}
  +\sum_{j,k\geq0}\mathcal M_{j,k,\tau}.
\end{equation}
The termwise expansion justified by Lemma~\ref{lem-center-measurability}, together with Proposition~\ref{prop-neumann-tails}, gives the finite-order reduction
\begin{align}
 \mathcal R_\tau
 &={}
 \sum_{0\leq j<N_1}\mathcal B_{j,\tau}
 +\sum_{0\leq k<N_1}\widetilde{\mathcal B}_{k,\tau}
 +\sum_{j+k<N_2}\mathcal M_{j,k,\tau}
 +o(1).
 \label{finite-tail-born-split}
\end{align}
It remains to show that each of these finitely many terms tends to zero.  The one-sided terms will follow from an $L^1$ Riemann--Lebesgue argument and $L^2$ bounds for their output densities.  For mixed terms we also need $L^{r/2}$ duality to treat the product of the two Cauchy transforms.

\section{Averaging the fixed Born terms}\label{sec:fixed-born-orders}\label{subsec-signed-output-estimates}

From this point onward, the Born orders are fixed. Constants may depend on these orders, and it is enough to prove convergence without a rate. For each term in \eqref{finite-tail-born-split}, we retain the finite oscillatory integral rather than estimating each CGO factor separately. This section first identifies the center selected by the phase, then explains the effect of averaging on the terminal differences. The preliminary calculations may be read with smooth coefficients; Proposition~\ref{prop-direct-rough-fixed-order} will justify the same identities for the original $L^p$ coefficients, using the absolute bounds of Section~\ref{sec:positive-densities}.

\subsection{The center selected by the phase}\label{subsec-center-meaning}

A branch is one of the nested Cauchy integrals arising from $U_j$ or $\widetilde U_k$. Its terminal variable is where the input factor is evaluated, and its root is the evaluation variable $x$. In a mixed term the two branches have the same root.

Every one-sided Born phase has total sign $+1$: it starts with $\psi_{z_0}(s)$ and each insertion adds
\[
 \psi_{z_0}(\lambda)-\psi_{z_0}(\eta).
\]
A mixed term has two such branches and the root phase $-\psi_{z_0}(x)$, so its total sign is again $+1$. Thus Lemma~\ref{lem-signed-quadratic-normal-form} applies at every order. In its notation,
\[
 \sum_\nu\varepsilon_\nu\psi_{z_0}(y_\nu)
 =\psi_{z_0}(c)+P,
 \qquad c=\sum_\nu\varepsilon_\nu y_\nu,
 \qquad \sum_\nu\varepsilon_\nu=1.
\]
For fixed spatial variables, $z_0=c$ is the stationary point of the center integral. Accordingly, averaging replaces the center input by $E_\tau$ applied to that input and evaluated at $c$.

We call the corresponding point for a single branch its \emph{signed output}. If the left and right outputs are $u$ and $v$, the mixed signed center is
\begin{equation}\label{model-center-rule}
 c=-x+u+v.
\end{equation}
The terminal points $s,t$ are where $A$ and $\widetilde A$ are evaluated. The root and terminal points need not coincide with $c$, the stationary point selected by center averaging. The residual phase $P$ is independent of $z_0$ and is nondegenerate on each fiber of $c$. On fibers of positive dimension, Lemma~\ref{lem-quadratic-RL} shows that the corresponding oscillatory integral tends to zero for each fixed $L^1$ amplitude. The zero-order one-sided exception is described next.

\subsection{The terminal factors after averaging}\label{subsec-model-born-terms}

Use the denominator convention
\begin{equation}\label{model-denominators}\aliaslabel{cauchy-denominator-conventions}
 D_\partial(x,y)=\bar x-\bar y,
 \qquad D_{\bar\partial}(x,y)=x-y.
\end{equation}
The simplest one-sided term already explains the normalization. At order zero its output is its terminal point $s$, so its averaged terminal factor is
\[
 A(s)E_\tau\varphi(s)-E_\tau(\varphi A)(s).
\]
Replacing $E_\tau$ by the identity would give zero. The actual error is controlled by uniform convergence for $\varphi$ and strong $L^2$ convergence for $\varphi A$. At positive orders the output differs from the terminal, leaving a residual difference; we return to this below.

\paragraph{The lowest mixed term.}
The first mixed term is
\begin{equation}\label{model-M00-def}
 \mathcal M_{0,0,\tau}
 =\frac\tau\pi\int\varphi(z_0)\int
 Q(x)e^{-i\tau\psi_{z_0}(x)}U_0(x)\widetilde U_0(x)
 \,\dd^2x\,\dd^2z_0.
\end{equation}
Here the outputs are $u=s$ and $v=t$, hence
\begin{equation}\label{model-M00-center}
 c=-x+s+t.
\end{equation}
The exact quadratic identity is
\begin{equation}\label{model-M00-normal-form}
 -\psi_{z_0}(x)+\psi_{z_0}(s)+\psi_{z_0}(t)
 =\psi_c(z_0)-2\operatorname{Re}((x-s)(x-t)).
\end{equation}
The kernel without the two terminal differences is
\[
 \frac{Q(x)\chi(s)\chi(t)}
 {\pi^2D_\partial(x,s)D_{\bar\partial}(x,t)}.
\]
Expanding $(A(s)-A(z_0))(\widetilde A(t)-\widetilde A(z_0))$ and averaging the four terms gives
\begin{equation}\label{model-mixed-bracket}\aliaslabel{rough-mixed-born-bracket}
\begin{aligned}
 \mathcal E_\tau(c,s,t)
 &=A(s)\widetilde A(t)E_\tau\varphi(c)
   -A(s)E_\tau(\varphi\widetilde A)(c)\\
 &\quad-\widetilde A(t)E_\tau(\varphi A)(c)
   +E_\tau(\varphi A\widetilde A)(c).
\end{aligned}
\end{equation}
The formal replacement $E_\tau\mapsto I$ gives
\[
 \varphi(c)(A(s)-A(c))(\widetilde A(t)-\widetilde A(c)).
\]
This amplitude is independent of $\tau$ and is not small pointwise. Once it is known to be integrable, the residual quadratic oscillation makes its integral tend to zero.

To separate that amplitude from the averaging errors, set
\begin{equation}\label{model-errors-def}
 \begin{aligned}
 H_1^\tau&=E_\tau\varphi-\varphi,\\
 H_A^\tau&=E_\tau(\varphi A)-\varphi A,\\
 H_{\widetilde A}^\tau&=E_\tau(\varphi\widetilde A)-\varphi\widetilde A,
 \end{aligned}
\end{equation}
\begin{equation}\label{model-product-error-def}\aliaslabel{rough-center-errors-def}
 H_{A\widetilde A}^\tau
 =E_\tau(\varphi A\widetilde A)-\varphi A\widetilde A.
\end{equation}
Then the exact decomposition is
\begin{equation}\label{model-mixed-bracket-decomp}\aliaslabel{rough-mixed-born-bracket-decomp}
\begin{aligned}
 \mathcal E_\tau(c,s,t)
 &=\varphi(c)(A(s)-A(c))(\widetilde A(t)-\widetilde A(c))\\
 &\quad+A(s)\widetilde A(t)H_1^\tau(c)
   -A(s)H_{\widetilde A}^\tau(c)\\
 &\quad-\widetilde A(t)H_A^\tau(c)+H_{A\widetilde A}^\tau(c).
\end{aligned}
\end{equation}
This decomposition is independent of the Born orders. Every additional block changes only the spatial kernel, the signed center, and the residual phase. It introduces no new center input: the four functions in \eqref{model-mixed-bracket} remain the same.

\paragraph{The estimates required by the bracket.}
The first term in \eqref{model-mixed-bracket-decomp} will have an $L^1$ amplitude. The next three terms use
\[
 \|H_1^\tau\|_\infty\longrightarrow0,
 \qquad \|H_A^\tau\|_2+\|H_{\widetilde A}^\tau\|_2\longrightarrow0,
\]
which follow from Lemma~\ref{lem-Etau} and $A,\widetilde A\in L^r$, $r>2$. To pair these errors with the spatial kernel, integrate the absolute kernel over all variables except the signed center. The required spaces for the resulting positive densities are
\begin{center}
\renewcommand{\arraystretch}{1.18}
\begin{tabular}{@{}lll@{}}
\toprule
Terminal weights & Density space at $c$ & Center factor \\
\midrule
$|A(s)|\,|\widetilde A(t)|$ & $L^1$ & $H_1^\tau$ \\
$|A(s)|$ & $L^2$ & $H_{\widetilde A}^\tau$ \\
$|\widetilde A(t)|$ & $L^2$ & $H_A^\tau$ \\
\midrule
$1$ & $L^{(r/2)'}$ & $\varphi A\widetilde A$ \\
\multicolumn{3}{@{}l@{}}{\footnotesize Last row: absolute integrability only.} \\
\bottomrule
\end{tabular}
\end{center}
Lemma~\ref{lem-mixed-center-densities} proves these bounds, and in fact every finite density exponent in the unweighted case. The same bounds prove integrability of the principal amplitude: expand the product of terminal--center differences and pair each placement of $A$ and $\widetilde A$ with the appropriate row.

For the last row only integrability is available, since we know only
\begin{equation}\label{model-product-space}
 \varphi A\widetilde A\in L^{r/2},
 \qquad r=\frac{2p}{2-p},
\end{equation}
and $r/2<2$ when $1<p<4/3$. Section~\ref{subsec-product-error} treats the corresponding error by transposing the average.

\subsection{A positive-order branch: terminal and output}

One inserted block makes the distinction between terminal and output explicit. Suppressing only the universal normalization constant, the left branch is
\begin{multline}\label{model-U1-expanded}
 U_1(x)=C\int e^{i\tau\psi_{z_0}(\lambda)
 -i\tau\psi_{z_0}(\eta)+i\tau\psi_{z_0}(s)}
 \chi(\lambda)q(\eta)\chi(s)(A(s)-A(z_0))\\
 \times\frac{1}
 {D_\partial(x,\lambda)D_{\bar\partial}(\lambda,\eta)
 D_\partial(\eta,s)}\,\dd^2s\,\dd^2\eta\,\dd^2\lambda.
\end{multline}
The terminal is $s$, whereas the output is
\begin{equation}\label{model-left-output}
 u=\lambda-\eta+s.
\end{equation}
Completion of the square gives
\begin{equation}\label{model-left-phase}
 \psi_{z_0}(\lambda)-\psi_{z_0}(\eta)+\psi_{z_0}(s)
 =\psi_u(z_0)+2\operatorname{Re}\bigl((\lambda-\eta)(\eta-s)\bigr).
\end{equation}
Consequently,
\begin{multline}\label{model-U1-output-integral}
 \mathcal B_{1,\tau}
 =C\int e^{2i\tau\operatorname{Re}((\lambda-\eta)(\eta-s))}
 \frac{Q(x)\chi(\lambda)q(\eta)\chi(s)}
 {D_\partial(x,\lambda)D_{\bar\partial}(\lambda,\eta)D_\partial(\eta,s)}\\
 \times\bigl[A(s)E_\tau\varphi(u)-E_\tau(\varphi A)(u)\bigr]
 \,\dd^2x\,\dd^2\lambda\,\dd^2\eta\,\dd^2s.
\end{multline}
The one-sided counterpart of \eqref{model-mixed-bracket-decomp} is
\begin{equation}\label{model-one-sided-bracket}\aliaslabel{rough-left-born-bracket-decomp}\aliaslabel{fixed-left-signed-output-bracket}
\begin{aligned}
 A(s)E_\tau\varphi(u)-E_\tau(\varphi A)(u)
 &=\varphi(u)(A(s)-A(u))\\
 &\quad+A(s)H_1^\tau(u)-H_A^\tau(u).
\end{aligned}
\end{equation}
At positive order the integral of the first term tends to zero by the residual quadratic oscillation. At order zero, $u=s$ and it vanishes identically. The other two terms require only a weighted $L^1$ output density and an unweighted $L^2$ output density. Lemma~\ref{lem-one-sided-output-densities} gives these estimates.

\subsection{Exact branches and phase decomposition}

Insert $j$ blocks in the left branch and $k$ blocks in the right branch. We now give their exact kernels, using \eqref{cauchy-denominator-conventions}. The terminal factor records the normalization; the signed output is determined independently by the phase. Section~\ref{subsubsec-direct-rough-identities} justifies the expansions for $L^p$ coefficients.

\begin{dfn}\label{dfn-exact-born-branches}
\textit{Recursion.}
The left and right Born branches are defined recursively by
\begin{align}
        \mathfrak L_0^{z_0,\tau}[H]
        &=\mathcal C_{\tau,z_0}^{+,\partial}(\chi H),
        \label{exact-left-branch-0}\\
        \mathfrak L_{n+1}^{z_0,\tau}[H]
        &=\mathcal C_{\tau,z_0}^{+,\partial}
          \left(\chi\,
          \mathcal C_{\tau,z_0}^{-,\bar\partial}
          \bigl(q\,\mathfrak L_n^{z_0,\tau}[H]\bigr)\right),
        \label{exact-left-branch-rec}\\
        \mathfrak R_0^{z_0,\tau}[H]
        &=\mathcal C_{\tau,z_0}^{+,\bar\partial}(\chi H),
        \label{exact-right-branch-0}\\
        \mathfrak R_{n+1}^{z_0,\tau}[H]
        &=\mathcal C_{\tau,z_0}^{+,\bar\partial}
          \left(\chi\,
          \mathcal C_{\tau,z_0}^{-,\partial}
          \bigl(\widetilde q\,\mathfrak R_n^{z_0,\tau}[H]\bigr)\right).
        \label{exact-right-branch-rec}
\end{align}
Consequently, for smooth potentials,
\begin{equation}\label{branch-identities-Uj}
        U_j=\mathfrak L_j^{z_0,\tau}[A-A(z_0)],
        \qquad
        \widetilde U_k=
        \mathfrak R_k^{z_0,\tau}[\widetilde A-\widetilde A(z_0)] .
\end{equation}

\par\smallskip\noindent\textit{Left branch.}
Put $\eta_0=x$ and use integration variables
\[
 (\lambda_1,\eta_1),\ldots,(\lambda_j,\eta_j),s .
\]
Its terminal variable is $s$, its signed output is
\begin{equation}\label{exact-left-output}\aliaslabel{model-general-outputs}
        u_j=s+\sum_{\ell=1}^j(\lambda_\ell-\eta_\ell),
\end{equation}
and its phase is
\begin{equation}\label{exact-left-phase}
        \Phi_j^L(z_0)
        =\sum_{\ell=1}^j
          \bigl(\psi_{z_0}(\lambda_\ell)-\psi_{z_0}(\eta_\ell)\bigr)
          +\psi_{z_0}(s).
\end{equation}
After removing the terminal factor $H(s)$ and the oscillation, the exact branch
kernel is
\begin{equation}\label{exact-left-kernel}
\begin{aligned}
\mathcal K_j^L(x;\lambda,\eta,s)
&=\pi^{-(2j+1)}\chi(s)
  \prod_{\ell=1}^j\chi(\lambda_\ell)q(\eta_\ell)\\
&\quad\times
  \frac{1}{D_\partial(\eta_j,s)}
  \prod_{\ell=1}^j
  \frac{1}{D_\partial(\eta_{\ell-1},\lambda_\ell)
          D_{\bar\partial}(\lambda_\ell,\eta_\ell)} .
\end{aligned}
\end{equation}
Here and below the evident interpretation is used when $j=0$: the products are
empty and $\eta_j=\eta_0=x$.  Thus
\begin{equation}\label{exact-left-expanded}
\mathfrak L_j^{z_0,\tau}[H](x)
 =\int e^{i\tau\Phi_j^L(z_0)}
       \mathcal K_j^L(x;\lambda,\eta,s)H(s)\,\dd\Xi_j^L,
\end{equation}
where $\dd\Xi_j^L=\dd^2s\prod_{\ell=1}^j\dd^2\eta_\ell\,\dd^2\lambda_\ell$.

\par\smallskip\noindent\textit{Right branch.}
Put $\nu_0=x$ and use variables
\[
 (\mu_1,\nu_1),\ldots,(\mu_k,\nu_k),t .
\]
Its terminal variable is $t$, its signed output is
\begin{equation}\label{exact-right-output}
        v_k=t+\sum_{m=1}^k(\mu_m-\nu_m),
\end{equation}
and its phase is
\begin{equation}\label{exact-right-phase}
        \Phi_k^R(z_0)
        =\sum_{m=1}^k
          \bigl(\psi_{z_0}(\mu_m)-\psi_{z_0}(\nu_m)\bigr)
          +\psi_{z_0}(t).
\end{equation}
The exact branch kernel is
\begin{equation}\label{exact-right-kernel}
\begin{aligned}
\mathcal K_k^R(x;\mu,\nu,t)
&=\pi^{-(2k+1)}\chi(t)
  \prod_{m=1}^k\chi(\mu_m)\widetilde q(\nu_m)\\
&\quad\times
  \frac{1}{D_{\bar\partial}(\nu_k,t)}
  \prod_{m=1}^k
  \frac{1}{D_{\bar\partial}(\nu_{m-1},\mu_m)
          D_\partial(\mu_m,\nu_m)} ,
\end{aligned}
\end{equation}
and
\begin{equation}\label{exact-right-expanded}
\mathfrak R_k^{z_0,\tau}[H](x)
 =\int e^{i\tau\Phi_k^R(z_0)}
       \mathcal K_k^R(x;\mu,\nu,t)H(t)\,\dd\Xi_k^R,
\end{equation}
where $\dd\Xi_k^R=\dd^2t\prod_{m=1}^k\dd^2\nu_m\,\dd^2\mu_m$.
\end{dfn}

Lemma~\ref{lem-signed-quadratic-normal-form}, applied to
the signs $+,-,\ldots,+,-,+$ gives
\begin{align}
        \Phi_j^L(z_0)
        &=\psi_{z_0}(u_j)+P_j^L,
        \label{exact-left-phase-split}\\
        \Phi_k^R(z_0)
        &=\psi_{z_0}(v_k)+P_k^R,
        \label{exact-right-phase-split}
\end{align}
where
\begin{align}
P_j^L
&=\operatorname{Re}\left(
  \sum_{\ell=1}^j\lambda_\ell^2
 -\sum_{\ell=1}^j\eta_\ell^2+s^2-u_j^2\right),
\label{exact-left-residual-phase}\\
P_k^R
&=\operatorname{Re}\left(
  \sum_{m=1}^k\mu_m^2
 -\sum_{m=1}^k\nu_m^2+t^2-v_k^2\right).
\label{exact-right-residual-phase}
\end{align}
The residual $P_j^L$ is independent of $z_0$ and nondegenerate on every fiber
$u_j=\mathrm{const}$; the same holds for $P_k^R$ on the fibers of $v_k$.

For a mixed term of order $(j,k)$ the two branches meet at the root variable
$x$. Write $\Xi=(x,\lambda,\eta,s,\mu,\nu,t)$ for the complete list of variables. Removing the two terminal differences and the oscillation gives the kernel
\begin{equation}\label{exact-mixed-kernel}
        \mathcal K_{j,k}^{\rm mix}(\Xi)
        =Q(x)\mathcal K_j^L(x;\lambda,\eta,s)
              \mathcal K_k^R(x;\mu,\nu,t),
\end{equation}
and the pre-averaged phase is
\begin{equation}\label{exact-mixed-phase}
        \Theta_{j,k}(z_0;\Xi)
        =-\psi_{z_0}(x)+\Phi_j^L(z_0)+\Phi_k^R(z_0).
\end{equation}
The global signed center is
\begin{equation}\label{exact-mixed-center}
        c_{j,k}=-x+u_j+v_k
        =-x+\sum_{\ell=1}^j(\lambda_\ell-\eta_\ell)+s
           +\sum_{m=1}^k(\mu_m-\nu_m)+t .
\end{equation}
At the core one has
\begin{equation}\label{exact-core-phase-split}
-\psi_{z_0}(x)+\psi_{z_0}(u_j)+\psi_{z_0}(v_k)
 =\psi_{z_0}(c_{j,k})+P^{\rm core}(x,u_j,v_k),
\end{equation}
where
\begin{equation}\label{exact-core-residual-phase}
P^{\rm core}(x,u,v)
 =\operatorname{Re}\bigl(-x^2+u^2+v^2-(-x+u+v)^2\bigr).
\end{equation}
Combining the two branch identities with the core identity yields the
hierarchical phase decomposition
\begin{equation}\label{exact-mixed-phase-split}
        \Theta_{j,k}(z_0;\Xi)
        =\psi_{z_0}(c_{j,k})+P_{j,k}(\Xi),
        \qquad
        P_{j,k}=P_j^L+P_k^R+P^{\rm core}(x,u_j,v_k).
\end{equation}
The residual is independent of $z_0$.  Its nondegeneracy on each fiber
$c_{j,k}=\mathrm{const}$ follows by applying
Lemma~\ref{lem-signed-quadratic-normal-form} once to the full signed list of
variables in \eqref{exact-mixed-phase}.

For later reference, when $j=k=1$ and
$(\lambda,\eta)=(\lambda_1,\eta_1)$,
$(\mu,\nu)=(\mu_1,\nu_1)$, formula \eqref{exact-mixed-kernel} reads
\begin{align}
\mathcal K_{1,1}^{\rm mix}
&=\pi^{-6}Q(x)\chi(s)\chi(t)\chi(\lambda)q(\eta)
  \chi(\mu)\widetilde q(\nu)                                  \notag\\
&\quad\times
\frac{1}{D_\partial(\eta,s)D_\partial(x,\lambda)
        D_{\bar\partial}(\lambda,\eta)}
\frac{1}{D_{\bar\partial}(\nu,t)D_{\bar\partial}(x,\mu)
        D_\partial(\mu,\nu)} .                       \label{exact-mixed-kernel-11}
\end{align}
The corresponding phase and center are
\begin{align}
\Theta_{1,1}
&=-\psi_{z_0}(x)+\psi_{z_0}(\lambda)-\psi_{z_0}(\eta)
  +\psi_{z_0}(s)+\psi_{z_0}(\mu)-\psi_{z_0}(\nu)
  +\psi_{z_0}(t),                                      \label{exact-mixed-phase-11}\\
 c_{1,1}
&=-x+\lambda-\eta+s+\mu-\nu+t .                         \label{exact-mixed-center-11}
\end{align}

\section{Positive densities and fixed-order identities}\label{sec:positive-densities}

We prove the estimates announced by the averaged brackets. The densities below integrate out every variable except a branch output or the mixed signed center. They retain the absolute kernel and a specified terminal weight, but no oscillation. Their purpose is to bound a function evaluated at the output in its own Lebesgue norm. All constants may depend on the fixed orders; no estimate uniform in the Born order is needed here.

\subsection{Push-forward to the branch output}
\label{subsubsec-positive-output-densities}

For a nonnegative terminal weight $F$, the density $\Gamma_j^F(x,u)$ is characterized by the push-forward identity \eqref{exact-left-output-pushforward}: integrating $h(u)$ against it gives the absolute branch integral with factors $F(s)h(u_j)$. The following recursion constructs that density directly. The shift in its output variable is the same shift contributed by an inserted block to the phase.

Let $F:\C\to[0,\infty]$ be measurable.  Define the positive left-output
density recursively by
\begin{align}
\Gamma_0^F(x,u)
&=\pi^{-1}k_R(x-u)|\chi(u)|F(u),
\label{positive-left-output-0}\\
\Gamma_{j+1}^F(x,u)
&=\pi^{-2}\iint
 k_R(x-\lambda)|\chi(\lambda)|
 k_R(\lambda-\eta)|q(\eta)| \notag\\
&\qquad\qquad\times
 \Gamma_j^F(\eta,u-\lambda+\eta)
 \dd^2\lambda\dd^2\eta .
\label{positive-left-output-rec}
\end{align}
The translated output $u-\lambda+\eta$ is forced by the identity
\[
 u=(\lambda-\eta)+u_{\mathrm{inner}}.
\]
The positive right-output density is defined analogously by
\begin{align}
\widetilde\Gamma_0^G(x,v)
&=\pi^{-1}k_R(x-v)|\chi(v)|G(v),
\label{positive-right-output-0}\\
\widetilde\Gamma_{k+1}^G(x,v)
&=\pi^{-2}\iint
 k_R(x-\mu)|\chi(\mu)|
 k_R(\mu-\nu)|\widetilde q(\nu)| \notag\\
&\qquad\qquad\times
 \widetilde\Gamma_k^G(\nu,v-\mu+\nu)
 \dd^2\mu\dd^2\nu .
\label{positive-right-output-rec}
\end{align}
After absolute values the two Cauchy orientations give the same localized
kernel, which is why the recursions have the same form.

\begin{lem}
\label{lem-exact-output-pushforward}
For every $j,k\ge0$, all nonnegative measurable terminal weights $F,G$, every
nonnegative Borel function $h$, and every root $x$ in the fixed compact set $X_{j,k}$ chosen above, which
contains all possible branch roots and $\operatorname{supp}Q$, one has
\begin{align}
\int h(u)\Gamma_j^F(x,u)\dd^2u
&=\int
 |\mathcal K_j^L(x;\lambda,\eta,s)|F(s)h(u_j)
 \dd\Xi_j^L,
\label{exact-left-output-pushforward}\\
\int h(v)\widetilde\Gamma_k^G(x,v)\dd^2v
&=\int
 |\mathcal K_k^R(x;\mu,\nu,t)|G(t)h(v_k)
 \dd\Xi_k^R.
\label{exact-right-output-pushforward}
\end{align}
The identities are understood in $[0,\infty]$.  In particular,
$\Gamma_j^F(x,\cdot)\dd^2u$ and
$\widetilde\Gamma_k^G(x,\cdot)\dd^2v$ are exactly the push-forwards of
the corresponding absolute branch measures under the signed-output maps
\eqref{exact-left-output} and \eqref{exact-right-output}.
\end{lem}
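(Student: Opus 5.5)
We argue by induction on the Born order, treating only the left identity \eqref{exact-left-output-pushforward}; the right identity \eqref{exact-right-output-pushforward} is identical with $(\mu,\nu,t,\widetilde q,G)$ in place of $(\lambda,\eta,s,q,F)$. Since every integrand is nonnegative and measurable, Tonelli's theorem applies throughout with values in $[0,\infty]$, and no integrability issue arises. We use only two facts: $|D_\partial(x,y)|=|D_{\bar\partial}(x,y)|=|x-y|$, and on the fixed compact set $X_{j,k}$ every difference of variables lies in $\{|z|\le R\}$. Hence each factor $1/|D(\cdot,\cdot)|$ may be replaced by $k_R$ there; this is the meaning of the standing localization convention.

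\emph{Base case $j=0$.} Here $u_0=s$, and $|\mathcal K_0^L(x;s)|=\pi^{-1}|\chi(s)|\,|x-s|^{-1}=\pi^{-1}k_R(x-s)|\chi(s)|$. Thus the right side of \eqref{exact-left-output-pushforward} is $\int\pi^{-1}k_R(x-s)|\chi(s)|F(s)h(s)\,\dd^2s$. Renaming $s=u$, this is $\int h(u)\Gamma_0^F(x,u)\,\dd^2u$ by \eqref{positive-left-output-0}.

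\emph{Inductive step.} From \eqref{exact-left-kernel} we read off the factorization
\[
|\mathcal K_{j+1}^L(x;\lambda,\eta,s)|
=\pi^{-2}k_R(x-\lambda_1)|\chi(\lambda_1)|\,k_R(\lambda_1-\eta_1)|q(\eta_1)|\;
|\mathcal K_j^L(\eta_1;\lambda',\eta',s)|,
\]
where $(\lambda',\eta')$ denotes the variables $(\lambda_2,\eta_2),\dots,(\lambda_{j+1},\eta_{j+1})$ relabelled as $1,\dots,j$, and $\eta_1$ plays the role of the root $\eta_0$ of the inner branch. The output splits in the same way: $u_{j+1}=(\lambda_1-\eta_1)+u_j'$, where $u_j'$ is the inner output. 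By Tonelli we first integrate over the inner variables with $(\lambda_1,\eta_1)$ fixed. We apply the induction hypothesis at root $\eta_1$ with the shifted test function $h_{\lambda_1,\eta_1}(w)=h(w+\lambda_1-\eta_1)$, which is again nonnegative and Borel. This gives
\[
\int h(w+\lambda_1-\eta_1)\,\Gamma_j^F(\eta_1,w)\,\dd^2w
=\int h(u)\,\Gamma_j^F(\eta_1,u-\lambda_1+\eta_1)\,\dd^2u,
\]
after the translation $u=w+\lambda_1-\eta_1$. We then integrate in $(\lambda_1,\eta_1)$ and exchange the order with $u$ by Tonelli. The inner $(\lambda_1,\eta_1)$ integral is exactly \eqref{positive-left-output-rec}, which yields $\int h(u)\Gamma_{j+1}^F(x,u)\,\dd^2u$.

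\emph{Main obstacle.} The only delicate point is legitimacy of the induction hypothesis at the intermediate root $\eta_1$. The hypothesis is stated for roots in $X_{j,k}$, and $\eta_1\in\operatorname{supp}q\subset\overline\Omega\subset X_{j,k}$ wherever $q(\eta_1)\neq0$. Off that support the integrand vanishes, so the hypothesis is needed only there. Likewise the replacement of $1/|D|$ by $k_R$ is valid because all the variables lie in the support of $\chi$, $q$, or $Q$, which is contained in $X_{j,k}$. Joint measurability of $\Gamma_j^F$ also follows inductively, since each step is a parametrized integral of nonnegative measurable functions composed with affine maps. The push-forward statement is then the case $h=\mathbf 1_E$ for Borel sets $E$.
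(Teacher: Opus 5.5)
Your proof is correct and follows the same induction as the paper: the base case uses $u_0=s$, and the inductive step factors off the outermost block $(\lambda_1,\eta_1)$, applies the induction hypothesis at the inner root $\eta_1$ to the translated test function $w\mapsto h(w+\lambda_1-\eta_1)$, and recognizes the recursion \eqref{positive-left-output-rec}. The only differences are that you run the computation from the kernel side toward the density side rather than the reverse, and that you state explicitly two points the paper leaves implicit: the joint measurability of $\Gamma_j^F$, and why the intermediate root $\eta_1$ may be taken in the fixed compact set.
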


\begin{proof}
We prove \eqref{exact-left-output-pushforward}; the right-hand identity is the
same after replacing $(\lambda,\eta,q)$ by
$(\mu,\nu,\widetilde q)$.  For $j=0$, the assertion follows immediately
from \eqref{positive-left-output-0}, since $u_0=s$ and
\[
 |\mathcal K_0^L(x;s)|=\pi^{-1}k_R(x-s)|\chi(s)|
\]
on the fixed compact set under consideration.

Assume the identity at order $j$.  Tonelli's theorem and
\eqref{positive-left-output-rec} give
\begin{align*}
\int h(u)\Gamma_{j+1}^F(x,u)\dd^2u
&=\pi^{-2}\iiint
 k_R(x-\lambda)|\chi(\lambda)|
 k_R(\lambda-\eta)|q(\eta)|\,\notag\\
&\qquad\qquad\times
 h(u)\Gamma_j^F(\eta,u-\lambda+\eta)
 \dd^2u\dd^2\lambda\dd^2\eta.
\end{align*}
Set $w=u-\lambda+\eta$.  Translation invariance of Lebesgue measure changes
the inner integral to
\[
 \int h(w+\lambda-\eta)\Gamma_j^F(\eta,w)\dd^2w.
\]
Apply the induction hypothesis at the root $\eta$ to $w\mapsto h(w+\lambda-\eta)$.  The inner branch has output $w$, and the new outer block contributes $\lambda-\eta$.  Their sum is
\[
 w+\lambda-\eta
 =s+\sum_{\ell=1}^{j+1}(\lambda_\ell-\eta_\ell)
 =u_{j+1},
\]
after reindexing the inner variables.  The two new localized Cauchy kernels, together with $|\chi(\lambda)q(\eta)|$ and $\pi^{-2}$, form exactly the absolute value of the new outer block in \eqref{exact-left-kernel}.  This proves \eqref{exact-left-output-pushforward} at order $j+1$.
\end{proof}

These identities use no oscillatory cancellation. All roots arising in the induction, and in particular the roots used in the
one-sided and mixed Born integrals, lie in the indicated compact set; the densities themselves are defined for
all $x\in\C$, which is convenient for the uniform norm estimates below.

\subsection{Born insertion preserves the output exponents}

For $1\le a<\infty$ set
\begin{equation}\label{branch-output-Ma}
 \mathcal N_a(j,F)=\sup_{x\in\C}
        \|\Gamma_j^F(x,\cdot)\|_{L^a(\C)},
\end{equation}
with the analogous notation $\widetilde{\mathcal N}_a(k,G)$ for the right
branch.  For example, the first nontrivial left density is
\[
 \Gamma_1^F(x,u)=\pi^{-2}\iint
 k_R(x-\lambda)|\chi(\lambda)|k_R(\lambda-\eta)|q(\eta)|
 \Gamma_0^F(\eta,u-\lambda+\eta)
 \dd^2\lambda\dd^2\eta.
\]

\begin{lem}
\label{lem-branch-output-bounds}
Let $1<p<2$ and
\[
        r=\frac{2p}{2-p}.
\]
For every fixed $j,k\ge0$ the following estimates hold.
\begin{align}
 \mathcal N_a(j,1)
 &\le C_{j,a}\|q\|_{L^p}^j,
 &&1\le a<2,                                      \label{branch-output-unweighted-left}\\
 \widetilde{\mathcal N}_a(k,1)
 &\le C_{k,a}\|\widetilde q\|_{L^p}^k,
 &&1\le a<2,                                      \label{branch-output-unweighted-right}\\
 \mathcal N_a(j,F)
 &\le C_{j,a}\|q\|_{L^p}^j\|F\|_{L^r},
 &&F\in L^r(\C),\quad 1\le a<p,                 \label{branch-output-weighted-left}\\
 \widetilde{\mathcal N}_a(k,G)
 &\le C_{k,a}\|\widetilde q\|_{L^p}^k\|G\|_{L^r},
 &&G\in L^r(\C),\quad 1\le a<p.                \label{branch-output-weighted-right}
\end{align}
For each fixed order, the supports of
$\Gamma_j^F(x,\cdot)$ and $\widetilde\Gamma_k^G(x,\cdot)$ are contained in
fixed compact sets, uniformly in $x$ and in the terminal weights.  The constants
depend on the displayed exponents, the fixed order, $R$, and the fixed cutoffs,
but not on $x,F,G$.
\end{lem}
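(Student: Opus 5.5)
We argue by induction on the order, treating only the left branch since the right recursion has the same form.

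\emph{Base case.} Since $\Gamma_0^F(x,u)=\pi^{-1}k_R(x-u)|\chi(u)|F(u)$, the support in $u$ lies in $\operatorname{supp}\chi$, uniformly in $x$.
\begin{itemize}
\item For $F=1$, the bound $\|\Gamma_0^1(x,\cdot)\|_{L^a}\le C\|k_R\|_{L^a}$ is finite for $a<2$ by \eqref{kernel-basic-facts-general}, and translation invariance makes it uniform in $x$.
\item For $F\in L^r$ and $a<p$, Hölder with $1/a=1/b+1/r$ gives $\|k_R(x-\cdot)F\|_{L^a}\le\|k_R\|_{L^b}\|F\|_{L^r}$. Here $1/b=1/a-1/r>1/p-1/r=1/2$, so $b<2$ and $\|k_R\|_{L^b}<\infty$.
\end{itemize}
This is exactly where the threshold $a<p$ enters.

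\emph{Inductive step.} Fix $x$ and apply Minkowski's integral inequality to \eqref{positive-left-output-rec} in the $u$ variable. For fixed $(\lambda,\eta)$, the map $u\mapsto\Gamma_j^F(\eta,u-\lambda+\eta)$ is a translate of $\Gamma_j^F(\eta,\cdot)$. Its $L^a$ norm is therefore at most $\mathcal N_a(j,F)$, independently of $\lambda,\eta$. Hence
\[
\|\Gamma_{j+1}^F(x,\cdot)\|_{L^a}\le\pi^{-2}\mathcal N_a(j,F)\iint k_R(x-\lambda)|\chi(\lambda)|k_R(\lambda-\eta)|q(\eta)|\,\dd^2\lambda\,\dd^2\eta .
\]
The remaining double integral is bounded by $\|\chi\|_\infty\,(K_R*|q|)(x)$. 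Lemma~\ref{lem-localized-kernel-mapping}, i.e.\ $K_R\in L^{p'}$ together with Young's inequality, bounds this by $C\|q\|_{L^p}$ uniformly in $x\in\C$. This gives $\mathcal N_a(j+1,F)\le C\|q\|_{L^p}\mathcal N_a(j,F)$, and iterating from the base case yields \eqref{branch-output-unweighted-left} and \eqref{branch-output-weighted-left}. The exponent $a$ never changes: the inserted block acts only through the scalar bound $\sup_x K_R*|q|$, which is precisely the mechanism singled out in the introduction, where $p>1$ is used.

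\emph{Supports.} In $\Gamma_{j+1}^F$, the nonzero contributions have $\lambda\in\operatorname{supp}\chi$ and $\eta\in\operatorname{supp}q\subset\overline\Omega$. The inner density is nonzero only when $u-\lambda+\eta$ lies in the compact support set $S_j$ of order $j$. So $u$ lies in $S_j+\operatorname{supp}\chi-\overline\Omega$, a compact set independent of $x$ and $F$. Induction gives compact support sets $S_j$ depending only on the order and the cutoffs.

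The only point I expect to need care is measurability and finiteness of the densities. Measurability follows from Tonelli, since all integrands are nonnegative. Finiteness is not needed beforehand, because the Minkowski bound holds in $[0,\infty]$.
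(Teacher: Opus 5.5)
Your proposal is correct and follows the paper's proof essentially step for step. It uses the same two base cases, with the H\"older split $1/a=1/b+1/r$ forcing $b<2$ exactly when $a<p$; note also that $a\ge1$ gives $b>1$, which is implicitly needed for $k_R\in L^b$. It then uses the same Minkowski-plus-translation recursion $\mathcal N_a(j+1,F)\le C\|K_R\|_{L^{p'}}\|q\|_{L^p}\mathcal N_a(j,F)$ and the same inductive support argument.
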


\begin{proof}
We prove the two left-branch estimates.  The right branch is identical after
replacing $q$ by $\widetilde q$.

First note the support assertion.  At order zero the factor $\chi(u)$ confines
the output to $\operatorname{supp}\chi$.  If the order-$j$ output is supported
in a compact set $Y_j$, then a nonzero term in
\eqref{positive-left-output-rec} has
\[
        u-\lambda+\eta\in Y_j,
        \qquad \lambda\in\operatorname{supp}\chi,
        \qquad \eta\in\operatorname{supp}q.
\]
Hence
\[
        u\in Y_j+\operatorname{supp}\chi-\operatorname{supp}q,
\]
which is again compact and independent of $x$ and $F$.

Let $1\le a<\infty$.  Minkowski's integral inequality, translation invariance
of the $L^a$ norm, and \eqref{positive-left-output-rec} give
\begin{align*}
 \|\Gamma_{j+1}^F(x,\cdot)\|_{L^a}
 &\le \pi^{-2}\mathcal N_a(j,F)
 \iint k_R(x-\lambda)|\chi(\lambda)|
        k_R(\lambda-\eta)|q(\eta)|
        \dd^2\lambda\dd^2\eta \\
 &\le C\mathcal N_a(j,F)(K_R*|q|)(x).
\end{align*}
Since $p>1$, one has $p'<\infty$, and
$K_R\in L^{p'}(\C)$ by \eqref{kernel-basic-facts-general}.  Therefore
\begin{equation}\label{branch-output-recursive-bound}
 \mathcal N_a(j+1,F)
 \le C\|K_R\|_{L^{p'}}\|q\|_{L^p}\mathcal N_a(j,F).
\end{equation}

For the unweighted base case, if $1\le a<2$, then
\begin{equation}\label{branch-output-base-unweighted}
 \mathcal N_a(0,1)
 \le \pi^{-1}\|\chi\|_{L^\infty}\|k_R\|_{L^a}<\infty.
\end{equation}
Iteration of \eqref{branch-output-recursive-bound} proves
\eqref{branch-output-unweighted-left}.

Now let $F\in L^r$ and $1\le a<p$.  Choose $\gamma$ by
\begin{equation}\label{branch-output-gamma-choice}
        \frac1a=\frac1\gamma+\frac1r.
\end{equation}
Since
\[
        \frac1p=\frac12+\frac1r,
\]
the condition $a<p$ is equivalent to $1/\gamma>1/2$, while $a\ge1$ implies
$1/\gamma<1$.  Thus $1<\gamma<2$, so $k_R\in L^\gamma$.  H\"older's inequality
in the output variable yields
\begin{equation}\label{branch-output-base-weighted}
 \mathcal N_a(0,F)
 \le \pi^{-1}\|\chi\|_{L^\infty}
       \|k_R\|_{L^\gamma}\|F\|_{L^r}.
\end{equation}
Combining this with \eqref{branch-output-recursive-bound} gives
\eqref{branch-output-weighted-left}.
\end{proof}

In particular, taking $a=1$ gives the absolute branch-mass bounds
\begin{align}
 \sup_x\int\Gamma_j^1(x,u)\dd^2u
 &\le C_j\|q\|_{L^p}^j,                              \label{branch-output-mass-unweighted}\\
 \sup_x\int\Gamma_j^F(x,u)\dd^2u
 &\le C_j\|q\|_{L^p}^j\|F\|_{L^r},                 \label{branch-output-mass-weighted}
\end{align}
and the analogous right-branch estimates.  The interval $1\le a<p$ is
nonempty precisely because $p>1$; at the endpoint $p=1$ the argument would also
require the false bound $K_R\in L^\infty$.

\subsection{Densities for the one-sided and mixed terms}

\paragraph{One-sided densities.}
\label{subsubsec-one-sided-output-densities}

For a one-sided term, integrate the root against $|Q|$. The weighted $L^1$ bound follows directly from the branch mass. The unweighted $L^2$ bound needs a further step: there is no second branch to convolve with, so the proof exposes the terminal Cauchy edge and uses its smoothing. For nonnegative terminal weights $F,G$, define
\begin{align}
 J_j^F(u)&=\int_{\C}|Q(x)|\Gamma_j^F(x,u)\dd^2x,
 \label{one-sided-left-density-def}\\
 \widetilde J_k^G(v)&=\int_{\C}|Q(x)|
             \widetilde\Gamma_k^G(x,v)\dd^2x.
 \label{one-sided-right-density-def}
\end{align}
By Lemma~\ref{lem-exact-output-pushforward}, these are exactly the positive
push-forwards of the absolute one-sided Born kernels to the signed output after
the root has been paired with $|Q|$.

\begin{lem}
\label{lem-one-sided-output-densities}
Let $1<p<2$ and $r=2p/(2-p)$.  For every fixed $j,k\ge0$ and every
nonnegative $F,G\in L^r(\C)$,
\begin{align}
 \|J_j^F\|_{L^1}
 &\le C_j\|Q\|_{L^p}\|q\|_{L^p}^j\|F\|_{L^r},
 \label{one-sided-left-weighted-L1}\\
 \|\widetilde J_k^G\|_{L^1}
 &\le C_k\|Q\|_{L^p}\|\widetilde q\|_{L^p}^k
       \|G\|_{L^r},
 \label{one-sided-right-weighted-L1}\\
 \|J_j^1\|_{L^2}
 &\le C_j\|Q\|_{L^p}\|q\|_{L^p}^j,
 \label{one-sided-left-unweighted-L2}\\
 \|\widetilde J_k^1\|_{L^2}
 &\le C_k\|Q\|_{L^p}\|\widetilde q\|_{L^p}^k.
 \label{one-sided-right-unweighted-L2}
\end{align}
Each density is supported in a fixed compact set depending only on its order
and on the fixed supports and cutoffs.
\end{lem}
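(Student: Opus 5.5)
The weighted $L^1$ bounds come straight from the branch masses. By Tonelli,
\[
 \|J_j^F\|_{L^1}=\int|Q(x)|\int\Gamma_j^F(x,u)\,\dd^2u\,\dd^2x
 \le\|Q\|_{L^1}\sup_x\int\Gamma_j^F(x,u)\,\dd^2u .
\]
The supremum is bounded by \eqref{branch-output-mass-weighted}. Since $Q$ is supported in the bounded set $\Omega$, we have $\|Q\|_{L^1}\le C\|Q\|_{L^p}$, which gives \eqref{one-sided-left-weighted-L1}. The right-hand estimate follows in the same way. The support claims are inherited from the support statement in Lemma~\ref{lem-branch-output-bounds}, because integrating in $x$ does not enlarge the $u$-support.

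For the unweighted $L^2$ bound I would not use the sup-in-$x$ norms $\mathcal N_a(j,1)$ directly, since these are only finite for $a<2$. The plan is to expose the edge from the root. I first treat $j=0$. Here $J_0^1(u)=\pi^{-1}|\chi(u)|\,(k_R*|Q|)(u)$, and Lemma~\ref{lem-localized-kernel-mapping} gives $k_R*|Q|\in L^r$ with $r>2$. On the compact support this yields an $L^2$ bound by $C\|Q\|_{L^p}$.

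For $j\ge1$ I unfold the recursion \eqref{positive-left-output-rec} once at the root:
\[
 J_{j}^1(u)=\pi^{-2}\iint (k_R*|Q|)(\lambda)\,|\chi(\lambda)|\,k_R(\lambda-\eta)|q(\eta)|\,
 \Gamma_{j-1}^1(\eta,u-\lambda+\eta)\,\dd^2\lambda\,\dd^2\eta .
\]
Set $g=|\chi|(k_R*|Q|)\in L^\infty\cap L^r_c$; the $L^\infty$ bound holds because $k_R*L^p\subset L^r$ and $k_R*L^r\subset L^\infty$. Substituting $\lambda=\eta+w$, the expression becomes
\[
 J_j^1(u)=\pi^{-2}\iint g(\eta+w)k_R(w)|q(\eta)|\,\Gamma_{j-1}^1(\eta,u-w)\,\dd^2w\,\dd^2\eta .
\]
For fixed $\eta$, the $w$-integral is a convolution in $u$ of $G_\eta(w):=g(\eta+w)k_R(w)$ with $\Gamma_{j-1}^1(\eta,\cdot)$.

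Minkowski's inequality in $\eta$ together with Young's inequality then give
\[
 \|J_j^1\|_{L^2}\le C\int|q(\eta)|\,\|G_\eta\|_{L^{b}}\,\|\Gamma_{j-1}^1(\eta,\cdot)\|_{L^a}\,\dd^2\eta,
 \qquad 1+\tfrac12=\tfrac1b+\tfrac1a .
\]
I choose $a<2$, so that \eqref{branch-output-unweighted-left} controls the last factor uniformly in $\eta$ by $C\|q\|_{L^p}^{j-1}$. This forces $b>1$, with $b$ arbitrarily close to $1$ as $a\to2$. Since $g\in L^\infty$ and $k_R\in L^b$ for $b<2$, we get $\|G_\eta\|_{L^b}\le\|g\|_\infty\|k_R\|_{L^b}\le C\|Q\|_{L^p}$ uniformly in $\eta$. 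The remaining $\int|q|$ is bounded by $C\|q\|_{L^p}$, which proves \eqref{one-sided-left-unweighted-L2}. In fact only the terminal/root smoothing is used, so any $L^\kappa$ bound with $\kappa<\infty$ should follow the same way.

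The main obstacle is the exponent bookkeeping in this Young step. The naive route, putting $|Q|$ against $\sup_x$ of the $L^2$ norm of $\Gamma$, fails because $k_R\notin L^2$. The point of the argument is that the convolution with $|Q|$ at the root, via $k_R*L^p\subset L^r\subset$ (after one more $k_R$) $L^\infty$, buys boundedness, and this is where $p>1$ enters. If the $L^\infty$ bound on $g$ turns out to be awkward, I would fall back to $g\in L^r$ and apply H\"older to $G_\eta$ with $k_R\in L^\gamma$, $\gamma<2$, adjusting $a$ accordingly. The right branch is identical with $(\mu,\nu,\widetilde q)$.
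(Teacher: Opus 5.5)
Your weighted $L^1$ bounds, the support statement, and the order-zero $L^2$ bound are exactly the paper's argument. For $j\ge1$, however, the primary version of your argument has a false step.

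\textbf{The false step.} The function $g=|\chi|(k_R*|Q|)$ contains only one localized Cauchy kernel. It therefore lies in $L^r$, but in general not in $L^\infty$. The chain $k_R*L^p\subset L^r$, $k_R*L^r\subset L^\infty$ controls $K_R*|Q|=k_R*(k_R*|Q|)$, not $k_R*|Q|$. The second kernel $k_R(\lambda-\eta)$ enters $G_\eta$ as a factor, not as a further convolution of $g$. Concretely, take $|Q(x)|=|x-x_0|^{-\alpha}$ near an interior point $x_0$ with $1<\alpha<2/p$, which is possible because $p<2$. Then $Q\in L^p$, but $k_R*|Q|$ blows up like $|x-x_0|^{1-\alpha}$. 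So the bound $\|G_\eta\|_{L^b}\le\|g\|_{L^\infty}\|k_R\|_{L^b}\le C\|Q\|_{L^p}$ is not available. For the same reason, your closing remark that every finite $L^\kappa$ norm is controlled is unfounded: already $J_0^1$ is in general no better than $L^r$.

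\textbf{Your fallback is necessary, and it works.} H\"older's inequality gives $\|G_\eta\|_{L^b}\le\|g\|_{L^r}\|k_R\|_{L^\gamma}$, uniformly in $\eta$, with $1/b=1/r+1/\gamma$.
\begin{itemize}
\item Since $1/r=1/p-1/2$, the requirement $\gamma<2$ means $1/b>1/p$. You also need $1/b<1$.
\item Young's relation then gives $1/a=3/2-1/b\in(1/2,\,3/2-1/p)$.
\item Because $p<2$, we have $3/2-1/p<1$, so $1<a<2$ and \eqref{branch-output-unweighted-left} applies.
\item The window $1/p<1/b<1$ is nonempty precisely because $p>1$.
\end{itemize}
Consequently
$\|J_j^1\|_{L^2}\le C\|q\|_{L^1}\|g\|_{L^r}\|k_R\|_{L^\gamma}\mathcal N_a(j-1,1)\le C_j\|Q\|_{L^p}\|q\|_{L^p}^j$.
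More generally, the same argument gives $L^\kappa$ bounds exactly for $\kappa<r$.

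\textbf{Comparison with the paper.} The paper handles $j\ge1$ by exposing the terminal edge rather than the root edge. Write $y$ for the output of the first $j-1$ blocks, whose terminal is $\lambda_j$. The paper proves
$\Gamma_j^1(x,u)\le C\int k_R(y-u)\Gamma_{j-1}^{\beta_R}(x,y)\,\dd^2y$,
with new terminal weight $\beta_R=k_R*|q|\in L^r$. The weighted branch bound with $1<a<p$ puts $\mathcal H_j=\int|Q(x)|\Gamma_{j-1}^{\beta_R}(x,\cdot)\,\dd^2x$ in $L^a$. Hardy--Littlewood--Sobolev then gives $J_j^1\le Ck_R*\mathcal H_j\in L^{a^*}\subset L^2$.

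Your version peels the root block instead. It uses only the defining recursion, Young's inequality in the output variable, and the unweighted branch bound. This avoids the substitution $s=u-y+\eta_j$ inside the exact branch and the resulting collapse inequality. The paper's version instead recycles the weighted branch bound it already needs for the $L^1$ estimates. Both arguments reach exactly the range $\kappa<r$.
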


\begin{proof}
We prove the left-branch statements.  The right-branch proof is identical.
The support assertion follows at once from the support statement in
Lemma~\ref{lem-branch-output-bounds}.

Since $Q$ is supported in a fixed bounded set, H\"older's inequality and
\eqref{branch-output-mass-weighted} give
\begin{align*}
 \|J_j^F\|_{L^1}
 &=\int |Q(x)|\left(\int\Gamma_j^F(x,u)\dd^2u\right)\dd^2x\\
 &\le C_j\|Q\|_{L^1}\|q\|_{L^p}^j\|F\|_{L^r}\\
 &\le C_j\|Q\|_{L^p}\|q\|_{L^p}^j\|F\|_{L^r}.
\end{align*}
This proves \eqref{one-sided-left-weighted-L1}.

We turn to \eqref{one-sided-left-unweighted-L2}.  At order zero,
\begin{equation}\label{one-sided-order-zero-pointwise}
 J_0^1(u)
 =\pi^{-1}|\chi(u)|(k_R*|Q|)(u).
\end{equation}
The Hardy--Littlewood--Sobolev inequality gives
$k_R*|Q|\in L^r$, where $r>2$.  Since $\chi$ has compact support,
\begin{equation}\label{one-sided-order-zero-L2}
 \|J_0^1\|_{L^2}\le C\|k_R*|Q|\|_{L^r}
 \le C\|Q\|_{L^p}.
\end{equation}

Now suppose $j\ge1$ and set
\begin{equation}\label{one-sided-terminal-B-def}
 \beta_R=k_R*|q|.
\end{equation}
Then $\beta_R$ is supported in a fixed compact set and, by the
Hardy--Littlewood--Sobolev inequality,
\begin{equation}\label{one-sided-terminal-B-bound}
 \|\beta_R\|_{L^r}\le C\|q\|_{L^p}.
\end{equation}
We claim that
\begin{equation}\label{one-sided-terminal-collapse}
 \Gamma_j^1(x,u)
 \le C\int k_R(y-u)\Gamma_{j-1}^{\beta_R}(x,y)\dd^2y
 \qquad\text{for a.e. }(x,u).
\end{equation}
Indeed, in the exact order-$j$ branch let
\begin{equation}\label{one-sided-penultimate-output}
 y=\lambda_j+\sum_{\ell=1}^{j-1}
             (\lambda_\ell-\eta_\ell).
\end{equation}
Then
\begin{equation}\label{one-sided-terminal-output-relation}
 u_j=y-\eta_j+s,
 \qquad
 k_R(\eta_j-s)=k_R(y-u_j).
\end{equation}
Use Lemma~\ref{lem-exact-output-pushforward} and test against an arbitrary
nonnegative Borel function of $u_j$.  After the determinant-one substitution
$s=u-y+\eta_j$, the last terminal kernel becomes $k_R(y-u)$, while
$|\chi(s)|$ is bounded.  Integrating the remaining last coefficient edge gives
\[
 \int k_R(\lambda_j-\eta_j)|q(\eta_j)|\dd^2\eta_j
 =\beta_R(\lambda_j).
\]
The preceding variables now form an order-$(j-1)$ branch with terminal variable $\lambda_j$, terminal weight $\beta_R(\lambda_j)$, and output $y$.  This proves \eqref{one-sided-terminal-collapse} as an inequality of positive measures, and hence almost everywhere.

Define
\begin{equation}\label{one-sided-Hj-def}
 \mathcal H_j(y)=\int |Q(x)|\Gamma_{j-1}^{\beta_R}(x,y)\dd^2x.
\end{equation}
Choose once and for all an exponent $a$ with
\begin{equation}\label{one-sided-a-choice}
 1<a<p
\end{equation}
and define $a^*$ by
\begin{equation}\label{one-sided-astar-def}
 \frac1{a^*}=\frac1a-\frac12.
\end{equation}
Minkowski's inequality, the bounded support of $Q$,
\eqref{branch-output-weighted-left}, and
\eqref{one-sided-terminal-B-bound} give
\begin{align}
 \|\mathcal H_j\|_{L^a}
 &\le \|Q\|_{L^1}\mathcal N_a(j-1,\beta_R) \notag\\
 &\le C_j\|Q\|_{L^p}\|q\|_{L^p}^{j-1}\|\beta_R\|_{L^r}
 \le C_j\|Q\|_{L^p}\|q\|_{L^p}^{j}.
 \label{one-sided-Hj-La}
\end{align}
Integrating \eqref{one-sided-terminal-collapse} against $|Q(x)|$ yields
\begin{equation}\label{one-sided-Jj-Hj}
 J_j^1\le C k_R*\mathcal H_j.
\end{equation}
Because $1<a<2$, the Hardy--Littlewood--Sobolev inequality and
\eqref{one-sided-Hj-La} imply
\begin{equation}\label{one-sided-Jj-Lastar}
 \|J_j^1\|_{L^{a^*}}
 \le C_j\|Q\|_{L^p}\|q\|_{L^p}^j.
\end{equation}
Since $a>1$, we have $a^*>2$.  The fixed compact support of $J_j^1$ then gives the embedding $L^{a^*}\hookrightarrow L^2$, which proves \eqref{one-sided-left-unweighted-L2}.
\end{proof}

\paragraph{Mixed densities at the global center.}
\label{subsubsec-mixed-center-densities}

For a mixed term, the root and the two branch outputs occur together only through
\[
        c=-x+u+v.
\]
Thus the push-forward of the absolute kernel to the global center is the convolution of the two branch-output densities.  For nonnegative terminal weights $F,G$, define
\begin{equation}\label{mixed-center-density-def}
 I_{j,k}^{F,G}(c)
 =\int_{\C}|Q(x)|
   \bigl(\Gamma_j^F(x,\cdot)
         *\widetilde\Gamma_k^G(x,\cdot)\bigr)(c+x)
   \dd^2x .
\end{equation}
Equivalently,
\begin{equation}\label{mixed-center-density-expanded}
 I_{j,k}^{F,G}(c)
 =\iint_{\C^2}|Q(x)|\Gamma_j^F(x,u)
       \widetilde\Gamma_k^G(x,c+x-u)
       \dd^2u\dd^2x .
\end{equation}
The exact push-forward identity and the required norm estimates are as follows.

\begin{lem}
\label{lem-mixed-center-densities}
Let $1<p<2$ and $r=2p/(2-p)$.  For every fixed $j,k\ge0$, all
nonnegative measurable terminal weights $F,G$, and every nonnegative Borel
function $h$, one has
\begin{align}
 \int h(c)I_{j,k}^{F,G}(c)\dd^2c
 &=\int |Q(x)|\,
       |\mathcal K_j^L(x;\lambda,\eta,s)|
       |\mathcal K_k^R(x;\mu,\nu,t)| \notag\\
 &\qquad\times F(s)G(t)h(c_{j,k})
       \dd^2x\dd\Xi_j^L\dd\Xi_k^R .
 \label{exact-mixed-center-pushforward}
\end{align}
Thus $I_{j,k}^{F,G}(c)\dd^2c$ is exactly the push-forward of the
absolute weighted mixed Born measure under the global center map
$c_{j,k}=-x+u_j+v_k$.

More generally, if $1\le a,b<\infty$ and $1\le \kappa\le\infty$ satisfy
\begin{equation}\label{mixed-young-exponents}
        1+\frac1\kappa=\frac1a+\frac1b,
\end{equation}
and the two branch norms on the right are finite, then
\begin{equation}\label{mixed-center-general-young}
 \|I_{j,k}^{F,G}\|_{L^\kappa}
 \le \|Q\|_{L^1}
      \mathcal N_a(j,F)\,
      \widetilde{\mathcal N}_b(k,G).
\end{equation}
In particular, with constants depending on the fixed orders and the displayed
exponents,
\begin{align}
 \|I_{j,k}^{1,1}\|_{L^\kappa}
 &\le C_{j,k,\kappa}\|Q\|_{L^p}
       \|q\|_{L^p}^j\|\widetilde q\|_{L^p}^k,
 &&1\le \kappa<\infty,                                      
 \label{mixed-center-unweighted-Lt}\\
 \|I_{j,k}^{F,1}\|_{L^2}
 &\le C_{j,k}\|Q\|_{L^p}
       \|q\|_{L^p}^j\|\widetilde q\|_{L^p}^k
       \|F\|_{L^r},
 &&F\in L^r(\C),                                      
 \label{mixed-center-left-weighted-L2}\\
 \|I_{j,k}^{1,G}\|_{L^2}
 &\le C_{j,k}\|Q\|_{L^p}
       \|q\|_{L^p}^j\|\widetilde q\|_{L^p}^k
       \|G\|_{L^r},
 &&G\in L^r(\C),                                      
 \label{mixed-center-right-weighted-L2}\\
 \|I_{j,k}^{F,G}\|_{L^1}
 &\le C_{j,k}\|Q\|_{L^p}
       \|q\|_{L^p}^j\|\widetilde q\|_{L^p}^k
       \|F\|_{L^r}\|G\|_{L^r},
 &&F,G\in L^r(\C).                                   
 \label{mixed-center-two-weighted-L1}
\end{align}
Each of these center densities has fixed compact support, depending only on the
orders and on the fixed supports and cutoffs.
\end{lem}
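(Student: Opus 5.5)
The proof has three parts: the push-forward identity, the Young-type bound, and then the choice of exponents for each special case.

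\emph{Push-forward identity.} Use Lemma~\ref{lem-exact-output-pushforward} twice, both times at the fixed root $x$. Fix $x$ and apply Tonelli to \eqref{mixed-center-density-expanded}; in the $v$-integral substitute $v=c+x-u$. This gives
\[
\int h(c)I_{j,k}^{F,G}(c)\,\dd^2c=\int|Q(x)|\iint h(-x+u+v)\,\Gamma_j^F(x,u)\,\widetilde\Gamma_k^G(x,v)\,\dd^2u\,\dd^2v\,\dd^2x .
\]
Next apply \eqref{exact-right-output-pushforward} in $v$ with the Borel test function $v\mapsto h(-x+u+v)$. Then apply \eqref{exact-left-output-pushforward} in $u$ with the resulting nonnegative Borel function of $u$; it is Borel by Tonelli. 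These two steps replace $u$ and $v$ by $u_j$ and $v_k$, so the argument of $h$ becomes $c_{j,k}$, which is \eqref{exact-mixed-center-pushforward}. Every step takes place in $[0,\infty]$, so no integrability assumption is needed.

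\emph{Young-type bound.} For fixed $x$, the inner function $c\mapsto(\Gamma_j^F(x,\cdot)*\widetilde\Gamma_k^G(x,\cdot))(c+x)$ is a translate of a convolution. Young's inequality with \eqref{mixed-young-exponents} bounds its $L^\kappa$ norm by $\mathcal N_a(j,F)\widetilde{\mathcal N}_b(k,G)$, uniformly in $x$. Minkowski's integral inequality in $x$ against $|Q|$ then gives \eqref{mixed-center-general-young}.

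\emph{Special cases.} Each follows from Lemma~\ref{lem-branch-output-bounds} with a suitable choice of $(a,b)$, using $\|Q\|_{L^1}\le C\|Q\|_{L^p}$ on the bounded support.
\begin{itemize}
\item[(i)] Unweighted case \eqref{mixed-center-unweighted-Lt}. Given $1\le\kappa<\infty$, take $a=b=2\kappa/(\kappa+1)\in[1,2)$. This is allowed by \eqref{branch-output-unweighted-left} and \eqref{branch-output-unweighted-right}.
\item[(ii)] Left-weighted case \eqref{mixed-center-left-weighted-L2}. We need $1/a+1/b=3/2$ with $a<p$ and $b<2$. Take $a=1$ and any $b\in(1,2)$ with $1/b=1/2$; this is the endpoint $b=2$ and fails. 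Instead take $1/a$ slightly below $1$, namely $a\in(1,p)$, and $1/b=3/2-1/a\in(1/2,1)$, so $b\in(1,2)$. This is possible because $p>1$.
\item[(iii)] The right-weighted case \eqref{mixed-center-right-weighted-L2} is symmetric.
\item[(iv)] Doubly weighted case \eqref{mixed-center-two-weighted-L1}. Take $a=b=1$ and $\kappa=1$, using \eqref{branch-output-weighted-left} and \eqref{branch-output-weighted-right} with $a=1<p$.
\end{itemize}

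\emph{Supports.} Compact support of the centre density follows from $c=-x+u+v$. Here $x\in\operatorname{supp}Q$, and $u,v$ range over the fixed compact output supports of Lemma~\ref{lem-branch-output-bounds}. So the support lies in a fixed Minkowski sum of compact sets.

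The only delicate point is the exponent bookkeeping in the singly weighted $L^2$ bound. The weighted branch bound only reaches $a<p$ and the unweighted one only reaches $b<2$. The target $\kappa=2$ is attainable exactly because $1/p+1/2<3/2$, which is the strict inequality $p>1$. I would check this carefully, together with the measurability needed for Tonelli in the push-forward step.
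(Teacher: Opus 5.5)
Your proposal is correct and follows the paper's proof essentially step for step. It uses the translation $v=c+x-u$ followed by the right and then the left branch push-forward, the same Minkowski--Young bound, and the same exponent choices; in the singly weighted case the paper's $a=2p/(p+1)$, $b=2p/(2p-1)$ is one point of your range $a\in(1,p)$, $1/b=3/2-1/a$, and you should simply delete the aborted attempt with $a=1$ at the start of item (ii).
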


\begin{proof}
By \eqref{mixed-center-density-expanded}, Tonelli's theorem, and the
translation $v=c+x-u$,
\begin{align*}
 \int h(c)I_{j,k}^{F,G}(c)\dd^2c
 &=\iiint |Q(x)|\Gamma_j^F(x,u)
       \widetilde\Gamma_k^G(x,v) \\
 &\qquad\qquad\times h(-x+u+v)
       \dd^2v\dd^2u\dd^2x.
\end{align*}
Apply Lemma~\ref{lem-exact-output-pushforward} first to the right branch with
$v\mapsto h(-x+u+v)$ and then to the left branch.  Since
$-x+u_j+v_k=c_{j,k}$, this gives
\eqref{exact-mixed-center-pushforward}.

For the norm estimate, Minkowski's integral inequality and translation
invariance in the center variable give
\begin{align*}
 \|I_{j,k}^{F,G}\|_{L^\kappa_c}
 &\le \int |Q(x)|
       \|\Gamma_j^F(x,\cdot)
          *\widetilde\Gamma_k^G(x,\cdot)\|_{L^\kappa}
       \dd^2x.
\end{align*}
Young's convolution inequality with
\eqref{mixed-young-exponents} therefore yields
\eqref{mixed-center-general-young}.

For \eqref{mixed-center-unweighted-Lt}, choose
\begin{equation}\label{mixed-unweighted-exponent-choice}
        a=b=\frac{2\kappa}{\kappa+1}.
\end{equation}
For every finite $\kappa\ge1$, this exponent belongs to $[1,2)$, so the unweighted
branch estimates \eqref{branch-output-unweighted-left} and
\eqref{branch-output-unweighted-right} apply.  Since $Q$ has fixed compact
support, $\|Q\|_{L^1}\le C\|Q\|_{L^p}$.

For \eqref{mixed-center-left-weighted-L2}, take
\begin{equation}\label{mixed-one-weight-exponent-choice}
        a=\frac{2p}{p+1},
        \qquad
        b=\frac{2p}{2p-1}.
\end{equation}
Then
\[
        1<a<p,
        \qquad
        1<b<2,
        \qquad
        \frac1a+\frac1b=\frac32.
\]
Hence \eqref{mixed-center-general-young},
\eqref{branch-output-weighted-left}, and
\eqref{branch-output-unweighted-right} give the assertion.  Interchanging the
two branches proves \eqref{mixed-center-right-weighted-L2}.  Finally,
\eqref{mixed-center-two-weighted-L1} follows from
\eqref{mixed-center-general-young} with $a=b=\kappa=1$ and the two weighted branch
estimates.  The support statement follows from the fixed-order compact support
of both branch-output densities and the fixed support of $Q$.
\end{proof}

These are the density bounds listed in Section~\ref{subsec-model-born-terms}; they will be used both to prove integrability of the principal amplitudes and to estimate the averaging errors.

\subsection{Absolute convergence and the fixed-order identities}
\label{subsubsec-direct-rough-identities}

We now justify the calculations of Section~\ref{sec:fixed-born-orders} without smoothing the coefficients. The branch masses give absolute convergence before averaging; the output-density bounds justify each term after averaging. Kernel truncation and dominated convergence then identify the multiple integrals with the original recursive Cauchy operators.

\begin{lem}
\label{lem-fixed-order-absolute-convergence}
Let $1<p<2$, let $r=2p/(2-p)$, and fix $j,k\ge0$.  For almost every center
$z_0$ and every root $x$ in the fixed compact set containing the supports and
cutoffs, one has
\begin{align}
&\int |\mathcal K_j^L(x;\lambda,\eta,s)|
       |A(s)-A(z_0)|\dd\Xi_j^L
 \le C_j\|q\|_{L^p}^j
       \bigl(\|A\|_{L^r}+|A(z_0)|\bigr),
\label{rough-left-branch-mass}\\
&\int |\mathcal K_k^R(x;\mu,\nu,t)|
       |\widetilde A(t)-\widetilde A(z_0)|\dd\Xi_k^R
 \le C_k\|\widetilde q\|_{L^p}^k
       \bigl(\|\widetilde A\|_{L^r}
             +|\widetilde A(z_0)|\bigr).
\label{rough-right-branch-mass}
\end{align}
Consequently,
\begin{align}
&\int |Q(x)|\,|\mathcal K_j^L(x;\lambda,\eta,s)|
       |A(s)-A(z_0)|\dd^2x\dd\Xi_j^L
\notag\\
&\qquad\le C_j\|Q\|_{L^1}\|q\|_{L^p}^j
       \bigl(\|A\|_{L^r}+|A(z_0)|\bigr),
\label{rough-left-root-mass}\\
&\int |Q(x)|\,|\mathcal K_j^L|\,|\mathcal K_k^R|
       |A(s)-A(z_0)|
       |\widetilde A(t)-\widetilde A(z_0)|
       \dd^2x\dd\Xi_j^L\dd\Xi_k^R
\notag\\
&\qquad\le C_{j,k}\|Q\|_{L^1}
       \|q\|_{L^p}^j\|\widetilde q\|_{L^p}^k
       \bigl(\|A\|_{L^r}+|A(z_0)|\bigr)
       \bigl(\|\widetilde A\|_{L^r}
             +|\widetilde A(z_0)|\bigr).
\label{rough-mixed-root-mass}
\end{align}
The right-hand sides of \eqref{rough-left-root-mass} and
\eqref{rough-mixed-root-mass} are integrable in $z_0$ after multiplication
by $|\varphi(z_0)|$.  The analogous assertion holds for the right one-sided
branch.
\end{lem}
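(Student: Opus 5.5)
The plan is to reduce everything to the positive branch-output densities of Section~\ref{subsubsec-positive-output-densities}, using the push-forward identity of Lemma~\ref{lem-exact-output-pushforward} with the test function $h\equiv1$.

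\emph{Step 1: the one-branch masses.} Fix a center $z_0$ that is a Lebesgue point of $A$; this holds for almost every $z_0$. The terminal weight splits pointwise as
\[
|A(s)-A(z_0)|\le |A(s)|+|A(z_0)|.
\]
The left side of \eqref{rough-left-branch-mass} is therefore at most
\[
\int\Gamma_j^{|A|}(x,u)\,\dd^2u+|A(z_0)|\int\Gamma_j^{1}(x,u)\,\dd^2u .
\]
The first integral is bounded by \eqref{branch-output-mass-weighted} with $F=|A|\in L^r(\C)$. Here \eqref{global-cauchy-primitives} gives $A\in L^r$ globally, and the cutoff $\chi(s)$ in the kernel localizes the support in any case. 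The second integral is bounded by \eqref{branch-output-mass-unweighted}. Both bounds hold uniformly in $x$, in particular for every root in the fixed compact set, and this yields \eqref{rough-left-branch-mass}. The right-branch estimate \eqref{rough-right-branch-mass} is identical, using $\widetilde\Gamma_k$.

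\emph{Step 2: pairing with the root.} Integrating \eqref{rough-left-branch-mass} against $|Q(x)|$ over the fixed compact root set gives \eqref{rough-left-root-mass} immediately; the bound is uniform in $x$, so the factor is $\|Q\|_{L^1}$. For the mixed bound \eqref{rough-mixed-root-mass}, Tonelli lets us integrate in the order $x$ last. For fixed $x$ the integrand factorizes into the left branch integral times the right branch integral, because the two branches share only the root $x$. Applying Step~1 to each factor and then integrating $|Q(x)|$ gives the product bound. The mixed identity \eqref{exact-mixed-center-pushforward} with $h\equiv1$ is an equivalent way to phrase this step. Replacing $\|Q\|_{L^1}$ by $C\|Q\|_{L^p}$ is allowed by the compact support of $Q$.

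\emph{Step 3: integrability in the center.} The $z_0$-dependent factors on the right-hand sides are $1+|A(z_0)|$ in the one-sided case and $(1+|A(z_0)|)(1+|\widetilde A(z_0)|)$ in the mixed case, up to constants. Multiplied by $|\varphi(z_0)|$, these are integrable by \eqref{center-weight-integrability}. That estimate relies on $A,\widetilde A\in L^r$ with $r>2$, and on $A\widetilde A\in L^{r/2}$ with $r/2>1$ on the support of $\varphi$.

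The only delicate point is measure-theoretic. One must check that the nested absolute integrals are exactly the $h\equiv1$ case of Lemma~\ref{lem-exact-output-pushforward}, with the branch roots ranging over the set $X_{j,k}$ in which that lemma holds. One must also check that the exceptional $z_0$-set is a single null set, namely the non-Lebesgue points of $A$ and $\widetilde A$, independent of $x$. Since every bound used is a supremum over $x$ of a quantity with no $z_0$-dependence apart from the constant $|A(z_0)|$, this goes through without further work.
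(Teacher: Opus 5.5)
Your proposal is correct and follows the paper's proof essentially step for step. You use the push-forward identity with $h\equiv1$ and the split $|A(s)-A(z_0)|\le|A(s)|+|A(z_0)|$ to reduce the branch masses to the $a=1$ cases of Lemma~\ref{lem-branch-output-bounds}, factorize the mixed integrand at the common root $x$ before integrating against $|Q(x)|$, and get center integrability from $A,\widetilde A\in L^r$ and $A\widetilde A\in L^{r/2}\subset L^1$ on $\operatorname{supp}\varphi$; the last step is exactly the content of \eqref{center-weight-integrability}, which the paper repeats inline.
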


\begin{proof}
By Lemma~\ref{lem-exact-output-pushforward} with $h=1$,
\begin{align*}
\int |\mathcal K_j^L|\,|A(s)-A(z_0)|\dd\Xi_j^L
&\le \int \Gamma_j^{|A|}(x,u)\dd^2u
   +|A(z_0)|\int\Gamma_j^1(x,u)\dd^2u.
\end{align*}
The estimates \eqref{branch-output-weighted-left} and
\eqref{branch-output-unweighted-left}, both with exponent $a=1$, give
\eqref{rough-left-branch-mass}.  The right-hand estimate is identical.
Integrating \eqref{rough-left-branch-mass} against $|Q(x)|$ gives
\eqref{rough-left-root-mass}.  For the mixed estimate, apply
\eqref{rough-left-branch-mass} and
\eqref{rough-right-branch-mass} separately at the common root $x$, multiply
the two bounds, and integrate against $|Q(x)|$.

For the center integration, observe that on the compact support of $\varphi$, the functions $A$ and $\widetilde A$ belong to $L^r$, hence to
$L^1$, while
$A\widetilde A\in L^{r/2}\subset L^1$ because $r>2$.  Expanding the product
on the right of \eqref{rough-mixed-root-mass} therefore gives an integrable
majorant.  The one-sided case follows in the same way without the product term.
\end{proof}

We can therefore state the fixed-order formulas directly for the original $L^p$ coefficients.
The four errors $H_1^\tau$, $H_A^\tau$, $H_{\widetilde A}^\tau$, and
$H_{A\widetilde A}^\tau$ were defined with the common averaged bracket,
equations~\eqref{model-errors-def}--\eqref{model-product-error-def}.
The last is understood through the integral formula~\eqref{Etau-def}; its
input is compactly supported and belongs to $L^1$.

\begin{prop}
\label{prop-direct-rough-fixed-order}
Let $1<p<2$, fix $j,k\ge0$, and let $\tau\ge2$.  The branch expansions
\eqref{exact-left-expanded} and \eqref{exact-right-expanded}, with
$H=A-A(z_0)$ and
$H=\widetilde A-\widetilde A(z_0)$ respectively, hold for almost every
$(x,z_0)$ for the original $L^p$ coefficients.  Moreover, the one-sided Born
functionals satisfy
\begin{equation}\label{rough-left-born-output-identity}\aliaslabel{fixed-left-signed-output-integral}
\mathcal B_{j,\tau}
=\int e^{i\tau P_j^L}
 Q(x)\mathcal K_j^L(x;\lambda,\eta,s)
 \mathcal E_\tau^L(u_j,s)
 \dd^2x\dd\Xi_j^L,
\end{equation}
\begin{equation}\label{rough-right-born-output-identity}
\widetilde{\mathcal B}_{k,\tau}
=\int e^{i\tau P_k^R}
 Q(x)\mathcal K_k^R(x;\mu,\nu,t)
 \mathcal E_\tau^R(v_k,t)
 \dd^2x\dd\Xi_k^R,
\end{equation}
where
\begin{align}
\mathcal E_\tau^L(u,s)
&=A(s)E_\tau\varphi(u)-E_\tau(\varphi A)(u),
\label{rough-left-born-bracket}\\
\mathcal E_\tau^R(v,t)
&=\widetilde A(t)E_\tau\varphi(v)
  -E_\tau(\varphi\widetilde A)(v).
\label{rough-right-born-bracket}
\end{align}
The left bracket has the decomposition \eqref{rough-left-born-bracket-decomp}. The right counterpart is
\begin{align}
\mathcal E_\tau^R(v,t)
&=\varphi(v)(\widetilde A(t)-\widetilde A(v))
  +\widetilde A(t)H_1^\tau(v)-H_{\widetilde A}^\tau(v).
\label{rough-right-born-bracket-decomp}
\end{align}

The mixed functional satisfies
\begin{equation}
\mathcal M_{j,k,\tau}
=\int e^{i\tau P_{j,k}(\Xi)}
 \mathcal K_{j,k}^{\rm mix}(\Xi)
 \mathcal E_\tau(c_{j,k},s,t)
 \dd^2x\dd\Xi_j^L\dd\Xi_k^R,
\label{rough-mixed-born-output-identity}\aliaslabel{fixed-mixed-signed-output-integral}
\end{equation}
where $\mathcal E_\tau(c,s,t)$ is the order-independent bracket in \eqref{rough-mixed-born-bracket}, with the exact decomposition \eqref{rough-mixed-born-bracket-decomp}.
Every integral in
\eqref{rough-left-born-output-identity}--
\eqref{rough-mixed-born-output-identity} is absolutely convergent for fixed
$\tau$; the oscillatory factors have modulus one.
\end{prop}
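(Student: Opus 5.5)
We argue in three stages: identify the branch expansions for rough coefficients, expand the Born functionals as absolutely convergent integrals in $(z_0,\Xi)$, and then move the $z_0$-integral inside and complete the square.

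\emph{Branch expansions.} The recursive definitions \eqref{exact-left-branch-0}--\eqref{exact-right-branch-rec} agree with the iterates $U_j=(S^q_{z_0,\tau})^jB_{z_0,\tau}$ by construction of $S^q$ and $B$. The only issue is writing the nested Cauchy operators as one multiple integral, which is Fubini's theorem. Its hypothesis is the absolute bound \eqref{rough-left-branch-mass}. By Lemma~\ref{lem-exact-output-pushforward} with $h=1$ and Lemma~\ref{lem-branch-output-bounds}, this bound is finite for every root $x$ in the compact set and every Lebesgue point $z_0$. To be safe, I would first truncate the kernels $k_R$ to $\{|z|\ge n^{-1}\}$ and the coefficients $q,A$ to $\{|\cdot|\le n\}$. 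For the truncated objects, Fubini is trivial and the nested and multiple forms coincide. Then let $n\to\infty$:
\begin{itemize}
\item on the multiple-integral side, by dominated convergence with the majorant of Lemma~\ref{lem-fixed-order-absolute-convergence};
\item on the nested side, through \eqref{S-coefficient-continuity} and the Hardy--Littlewood--Sobolev bounds of Lemma~\ref{lem-localized-kernel-mapping}, which give convergence in $L^\infty(X)$ or $L^r(X)$ for a.e.\ $z_0$.
\end{itemize}
This identifies \eqref{exact-left-expanded} and \eqref{exact-right-expanded} for a.e.\ $(x,z_0)$.

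\emph{Functionals as multiple integrals.} Substitute these expansions into \eqref{left-born-functional-def} and \eqref{mixed-born-functional-def}, multiply by $\tau\varphi(z_0)/\pi$, and regard each $\mathcal B_{j,\tau}$, $\mathcal M_{j,k,\tau}$ as an integral over $(z_0,x,\Xi)$. Absolute convergence of this joint integral follows from \eqref{rough-left-root-mass} and \eqref{rough-mixed-root-mass}, since the right sides are integrable against $|\varphi(z_0)|$ by \eqref{center-weight-integrability}. Joint measurability comes from Lemma~\ref{lem-center-measurability}, and the phases have modulus one. Fubini therefore lets us integrate in $z_0$ first, for fixed spatial variables.

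\emph{Completing the square.} Use \eqref{exact-left-phase-split} and \eqref{exact-mixed-phase-split} to write the phase as $\psi_{z_0}(c)+P$, with $P$ independent of $z_0$. Expand $(A(s)-A(z_0))$, or the product of the two terminal differences, into the terms that depend on $z_0$ only through $\varphi$, $\varphi A$, $\varphi\widetilde A$ and $\varphi A\widetilde A$. Since $\psi_{z_0}(c)=\psi_c(z_0)$, the $z_0$-integral of each term is exactly $E_\tau$ of that function at $c$, in the absolutely convergent $L^1$ realization of Lemma~\ref{lem-Etau}. All four inputs are compactly supported and in $L^1$; for the product, this uses $A\widetilde A\in L^{r/2}$.

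This yields \eqref{rough-left-born-output-identity}, \eqref{rough-right-born-output-identity} and \eqref{rough-mixed-born-output-identity}. The decompositions \eqref{rough-left-born-bracket-decomp}, \eqref{rough-right-born-bracket-decomp} and \eqref{rough-mixed-born-bracket-decomp} are pure algebra from the definitions of the $H^\tau$'s.

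\emph{Absolute convergence for fixed $\tau$.} Each $E_\tau$ term is bounded by $(\tau/\pi)\|\cdot\|_{L^1}$ via \eqref{Etau-L1-Linfty-fixed-tau}. The remaining factors $|A(s)|$, $|\widetilde A(t)|$ and $|A(s)\widetilde A(t)|$ are controlled by the weighted masses \eqref{branch-output-mass-weighted} and \eqref{mixed-center-two-weighted-L1}.

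\emph{Main obstacle.} The delicate point is the first step. One must check that the truncation limit can be taken simultaneously for a.e.\ $z_0$ and that it identifies with the interpolation-defined operator $S^q$. The latter is handled by \eqref{S-coefficient-continuity}. A common null set is obtained by taking countably many truncations and using Fubini on the product exceptional set.
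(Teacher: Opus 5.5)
Your proposal is correct and follows essentially the same route as the paper. Both arguments use the absolute branch-mass majorant of Lemma~\ref{lem-fixed-order-absolute-convergence}, integrated against $|\varphi(z_0)|$, to justify Fubini and dominated convergence (with kernel truncation as a convenience), then the algebraic phase splitting, and then center integration first to produce the $E_\tau$ brackets. The one minor difference is that you get absolute convergence at fixed $\tau$ from the crude bound $\|E_\tau f\|_{L^\infty}\le(\tau/\pi)\|f\|_{L^1}$ plus the branch masses, whereas the paper also records separate integrability of each term of the bracket decompositions via the density bounds; your bound is enough for the statement as written.
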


\begin{proof}
For $\delta>0$, replace every Cauchy factor by its truncated version
\begin{equation}
 \frac{\mathbf 1_{\{|z-w|>\delta\}}}{D_\partial(z,w)},
 \qquad
 \frac{\mathbf 1_{\{|z-w|>\delta\}}}{D_{\bar\partial}(z,w)}.
\label{rough-cauchy-truncation}
\end{equation}
At fixed $\delta$ all kernels are bounded on the fixed compact supports.  Since
$q,\widetilde q,Q\in L^p_c\subset L^1_c$ and
$A,\widetilde A\in L^r_{\rm loc}\subset L^1_{\rm loc}$, the finite iterated
integrals are absolutely convergent, and repeated Fubini gives the recursive
branch expansions and the corresponding one-sided and mixed formulas.

The phase identities \eqref{exact-left-phase-split},
\eqref{exact-right-phase-split}, and
\eqref{exact-mixed-phase-split} are algebraic and are unchanged by the
truncation.  In the left one-sided term, for example, the center integration is
therefore
\begin{align*}
\frac{\tau}{\pi}\int
 e^{i\tau\psi_{z_0}(u_j)}\varphi(z_0)
 \bigl(A(s)-A(z_0)\bigr)\dd^2z_0
 =A(s)E_\tau\varphi(u_j)-E_\tau(\varphi A)(u_j).
\end{align*}
This proves the truncated version of
\eqref{rough-left-born-output-identity}.  The right identity follows by the changes of orientation and phase sign in Remark~\ref{rem-cauchy-involution}.  In
the mixed term, expand the two terminal differences before integrating in
$z_0$; the four resulting center integrals are exactly the four terms in
\eqref{rough-mixed-born-bracket}.  The decompositions
\eqref{rough-left-born-bracket-decomp},
\eqref{rough-right-born-bracket-decomp}, and
\eqref{rough-mixed-born-bracket-decomp} are pointwise algebraic identities.

We now let $\delta\downarrow0$.  The absolute values of all truncated branch
kernels are bounded by the corresponding untruncated positive kernels.  For the
pre-averaged formulas, Lemma~\ref{lem-fixed-order-absolute-convergence} supplies
an integrable majorant in all branch, root, and center variables.  Dominated
convergence therefore gives the untruncated branch expansions and permits the
center integration to be performed first.  Since the full expanded kernels are
absolutely integrable, repeated Fubini also identifies these multiple integrals
with the recursively defined nontruncated Cauchy operators.

The displayed terms are separately integrable by the one-sided $L^1$--$L^2$
bounds and the four mixed center-density bounds.  For the product error, the input
$\varphi A\widetilde A$ is compactly supported and belongs to $L^{r/2}\subset L^1$.  Hence \eqref{Etau-L1-Linfty-fixed-tau} gives
$E_\tau(\varphi A\widetilde A)\in L^\infty$ for fixed $\tau$, and therefore
\begin{equation}
 H_{A\widetilde A}^\tau\big|_{Z_{j,k}}
 \in L^{r/2}(Z_{j,k})
\end{equation}
on the fixed center support $Z_{j,k}$.  It pairs there with
$I_{j,k}^{1,1}\in L^{(r/2)'}$.  The same bounds dominate the limit
$\delta\downarrow0$ term by term.
\end{proof}

\section{Vanishing of the fixed Born terms}\label{sec:fixed-cancellations}

We now apply the identities of Section~\ref{sec:positive-densities}. The density bounds make the principal amplitudes integrable and control the averaging errors. The mixed terms leave one product error, which is treated by duality in Sections~\ref{subsec-product-error} and~\ref{subsec-product-duality}.

\subsection{One-sided cancellations}

\begin{prop}\label{prop-fixed-one-sided-cancellations}
Let $1<p<2$.  For every fixed $j\geq0$ and every
$\varphi\in C_0^\infty(\Omega)$,
\begin{equation}\label{fixed-linear-cancellation}
        \mathcal B_{j,\tau}=o(1)
        \qquad (\tau\to+\infty).
\end{equation}
For every fixed $k\geq0$ one likewise has
\begin{equation}\label{fixed-right-linear-cancellation}
        \widetilde{\mathcal B}_{k,\tau}=o(1).
\end{equation}
\end{prop}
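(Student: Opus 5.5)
The plan is to start from the fixed-order identity \eqref{rough-left-born-output-identity} of Proposition~\ref{prop-direct-rough-fixed-order} and split the bracket $\mathcal E_\tau^L(u_j,s)$ by \eqref{rough-left-born-bracket-decomp} into three pieces:
\[
\mathcal B_{j,\tau}=\mathrm I_\tau+\mathrm{II}_\tau-\mathrm{III}_\tau .
\]
Here $\mathrm I_\tau$ carries the principal amplitude $\varphi(u_j)(A(s)-A(u_j))$, $\mathrm{II}_\tau$ carries $A(s)H_1^\tau(u_j)$, and $\mathrm{III}_\tau$ carries $H_A^\tau(u_j)$. In each piece the factor $e^{i\tau P_j^L}$ is kept. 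The right-hand statement follows identically from \eqref{rough-right-born-output-identity}, \eqref{rough-right-born-bracket-decomp}, and the right-branch densities.

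\emph{The two error pieces.} Bound $\mathrm{II}_\tau$ by discarding the oscillation and applying the push-forward identity of Lemma~\ref{lem-exact-output-pushforward} with terminal weight $F=|A|$ and $h=|H_1^\tau|$. This gives
\[
|\mathrm{II}_\tau|\le \|H_1^\tau\|_{L^\infty}\,\|J_j^{|A|}\|_{L^1}.
\]
By \eqref{one-sided-left-weighted-L1} with $F=|A|\in L^r$ and by \eqref{Etau-smooth-local}, this is $o(1)$. For $\mathrm{III}_\tau$, the same push-forward with $F=1$ gives
\[
|\mathrm{III}_\tau|\le \int J_j^1|H_A^\tau|\le \|J_j^1\|_{L^2}\|H_A^\tau\|_{L^2}.
\]
Here $\|J_j^1\|_{L^2}$ is finite by \eqref{one-sided-left-unweighted-L2}. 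Since $\varphi A\in L^2$ (as $A\in L^r_{\rm loc}$ with $r>2$), \eqref{Etau-strong} gives $\|H_A^\tau\|_{L^2}\to0$. These two bounds are the main use of the density lemmas, and the unweighted $L^2$ bound is the only nontrivial point.

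\emph{The principal piece.} For $j=0$ we have $u_0=s$, so the amplitude vanishes identically and $\mathrm I_\tau=0$. For $j\ge1$, the amplitude
\[
F_0=Q(x)\mathcal K_j^L\,\varphi(u_j)\bigl(A(s)-A(u_j)\bigr)
\]
is independent of $\tau$. I will show it lies in $L^1$ of all variables. The term with $A(s)$ is dominated, via the push-forward, by $\|\varphi\|_\infty\|J_j^{|A|}\|_{L^1}$. The term with $A(u_j)$ is dominated by $\int J_j^1|\varphi A|\le\|J_j^1\|_{L^2}\|\varphi A\|_{L^2}$. Then write the phase as $P_j^L$. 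After the fixed affine change of variables that takes $u_j$ (together with $x$, which does not occur in the phase) as passive variables and linear coordinates on the fiber $\{u_j=\mathrm{const}\}$ as active variables, $P_j^L$ has nondegenerate fiber Hessian by Lemma~\ref{lem-signed-quadratic-normal-form}. The fiber has real dimension $2j\ge2$. Lemma~\ref{lem-quadratic-RL} therefore yields $\mathrm I_\tau\to0$.

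\emph{Main obstacle.} The main obstacle is the bookkeeping in the principal piece. One must check that the Jacobian of the change of variables is constant and that integrability survives it; both follow because the map is linear and invertible. One must also check that the variable $x$ (and any others absent from the phase) can legitimately be treated as passive. Once $F_0\in L^1$ is established, this is exactly the setting of Lemma~\ref{lem-quadratic-RL}.
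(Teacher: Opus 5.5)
Your proposal is correct and follows the paper's proof essentially step for step. It uses the same three-term bracket decomposition, the same pairings $\|H_1^\tau\|_{L^\infty}\|J_j^{|A|}\|_{L^1}$ and $\|H_A^\tau\|_{L^2}\|J_j^1\|_{L^2}$ (and the analogous ones for the principal amplitude), and Lemma~\ref{lem-quadratic-RL} with passive variables $(x,u_j)$ for $j\ge1$. One small slip does not affect the argument, since only $d\ge1$ is needed: the fiber $\{u_j=\mathrm{const}\}$ in the variables $(\lambda,\eta,s)$ has complex dimension $2j$, so its real dimension is $4j$, not $2j$.
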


\begin{proof}
We prove the assertion for the left branch; the right branch is symmetric. Proposition~\ref{prop-direct-rough-fixed-order} gives \eqref{fixed-left-signed-output-integral}, and the bracket is decomposed in \eqref{fixed-left-signed-output-bracket}. All changes in the order of integration are justified there at the original $L^p$ regularity.

For the principal term, the contribution containing $A(s)$ satisfies
\begin{align}
&\int |Q(x)|\,|\mathcal K_j^L(x;\lambda,\eta,s)|
 |\varphi(u_j)|\,|A(s)|\dd^2x\dd\Xi_j^L \notag\\
&\qquad=\int |\varphi(u)|J_j^{|A|}(u)\dd^2u
 \le \|\varphi\|_{L^\infty}\|J_j^{|A|}\|_{L^1},
\label{fixed-left-principal-terminal-bound}
\end{align}
while the contribution containing $A(u_j)$ satisfies
\begin{align}
&\int |Q(x)|\,|\mathcal K_j^L(x;\lambda,\eta,s)|
 |\varphi(u_j)A(u_j)|\dd^2x\dd\Xi_j^L \notag\\
&\qquad=\int |\varphi(u)A(u)|J_j^1(u)\dd^2u
 \le \|\varphi A\|_{L^2}\|J_j^1\|_{L^2}.
\label{fixed-left-principal-output-bound}
\end{align}
Both bounds are finite by Lemma~\ref{lem-one-sided-output-densities}.  Hence the principal term in
\eqref{fixed-left-signed-output-bracket} has an $L^1$ amplitude in all branch
variables.  If $j\geq1$, the residual phase $P_j^L$ depends only on
$(\lambda,\eta,s)$ and is nondegenerate on every fiber $u_j=\mathrm{const}$ by
\eqref{exact-left-phase-split}--\eqref{exact-left-residual-phase}.  We apply
Lemma~\ref{lem-quadratic-RL} with passive variables $(x,u_j)$, the root $x$
being absent from the phase, and with $w$ coordinates on the fiber of
$(\lambda,\eta,s)$, of real dimension $4j\ge4$.  Hence this term tends to zero.  If
$j=0$, then $u_0=s$ and the principal difference vanishes identically.

For the term containing $H_1^\tau$, the push-forward identity and \eqref{one-sided-left-weighted-L1} give
\begin{align}
&\left|\int e^{i\tau P_j^L}Q(x)\mathcal K_j^L
 A(s)H_1^\tau(u_j)\dd^2x\dd\Xi_j^L\right| \notag\\
&\qquad\le \|H_1^\tau\|_{L^\infty}
 \|J_j^{|A|}\|_{L^1}=o(1),
\label{fixed-left-smooth-error}
\end{align}
because $H_1^\tau=E_\tau\varphi-\varphi$ tends uniformly to zero by Lemma~\ref{lem-Etau}.  Similarly,
\begin{align}
&\left|\int e^{i\tau P_j^L}Q(x)\mathcal K_j^L
 H_A^\tau(u_j)\dd^2x\dd\Xi_j^L\right| \notag\\
&\qquad\le \|H_A^\tau\|_{L^2}\|J_j^1\|_{L^2}=o(1),
\label{fixed-left-L2-error}
\end{align}
since $\varphi A\in L^2$ on its compact support and Lemma
\ref{lem-Etau} gives $H_A^\tau\to0$ strongly in $L^2$.  Combining
\eqref{fixed-left-principal-terminal-bound}--\eqref{fixed-left-L2-error}
proves \eqref{fixed-linear-cancellation}.

The right-branch assertion follows from \eqref{rough-right-born-output-identity}, \eqref{rough-right-born-bracket-decomp}, and the corresponding estimates in Lemma~\ref{lem-one-sided-output-densities}.
\end{proof}

\subsection{The mixed oscillatory center kernel}

For the mixed term, we also retain the phase when integrating out the variables other than the signed center. Denote the resulting oscillatory density by $\mathcal G_{\tau,j,k}$. In contrast with the positive density $I_{j,k}^{1,1}$, it contains $e^{i\tau P_{j,k}}$, but its absolute value is bounded by that positive density.

For fixed $j,k$, let
\begin{equation}\label{rough-center-hat-variables}
 \widehat\Xi_{j,k}
 =(x,\lambda,\eta,s,\mu,\nu),
 \qquad
 \dd\widehat\Xi_{j,k}
 =\dd^2x\,\dd\Xi_j^L
   \prod_{m=1}^k\dd^2\mu_m\dd^2\nu_m.
\end{equation}
Thus $\widehat\Xi_{j,k}$ contains every mixed integration variable except
for the right terminal variable $t$.  For $c\in\C$, set
\begin{equation}\label{rough-center-terminal-substitution}
 t_c(\widehat\Xi_{j,k})
 =c+x-u_j-\sum_{m=1}^k(\mu_m-\nu_m),
\end{equation}
and let $\Xi_c$ denote the complete mixed variable list obtained by inserting
$t=t_c(\widehat\Xi_{j,k})$.  The empty sum is understood as zero.  Since
\[
 c_{j,k}
 =-x+u_j+\sum_{m=1}^k(\mu_m-\nu_m)+t,
\]
the affine change of variables
\begin{equation}\label{rough-center-change-variables}
 (t,\widehat\Xi_{j,k})
 \longmapsto(c_{j,k},\widehat\Xi_{j,k})
\end{equation}
has real Jacobian determinant one.

\begin{lem}
\label{lem-rough-oscillatory-center-kernel}
Let $1<p<2$, fix $j,k\geq0$, and put
\begin{equation}\label{product-duality-exponents}
 \frac r2=\frac{p}{2-p}>1,
 \qquad
 \left(\frac r2\right)'=\frac{r}{r-2}
     =\frac{p}{2(p-1)}.
\end{equation}
For $\tau\geq2$, define for almost every $c\in\C$
\begin{equation}\label{rough-oscillatory-center-kernel-def}
 \mathcal G_{\tau,j,k}(c)
 =\int
   e^{i\tau P_{j,k}(\Xi_c)}
   \mathcal K_{j,k}^{\rm mix}(\Xi_c)
   \dd\widehat\Xi_{j,k}.
\end{equation}
Then there is a fixed compact set $Z_{j,k}\Subset\C$, depending only on
the orders and the fixed supports and cutoffs, such that the integral in
\eqref{rough-oscillatory-center-kernel-def} is absolutely convergent for
almost every $c$, the resulting function is measurable, and
$\operatorname{supp}\mathcal G_{\tau,j,k}\subset Z_{j,k}$.  Moreover,
\begin{equation}\label{rough-center-kernel-pointwise-bound}
 |\mathcal G_{\tau,j,k}(c)|\leq I_{j,k}^{1,1}(c)
 \qquad\text{for a.e. }c.
\end{equation}
Consequently,
\begin{equation}\label{rough-center-kernel-Lmprime}
 \|\mathcal G_{\tau,j,k}\|_{L^{(r/2)'}}
 \leq C_{j,k,p}
 \|Q\|_{L^p}\|q\|_{L^p}^j
 \|\widetilde q\|_{L^p}^k,
\end{equation}
with a constant independent of $\tau$.

Moreover, for every $H\in L^{r/2}_{\rm loc}(\C)$,
\begin{align}
 \int H(c)\mathcal G_{\tau,j,k}(c)\dd^2c
 &=\int e^{i\tau P_{j,k}(\Xi)}
   \mathcal K_{j,k}^{\rm mix}(\Xi)
   H(c_{j,k})
   \dd^2x\dd\Xi_j^L\dd\Xi_k^R.
 \label{rough-center-kernel-pairing}
\end{align}
Both sides are absolutely convergent.
\end{lem}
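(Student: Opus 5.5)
The lemma is essentially Fubini--Tonelli applied to the push-forward identity of Lemma~\ref{lem-mixed-center-densities}, after the unimodular change of variables \eqref{rough-center-change-variables}. The plan is first to prove absolute convergence and the pointwise bound \eqref{rough-center-kernel-pointwise-bound}. The $L^{(r/2)'}$ estimate and the pairing identity then follow at once.

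\emph{Absolute convergence, measurability, bound.} Apply the exact push-forward identity \eqref{exact-mixed-center-pushforward} with $F=G=1$ and with $h=\mathbf 1_E$ for an arbitrary Borel set $E$. On the right-hand side, perform the affine substitution $(t,\widehat\Xi_{j,k})\mapsto(c,\widehat\Xi_{j,k})$, which has Jacobian one. Tonelli's theorem then shows that
\[
 c\longmapsto\int|\mathcal K_{j,k}^{\rm mix}(\Xi_c)|\,\dd\widehat\Xi_{j,k}
\]
is measurable and has integral over $E$ equal to $\int_E I^{1,1}_{j,k}$. Since $E$ is arbitrary, this function coincides with $I_{j,k}^{1,1}(c)$ for almost every $c$.

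By \eqref{mixed-center-unweighted-Lt} with $\kappa=1$, the density $I^{1,1}_{j,k}$ is integrable, hence finite almost everywhere. So the integral defining $\mathcal G_{\tau,j,k}(c)$ converges absolutely for almost every $c$. Measurability of $\mathcal G_{\tau,j,k}$ follows from Fubini's theorem applied to the jointly measurable, absolutely integrable function
\[
 e^{i\tau P_{j,k}(\Xi_c)}\mathcal K^{\rm mix}_{j,k}(\Xi_c).
\]
Because the phase has modulus one, $|\mathcal G_{\tau,j,k}|\le I^{1,1}_{j,k}$, which is \eqref{rough-center-kernel-pointwise-bound}. The support claim follows from the fixed compact support of $I^{1,1}_{j,k}$ stated in Lemma~\ref{lem-mixed-center-densities}; take $Z_{j,k}$ to be that support. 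Applying \eqref{mixed-center-unweighted-Lt} with $\kappa=(r/2)'<\infty$ gives \eqref{rough-center-kernel-Lmprime}, and the constant is independent of $\tau$ because the oscillation was discarded.

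\emph{Pairing.} Let $H\in L^{r/2}_{\rm loc}$. Since both sides of \eqref{rough-center-kernel-pairing} only see $c\in Z_{j,k}$, we may replace $H$ by $H\mathbf 1_{Z_{j,k}}$. By H\"older's inequality,
\[
 \int|H|\,I^{1,1}_{j,k}\le\|H\|_{L^{r/2}(Z_{j,k})}\,\|I^{1,1}_{j,k}\|_{L^{(r/2)'}}<\infty.
\]
The push-forward identity with $h=|H|$ shows that this quantity equals the absolute integral of the right-hand side of \eqref{rough-center-kernel-pairing}, so the right-hand side converges absolutely. We then use Fubini's theorem: undo the change of variables on the right-hand side, integrate first in $\widehat\Xi_{j,k}$ at fixed $c$, and recognise $\mathcal G_{\tau,j,k}(c)$. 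The left-hand side is absolutely convergent by the pointwise bound.

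\emph{Main obstacle.} The only delicate point is being careful that the substitution $t=t_c(\widehat\Xi_{j,k})$ turns the exact push-forward identity into a statement about fiber integrals valid for almost every $c$, rather than only about integrated quantities. Testing the identity against indicators of Borel sets $E$, as above, handles this. Everything else reduces to the already established density bounds.
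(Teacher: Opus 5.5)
Your proposal is correct and follows the paper's proof essentially step by step. The unimodular substitution together with the push-forward identity for $F=G=1$ gives the a.e.\ fiber identity $\int|\mathcal K_{j,k}^{\rm mix}(\Xi_c)|\,\dd\widehat\Xi_{j,k}=I_{j,k}^{1,1}(c)$, and from it you obtain absolute convergence, measurability, the pointwise bound, the $L^{(r/2)'}$ estimate via \eqref{mixed-center-unweighted-Lt}, and the pairing via H\"older and Fubini, exactly as the paper does. Your testing against indicators $\mathbf 1_E$ only spells out the justification of that fiber identity, which the paper asserts directly.
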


\begin{proof}
Apply the determinant-one substitution
\eqref{rough-center-change-variables} to the positive mixed measure.  The
exact push-forward identity \eqref{exact-mixed-center-pushforward}, with
$F=G=1$, gives for almost every $c$
\begin{equation}\label{rough-center-absolute-fiber-identity}
 \int
   |\mathcal K_{j,k}^{\rm mix}(\Xi_c)|
   \dd\widehat\Xi_{j,k}
 =I_{j,k}^{1,1}(c).
\end{equation}
The right-hand side belongs to $L^1$ and is measurable by Lemma
\ref{lem-mixed-center-densities}.  Fubini--Tonelli therefore shows that the
integral defining $\mathcal G_{\tau,j,k}$ is absolutely convergent for almost every
$c$ and defines a measurable function.  Since the residual phase is real,
\eqref{rough-center-kernel-pointwise-bound} follows from
\eqref{rough-center-absolute-fiber-identity}.  The support assertion follows
from the corresponding assertion for $I_{j,k}^{1,1}$.

The exponent $(r/2)'$ in \eqref{product-duality-exponents} is finite because
$p>1$.  Taking $\kappa=(r/2)'$ in \eqref{mixed-center-unweighted-Lt} and using
\eqref{rough-center-kernel-pointwise-bound} proves
\eqref{rough-center-kernel-Lmprime}.

If $H\in L^{r/2}_{\rm loc}$, then H\"older's inequality,
the fixed support $Z_{j,k}$, and
\eqref{rough-center-absolute-fiber-identity} give
\[
 \int |H(c)|I_{j,k}^{1,1}(c)\dd^2c
 \leq \|H\|_{L^{r/2}(Z_{j,k})}
       \|I_{j,k}^{1,1}\|_{L^{(r/2)'}}<\infty.
\]
Thus Fubini's theorem and the inverse of
\eqref{rough-center-change-variables} are legitimate, and they give
\eqref{rough-center-kernel-pairing}.  The same majorant proves absolute
convergence of both sides.
\end{proof}

\subsection{Mixed cancellation and the remaining error}\label{subsec-mixed-reduction}

The density bounds control the principal amplitude and the three errors involving $\varphi$, $\varphi A$, or $\varphi\widetilde A$ in the center input. We isolate the product error in the following proof. Its duality estimate, Proposition~\ref{prop-direct-product-error-duality}, is proved in the remainder of this section.

\begin{prop}\label{prop-fixed-mixed-cancellations}
Let $1<p<2$.  For every fixed $j,k\geq0$ and every $\varphi\in C_0^\infty(\Omega)$,
\begin{equation}\label{fixed-mixed-cancellation}
        \mathcal M_{j,k,\tau}=o(1)
        \qquad (\tau\to+\infty).
\end{equation}
\end{prop}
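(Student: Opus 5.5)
We start from the fixed-order identity \eqref{rough-mixed-born-output-identity} of Proposition~\ref{prop-direct-rough-fixed-order} and insert the exact bracket decomposition \eqref{rough-mixed-born-bracket-decomp}. This splits $\mathcal M_{j,k,\tau}$ into five integrals, one for each term of the decomposition: the principal term, the three terms carrying $H_1^\tau$, $H_{\widetilde A}^\tau$ and $H_A^\tau$, and the product error. All five are absolutely convergent by that proposition. We treat them in this order.

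\emph{Principal term.} We expand $\varphi(c)(A(s)-A(c))(\widetilde A(t)-\widetilde A(c))$ into four products and bound each one against a positive center density from Lemma~\ref{lem-mixed-center-densities}.
\begin{itemize}
\item $|\varphi(c)||A(s)||\widetilde A(t)|$ is bounded by $\|\varphi\|_\infty\|I^{|A|,|\widetilde A|}_{j,k}\|_{L^1}$, using \eqref{mixed-center-two-weighted-L1}.
\item $|A(s)||\varphi\widetilde A(c)|$ is bounded by $\|\varphi\widetilde A\|_{L^2}\|I^{|A|,1}_{j,k}\|_{L^2}$, using \eqref{mixed-center-left-weighted-L2}.
\item The symmetric term with $|\widetilde A(t)||\varphi A(c)|$ is handled the same way, using \eqref{mixed-center-right-weighted-L2}.
\item $|\varphi A\widetilde A(c)|$ is bounded by $\|\varphi A\widetilde A\|_{L^{r/2}}\|I^{1,1}_{j,k}\|_{L^{(r/2)'}}$, using \eqref{mixed-center-unweighted-Lt}.
\end{itemize}
So the principal amplitude is in $L^1$ of all variables and does not depend on $\tau$. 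The residual phase $P_{j,k}$ is nondegenerate on the fibres of $c_{j,k}$, and those fibres have positive dimension even when $j=k=0$, because $(x,s,t)$ gives a signed list of three variables. Lemma~\ref{lem-quadratic-RL} then applies, with the passive variable $c_{j,k}$ (after the determinant-one change of variables), and shows that this term is $o(1)$.

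\emph{The three linear errors.} We use the same push-forward densities:
\begin{itemize}
\item the $H_1^\tau$ term is at most $\|H_1^\tau\|_\infty\|I^{|A|,|\widetilde A|}_{j,k}\|_{L^1}$;
\item the $H_{\widetilde A}^\tau$ term is at most $\|H_{\widetilde A}^\tau\|_{L^2}\|I^{|A|,1}_{j,k}\|_{L^2}$;
\item the $H_A^\tau$ term is bounded symmetrically.
\end{itemize}
Each of these tends to zero by Lemma~\ref{lem-Etau}, since $\varphi A,\varphi\widetilde A\in L^2$ because $r>2$.

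\emph{Product error: the main obstacle.} Lemma~\ref{lem-rough-oscillatory-center-kernel} writes this term as $\int H^\tau_{A\widetilde A}(c)\,\mathcal G_{\tau,j,k}(c)\,\dd^2c$. Since $r/2<2$ in general, strong convergence of $H^\tau_{A\widetilde A}$ is unavailable. We argue by density instead. Write $f=\varphi A\widetilde A\in L^{r/2}_c$ and approximate it by $g\in C_0^\infty$ with $\|f-g\|_{L^{r/2}}<\epsilon$. The error then splits into three pieces:
\begin{enumerate}
\item $\int(g-f)\mathcal G_\tau$, which is at most $C\epsilon$ by \eqref{rough-center-kernel-Lmprime};
\item $\int(E_\tau g-g)\mathcal G_\tau$, which is at most $\|E_\tau g-g\|_\infty\|I^{1,1}_{j,k}\|_{L^1}\to0$;
\item $\int E_\tau(f-g)\,\mathcal G_\tau$, which needs a bound $|\int E_\tau h\,\mathcal G_{\tau,j,k}|\le C\|h\|_{L^{r/2}}$ uniform in $\tau$. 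This is the content of Proposition~\ref{prop-direct-product-error-duality}.
\end{enumerate}

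\emph{Plan for the uniform bound.} Move $E_\tau$ onto the kernel by the bilinear transpose \eqref{Etau-transpose-def}, and integrate $c$ back out through \eqref{rough-center-kernel-pairing}. Integrating out $c$ reinstates the center-dependent phase $\psi_{z_0}$ in every variable. The functional then becomes
\[
\frac{\tau}{\pi}\int h(z_0)\int Q(x)e^{-i\tau\psi_{z_0}(x)}\,\mathfrak L_j^{z_0,\tau}[\chi]\,\mathfrak R_k^{z_0,\tau}[\chi](x)\,\dd^2x\,\dd^2z_0,
\]
in which both branches now end at the smooth cutoff $\chi$ instead of $A-A(z_0)$ and $\widetilde A-\widetilde A(z_0)$.

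By Lemma~\ref{lem-terminal-cauchy-gain}, each terminal Cauchy transform of $\chi$ is $O(\tau^{-1/2})$ in $L^\kappa$ for every finite $\kappa$, uniformly in $z_0$. The inner Born blocks are $\tau$-uniformly bounded on $L^\kappa$ for large $\kappa$, by the $K_R*L^p$ bounds. Hölder's inequality in $x$ against $Q\in L^p$ then gives a bound $C\tau\cdot\tau^{-1/2}\cdot\tau^{-1/2}$ for the $x$-integral, uniformly in $z_0$. Pairing with $h$ requires $|h|$ to be integrable on the compact center set, which holds since $h\in L^{r/2}$.

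The delicate point is the middle blocks. They have to be kept bounded $L^{\kappa}\to L^{\kappa}$ with large $\kappa$ without losing powers of $\tau$, and the exponents must be chosen so that $Q\in L^p$ pairs with the product of the two branches in $L^{p'}$. If a direct $L^\kappa$ bound fails for some block, I would fall back on the interpolated $L^\infty$ bounds of Lemma~\ref{lem-bukhgeim-interpolation}, which carry only decaying factors and are therefore harmless.

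Combining the three pieces gives $\limsup_{\tau\to\infty}|\mathcal M_{j,k,\tau}|\le C\epsilon$ for every $\epsilon>0$, and hence $\mathcal M_{j,k,\tau}=o(1)$.
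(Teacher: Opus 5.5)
Your proposal is correct and follows the paper's proof essentially step for step: the five-term split, the four-placement density bounds plus Lemma~\ref{lem-quadratic-RL} for the principal term, the $L^\infty$/$L^2$ pairings for the three linear errors, and, for the product error, transposition of $E_\tau$ onto $\mathcal G_{\tau,j,k}$, the log-free terminal gains $\tau^{-1/2}$ from Lemma~\ref{lem-terminal-cauchy-gain}, and smooth approximation in $L^{r/2}$. The step you call delicate is in fact routine. As in Lemma~\ref{lem-reverse-branch-terminal-gain}, each two-Cauchy block maps $L^\kappa$ into $L^\infty$ whenever $\kappa>p'$, because $K_R\in L^{a'}$ for every finite $a'$, and taking $\kappa>2p'$ suffices for the H\"older pairing with $Q\in L^p$; so the fallback to Lemma~\ref{lem-bukhgeim-interpolation} is unnecessary, and also in the paper's notation the terminal input is $\mathfrak L_j^{z_0,\tau}[1]$, since $\chi$ is already built into $\mathfrak L_0$.
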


\begin{proof}
By Proposition~\ref{prop-direct-rough-fixed-order}, the mixed functional is \eqref{fixed-mixed-signed-output-integral}, with the five-term bracket \eqref{rough-mixed-born-bracket-decomp}.

Expand the principal term according to the four possible placements of $A$ and $\widetilde A$. The contribution containing $A(s)\widetilde A(t)$ satisfies
\begin{align}
 &\int |\mathcal K_{j,k}^{\rm mix}(\Xi)|
 |\varphi(c_{j,k})|\,|A(s)|\,|\widetilde A(t)|
 \dd^2x\dd\Xi_j^L\dd\Xi_k^R
 \notag\\
 &\qquad\leq \|\varphi\|_{L^\infty}
 \|I_{j,k}^{|A|,|\widetilde A|}\|_{L^1}<\infty
 \label{fixed-mixed-terminal-terminal-L1}
\end{align}
by \eqref{mixed-center-two-weighted-L1}.  For the contribution containing $A(s)\widetilde A(c_{j,k})$,
the center push-forward and Cauchy--Schwarz give
\begin{align}
 &\int |\mathcal K_{j,k}^{\rm mix}(\Xi)|
 |\varphi(c_{j,k})|\,|A(s)|\,|\widetilde A(c_{j,k})|
 \dd^2x\dd\Xi_j^L\dd\Xi_k^R
 \notag\\
 &\qquad=
 \int |\varphi(c)\widetilde A(c)|I_{j,k}^{|A|,1}(c)\dd^2c
 \notag\\
 &\qquad\leq
 \|\varphi\widetilde A\|_{L^2}
 \|I_{j,k}^{|A|,1}\|_{L^2}<\infty.
 \label{fixed-mixed-left-terminal-right-center-L1}
\end{align}
Here we used \eqref{mixed-center-left-weighted-L2}; the symmetric term is controlled by \eqref{mixed-center-right-weighted-L2}.  Pair the contribution containing $A(c_{j,k})\widetilde A(c_{j,k})$ in the conjugate exponents
$r/2$ and $(r/2)'$.  It satisfies
\begin{align}
 &\int |\mathcal K_{j,k}^{\rm mix}(\Xi)|
 |\varphi(c_{j,k})A(c_{j,k})\widetilde A(c_{j,k})|
 \dd^2x\dd\Xi_j^L\dd\Xi_k^R
 \notag\\
 &\qquad=
 \int |\varphi(c)A(c)\widetilde A(c)|I_{j,k}^{1,1}(c)\dd^2c
 \notag\\
 &\qquad\leq
 \|\varphi A\widetilde A\|_{L^{r/2}}
 \|I_{j,k}^{1,1}\|_{L^{(r/2)'}}<\infty
 \label{fixed-mixed-center-center-L1}
\end{align}
by \eqref{mixed-center-unweighted-Lt}.  Therefore the principal term in
\eqref{rough-mixed-born-bracket-decomp} is an $L^1$ amplitude in all fixed-order
variables.  By \eqref{exact-mixed-phase-split} and
Lemma~\ref{lem-signed-quadratic-normal-form}, the residual phase
$P_{j,k}$ is nondegenerate on each fiber $c_{j,k}=\mathrm{const}$.  Every
mixed variable, including the root $x$, occurs in the signed list, so we apply
Lemma~\ref{lem-quadratic-RL} with the single passive variable $c_{j,k}$.  The
fiber has real dimension $4(j+k+1)\ge4$; already when $j=k=0$ the constraint
$c=-x+s+t$ leaves two free complex variables.  Consequently
\begin{equation}\label{fixed-mixed-principal-decay}
 \int e^{i\tau P_{j,k}}\mathcal K_{j,k}^{\rm mix}
 \varphi(c_{j,k})(A(s)-A(c_{j,k}))
 (\widetilde A(t)-\widetilde A(c_{j,k}))
 \longrightarrow0.
\end{equation}

For the error involving the test function,
\begin{align}
 &\left|\int e^{i\tau P_{j,k}}\mathcal K_{j,k}^{\rm mix}
 A(s)\widetilde A(t)H_1^\tau(c_{j,k})\right|
 \notag\\
 &\qquad\leq
 \|H_1^\tau\|_{L^\infty}
 \|I_{j,k}^{|A|,|\widetilde A|}\|_{L^1}=o(1),
 \label{fixed-mixed-smooth-test-error}
\end{align}
because $H_1^\tau=E_\tau\varphi-\varphi$ tends to zero uniformly by
Lemma~\ref{lem-Etau}.

For the error involving $H_{\widetilde A}^\tau$, the center push-forward and \eqref{mixed-center-left-weighted-L2} yield
\begin{align}
 &\left|\int e^{i\tau P_{j,k}}\mathcal K_{j,k}^{\rm mix}
 A(s)H_{\widetilde A}^\tau(c_{j,k})\right|
 \notag\\
 &\qquad\leq
 \|I_{j,k}^{|A|,1}\|_{L^2}
 \|H_{\widetilde A}^\tau\|_{L^2}=o(1).
 \label{fixed-mixed-left-one-factor-error}
\end{align}
Indeed, $\varphi\widetilde A\in L^2_c$ and Lemma~\ref{lem-Etau} gives strong
$L^2$ convergence.  The symmetric estimate
\begin{equation}\label{fixed-mixed-right-one-factor-error}
 \left|\int e^{i\tau P_{j,k}}\mathcal K_{j,k}^{\rm mix}
 \widetilde A(t)H_A^\tau(c_{j,k})\right|
 \leq
 \|I_{j,k}^{1,|\widetilde A|}\|_{L^2}
 \|H_A^\tau\|_{L^2}=o(1)
\end{equation}
follows from \eqref{mixed-center-right-weighted-L2}.

For the product error, the pairing identity in Lemma~\ref{lem-rough-oscillatory-center-kernel} gives
\[
        \int H_{A\widetilde A}^\tau(c)\mathcal G_{\tau,j,k}(c)\dd^2c.
\]
The remaining assertion is therefore Proposition~\ref{prop-direct-product-error-duality}, applied with $F=\varphi A\widetilde A\in L^{r/2}_c$. That proposition, proved below, completes \eqref{fixed-mixed-cancellation}.
\end{proof}

\subsection{Transposing the product error}\label{subsec-product-error}

Fix the orders $j,k$ and write $F=\varphi A\widetilde A$. The only remaining pairing is
\[
 \int (E_\tau F-F)(c)\mathcal G_{\tau,j,k}(c)\,\dd^2c.
\]
The $L^2$ convergence of $E_\tau$ need not apply to this input. Instead, transpose the average in the bilinear pairing. The phase identity \eqref{exact-mixed-phase-split} then restores the original mixed phase:
\[
 \psi_{z_0}(c_{j,k})+P_{j,k}
 =-\psi_{z_0}(x)+\Phi_j^L(z_0)+\Phi_k^R(z_0).
\]
Both factors $A$ and $\widetilde A$ are already in $F$, so neither remains at a branch terminal. Put
\begin{equation}\label{direct-transpose-branch-def}
 \Pi_j^{z_0,\tau}=\mathfrak L_j^{z_0,\tau}[1],
 \qquad \widetilde\Pi_k^{z_0,\tau}=\mathfrak R_k^{z_0,\tau}[1].
\end{equation}
The input $1$ leaves the smooth cutoff $\chi$ in each terminal Cauchy transform. Lemma~\ref{lem-direct-rough-center-transpose} will prove the exact identity
\begin{equation}\label{direct-rough-center-transpose-identity}
 (E_\tau^t\mathcal G_{\tau,j,k})(z_0)
 =\frac{\tau}{\pi}\int_{\C}
 Q(x)e^{-i\tau\psi_{z_0}(x)}
 \Pi_j^{z_0,\tau}(x)\widetilde\Pi_k^{z_0,\tau}(x)\,\dd^2x.
\end{equation}
The estimate proceeds from each smooth terminal cutoff toward the root: the terminal Cauchy transform gains $\tau^{-1/2}$, the first two-Cauchy block places a positive-order branch in $L^\infty$, and later blocks preserve this bound. The two gains offset the prefactor $\tau$.

\begin{lem}\label{lem-reverse-branch-terminal-gain}
Let $1<p<2$, choose
\begin{equation}\label{reverse-branch-s0-choice}
        b_0>2p',
\end{equation}
and fix compact sets of centers and spatial variables
$Z\Subset\C$ and $Y\Subset\C$.  Assume that $Y$ contains the supports of a
fixed function $f\in C_0^\infty(\C)$, of all coefficient functions below, and
of all cutoffs occurring in the recursion.  Let
$F_1,\ldots,F_n\in L^p_c(\C)$, put
\begin{equation}\label{reverse-branch-terminal-leaf}
        \Pi_0=\mathcal C_{\tau,z_0}^{\varepsilon_0,\mathfrak d_0}f,
\end{equation}
where $\varepsilon_0\in\{\pm1\}$ and
$\mathfrak d_0\in\{\partial,\bar\partial\}$, and for $\ell\geq0$ define
\begin{equation}\label{transposed-branch-recursion}
\Pi_{\ell+1}
=\mathcal C_{\tau,z_0}^{\varepsilon_{\ell,1},\mathfrak d_{\ell,1}}
 \left(
  \chi_{\ell,1}\,
  \mathcal C_{\tau,z_0}^{\varepsilon_{\ell,2},\mathfrak d_{\ell,2}}
  \bigl(\chi_{\ell,2}F_{\ell+1}\Pi_\ell\bigr)
 \right).
\end{equation}
Here the signs and Cauchy orientations are arbitrary and the cutoffs are fixed.
Then, uniformly for $z_0\in Z$ and $\tau\geq2$,
\begin{equation}\label{transposed-branch-terminal-bound}
        \|\Pi_0\|_{L^{b_0}(Y)}
        \leq C\tau^{-1/2}\|f\|_{W^{1,a_{b_0}}},
        \qquad
        \frac1{a_{b_0}}=\frac12+\frac1{b_0},
\end{equation}
and, for every $1\leq\ell\leq n$,
\begin{equation}\label{transposed-branch-Linf-bound}
        \|\Pi_\ell\|_{L^\infty(Y)}
        \leq C_\ell\tau^{-1/2}\|f\|_{W^{1,a_{b_0}}}
        \prod_{h=1}^{\ell}\|F_h\|_{L^p}.
\end{equation}
The constants may depend on $Y,Z$, the order, and the fixed cutoffs, but not on
$z_0$ or $\tau$.
\end{lem}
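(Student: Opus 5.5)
The terminal bound \eqref{transposed-branch-terminal-bound} is a direct application of Lemma~\ref{lem-terminal-cauchy-gain}. Take $\kappa=b_0>2$ and $X_0=Y$, $Z_0=Z$. Since $a_\kappa=a_{b_0}$, that lemma gives
\[
\|\Pi_0\|_{L^{b_0}(Y)}\le C\tau^{-1/2}\|f\|_{W^{1,a_{b_0}}},
\]
uniformly in the sign $\varepsilon_0$, the orientation $\mathfrak d_0$, $z_0\in Z$ and $\tau\ge2$. No cancellation is needed after this point: the inserted blocks are estimated with absolute kernels, so the oscillatory factors play no further role.

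\emph{First block.} For $\ell=0\to1$, I would use the positive kernel bound
\[
|\Pi_1(x)|\le C\int K_R(x-y)\,|F_1(y)|\,|\Pi_0(y)|\,\dd^2y,
\]
obtained, as in \eqref{S-coefficient-continuity}, by integrating out the intermediate variable with the bounded cutoff $\chi_{\ell,1}$ and bounding $|\chi_{\ell,2}|$. Then Hölder's inequality gives
\[
\|\Pi_1\|_{L^\infty}\le C\,\|K_R\|_{L^{\gamma}}\,\|F_1\|_{L^p}\,\|\Pi_0\|_{L^{b_0}(Y)},
\qquad \frac1\gamma=1-\frac1p-\frac1{b_0}.
\]
The choice $b_0>2p'$ is what makes this work. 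It gives $1/b_0<1/(2p')$, hence $1/\gamma>1-1/p-1/(2p')=1/(2p')>0$, so $\gamma$ is finite and $K_R\in L^\gamma$ by \eqref{kernel-basic-facts-general}. (Any $b_0>p'$ would already suffice here.) Since $\Pi_0$ is evaluated only on $Y$, which contains the supports of $F_1$ and of the cutoffs, only $\|\Pi_0\|_{L^{b_0}(Y)}$ enters.

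\emph{Later blocks.} For $\ell\ge1$, the same pointwise majorant together with $K_R\in L^{p'}$ gives
\[
\|\Pi_{\ell+1}\|_{L^\infty(Y)}\le C\,\|K_R\|_{L^{p'}}\,\|F_{\ell+1}\|_{L^p}\,\|\Pi_\ell\|_{L^\infty(Y)}.
\]
This is exactly the third mapping in Lemma~\ref{lem-localized-kernel-mapping}. Induction on $\ell$ then yields \eqref{transposed-branch-Linf-bound}. The constants depend only on $R$, the cutoffs and the order, so they are uniform in $z_0$ and $\tau$.

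The main point to check is that every variable stays inside the compact set where $k_R$ majorizes the Cauchy kernels, i.e.\ that $|z-w|\le R$ holds for all pairs that occur. This is ensured by $Y$ containing all supports, by choosing $R\ge\operatorname{diam}Y$, and by evaluating $\Pi_\ell$ only on $Y$. Measurability and absolute convergence of the nested integrals follow from the same majorants. The only delicate step is the first block, where the exponent bookkeeping above must give a finite $\gamma$.
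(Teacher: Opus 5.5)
Your proposal is correct and follows the paper's proof essentially step for step: the terminal bound comes from Lemma~\ref{lem-terminal-cauchy-gain} with $\kappa=b_0$; each block is controlled through the majorant $|\Pi_{\ell+1}|\le C\,K_R*(\mathbf 1_Y|F_{\ell+1}\Pi_\ell|)$; the first block uses H\"older with $K_R\in L^{\gamma}$, where your $\gamma$ is exactly the paper's $a'$ with $1/a=1/p+1/b_0$; and the later blocks use $K_R\in L^{p'}$. Your remark that only $b_0>p'$ is needed here is also right: the stronger condition $b_0>2p'$ is used only in the order-zero case of Lemma~\ref{lem-direct-rough-center-transpose}.
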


\begin{proof}
The terminal estimate \eqref{transposed-branch-terminal-bound} follows from
Lemma~\ref{lem-terminal-cauchy-gain}, applied with both the target set and the
support sets contained in the fixed compact set $Y$.

To propagate this estimate through a two-Cauchy block, choose $R>0$, depending only
on $Y$ and the fixed cutoff supports, so large that every difference of two
variables occurring in a block lies in $B(0,R)$.  Taking absolute values in
\eqref{transposed-branch-recursion} gives, for $x\in Y$,
\begin{equation}\label{reverse-branch-local-majorant}
        |\Pi_{\ell+1}(x)|
        \leq C\,K_R*
        \bigl(\mathbf 1_Y|F_{\ell+1}\Pi_\ell|\bigr)(x),
        \qquad K_R=k_R*k_R.
\end{equation}

For the first block, define $a$ by
\begin{equation}\label{reverse-branch-first-exponent}
        \frac1a=\frac1p+\frac1{b_0}.
\end{equation}
The choice $b_0>2p'$ implies in particular $b_0>p'$, and hence
$1/a<1$, so $a>1$ and $a'<\infty$.  H\"older's inequality,
\eqref{transposed-branch-terminal-bound}, and $K_R\in L^{a'}$ yield
\begin{align*}
 \|\Pi_1\|_{L^\infty(Y)}
 &\leq C\|K_R\|_{L^{a'}}
       \|F_1\Pi_0\|_{L^a(Y)}\\
 &\leq C\tau^{-1/2}\|f\|_{W^{1,a_{b_0}}}\|F_1\|_{L^p}.
\end{align*}
For every later block, \eqref{reverse-branch-local-majorant},
$K_R\in L^{p'}$, and the induction hypothesis give
\begin{align*}
 \|\Pi_{\ell+1}\|_{L^\infty(Y)}
 &\leq C\|K_R\|_{L^{p'}}
       \|F_{\ell+1}\Pi_\ell\|_{L^p(Y)}\\
 &\leq C\|F_{\ell+1}\|_{L^p}
       \|\Pi_\ell\|_{L^\infty(Y)}.
\end{align*}
Iterating this inequality proves \eqref{transposed-branch-Linf-bound}.
\end{proof}

\begin{cor}
\label{cor-reverse-born-branch-gains}
Fix the Born orders $j,k\geq0$ and one compact set $Y$ containing
$\operatorname{supp}Q$, the supports of $q$ and $\widetilde q$, and every
cutoff support in the two branches. The left branch $\Pi_j=\mathfrak L_j[1]$, built from $j$
blocks with coefficient $q$, satisfies, uniformly on the center compact set
$Z$,
\begin{align}
 \|\Pi_0^{z_0,\tau}\|_{L^{b_0}(Y)}
 &\leq C\tau^{-1/2},
 \label{reverse-left-zero-gain}\\
 \|\Pi_j^{z_0,\tau}\|_{L^\infty(Y)}
 &\leq C_j\tau^{-1/2}\|q\|_{L^p}^j,
 \qquad j\geq1.
 \label{reverse-left-positive-gain}
\end{align}
The right branch with terminal input $1$ satisfies
\begin{align}
 \|\widetilde\Pi_0^{z_0,\tau}\|_{L^{b_0}(Y)}
 &\leq C\tau^{-1/2},
 \label{reverse-right-zero-gain}\\
 \|\widetilde\Pi_k^{z_0,\tau}\|_{L^\infty(Y)}
 &\leq C_k\tau^{-1/2}\|\widetilde q\|_{L^p}^k,
 \qquad k\geq1.
 \label{reverse-right-positive-gain}
\end{align}
The signs, Cauchy orientations, and fixed terminal cutoffs may be arbitrary.
\end{cor}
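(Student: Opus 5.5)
The corollary should be a direct specialization of Lemma~\ref{lem-reverse-branch-terminal-gain}. The plan is to match the recursive definitions in Definition~\ref{dfn-exact-born-branches}, with terminal input $H=1$, to the abstract recursion \eqref{transposed-branch-recursion}.

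For the left branch, \eqref{exact-left-branch-0} gives $\Pi_0^{z_0,\tau}=\mathfrak L_0^{z_0,\tau}[1]=\mathcal C_{\tau,z_0}^{+,\partial}(\chi)$. This is \eqref{reverse-branch-terminal-leaf} with $f=\chi\in C_0^\infty(\C)$, $\varepsilon_0=+1$ and $\mathfrak d_0=\partial$. Since $\chi$ is fixed, $\|\chi\|_{W^{1,a_{b_0}}}$ is a fixed constant, so \eqref{transposed-branch-terminal-bound} yields \eqref{reverse-left-zero-gain}.

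The recursion \eqref{exact-left-branch-rec} reads $\mathfrak L_{n+1}[1]=\mathcal C^{+,\partial}(\chi\,\mathcal C^{-,\bar\partial}(q\,\mathfrak L_n[1]))$. This is \eqref{transposed-branch-recursion} with the following choices:
\begin{itemize}
\item outer sign and orientation $(+,\partial)$, inner sign and orientation $(-,\bar\partial)$;
\item cutoffs $\chi_{\ell,1}=\chi$ and $\chi_{\ell,2}$ a fixed cutoff equal to one on $\operatorname{supp}q\subset\overline\Omega$;
\item coefficients $F_{\ell+1}=q\in L^p_c$.
\end{itemize}
Inserting $\chi_{\ell,2}$ is harmless because $q$ vanishes outside $\Omega$, so $\chi_{\ell,2}q=q$. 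It is used only to fit the form of the lemma.

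Given these identifications, \eqref{transposed-branch-Linf-bound} with $\ell=j$ gives \eqref{reverse-left-positive-gain}, where $\prod_{h}\|F_h\|_{L^p}=\|q\|_{L^p}^j$.

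The right branch is handled the same way, using \eqref{exact-right-branch-0} and \eqref{exact-right-branch-rec}:
\begin{itemize}
\item leaf $\mathcal C^{+,\bar\partial}(\chi)$;
\item blocks with outer $(+,\bar\partial)$, inner $(-,\partial)$, and coefficient $\widetilde q$.
\end{itemize}
This gives \eqref{reverse-right-zero-gain} and \eqref{reverse-right-positive-gain}. The lemma allows arbitrary signs, orientations and fixed cutoffs, so the final sentence of the corollary is automatic.

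Two routine points need checking. The first is uniformity in $z_0$. Here $Z$ is the compact center set, e.g.\ $\operatorname{supp}\varphi\subset\Omega$. One also takes $Y$ large enough to contain $\operatorname{supp}Q$, $\operatorname{supp}q$, $\operatorname{supp}\widetilde q$ and $\operatorname{supp}\chi$, as required by the lemma's hypothesis on $Y$.

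The second, and the only real subtlety, is that the lemma's bounds are stated on $Y$, while the branches are later evaluated at roots $x\in\operatorname{supp}Q\subset Y$. Intermediate branch values are used only after multiplication by $q$ or $\widetilde q$ (supported in $Y$), or at the root $x\in\operatorname{supp}Q\subset Y$. Hence restricting norms to $Y$ loses nothing. The recursion is exactly the one in the lemma, whose majorant \eqref{reverse-branch-local-majorant} already incorporates the indicator $\mathbf 1_Y$.

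With these identifications the corollary follows with constants depending only on $j$, $k$, $Y$, $Z$, $b_0$ and the fixed cutoff $\chi$.
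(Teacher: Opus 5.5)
Your proposal is correct and is essentially the paper's proof: the paper applies Lemma~\ref{lem-reverse-branch-terminal-gain} with $F_h=q$ on the left and $F_h=\widetilde q$ on the right, noting that the lemma is uniform in signs and Cauchy orientations. Your explicit matching of the leaf $f=\chi$ and of the block cutoffs (using $\chi_{\ell,2}q=q$, since $q$ vanishes outside $\Omega$) spells out the identification that the paper leaves implicit.
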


\begin{proof}
Apply Lemma~\ref{lem-reverse-branch-terminal-gain} with every $F_h=q$ on the left and
every $F_h=\widetilde q$ on the right.  The lemma is uniform in the signs and
Cauchy orientations, so it applies to the recursions defining
$\mathfrak L_j[1]$ and $\mathfrak R_k[1]$.
\end{proof}

\paragraph{The order $j=k=0$.}
At the lowest mixed order the signed outputs are simply $s$ and $t$, so that $c=-x+s+t$.  Truncating the Cauchy kernels and applying Fubini, as in the general argument below, gives
\begin{equation}\label{model-zero-transpose}
 (E_\tau^t\mathcal G_{\tau,0,0})(z_0)
 =\frac{\tau}{\pi}\int Q(x)e^{-i\tau\psi_{z_0}(x)}
 \Pi_0^{z_0,\tau}(x)\widetilde\Pi_0^{z_0,\tau}(x)\,\dd^2x.
\end{equation}
Both order-zero branches are terminal oscillatory Cauchy transforms of the fixed cutoff $\chi$.  If $b_0>2p'$, Corollary~\ref{cor-reverse-born-branch-gains} gives
\[
 \|\Pi_0^{z_0,\tau}\|_{L^{b_0}}
 +\|\widetilde\Pi_0^{z_0,\tau}\|_{L^{b_0}}
 \leq C\tau^{-1/2}.
\]
Since $b_0/2>p'$ and the spatial variables lie in a fixed compact set, H\"older's inequality bounds the integral in \eqref{model-zero-transpose} by $C\tau^{-1}$, cancelling the prefactor $\tau$.  At positive order, the first two-Cauchy block places the corresponding branch in $L^\infty$ and the same estimate applies.

We may now transpose the center kernel without regularizing the coefficients.

\begin{lem}
\label{lem-direct-rough-center-transpose}
Let $1<p<2$, fix $j,k\geq0$, and let $Z\Subset\C$. The exact identity \eqref{direct-rough-center-transpose-identity} holds for almost every $z_0\in Z$. Moreover,
\begin{equation}\label{direct-rough-center-transpose-Linf}
 \|E_\tau^t\mathcal G_{\tau,j,k}\|_{L^\infty(Z)}
 \leq C_{j,k,Z}
 \|Q\|_{L^p}\|q\|_{L^p}^j
 \|\widetilde q\|_{L^p}^k,
\end{equation}
uniformly for $\tau\geq2$.
\end{lem}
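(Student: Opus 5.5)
The identity is obtained by undoing the center push-forward and restoring the original mixed phase; the $L^\infty$ bound then follows from Corollary~\ref{cor-reverse-born-branch-gains} and H\"older's inequality.

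\emph{Step 1: regularize and write out the transpose.} Replace every Cauchy factor by its truncation \eqref{rough-cauchy-truncation} at level $\delta>0$. At fixed $\delta$ all kernels are bounded on the fixed compact supports, so the definitions are absolutely convergent. Let $\mathcal G^\delta_{\tau,j,k}$ denote the truncated center kernel. Insert its definition \eqref{rough-oscillatory-center-kernel-def} into \eqref{Etau-transpose-def} and undo the determinant-one change of variables \eqref{rough-center-change-variables}. This gives
\[
 (E_\tau^t\mathcal G^\delta_{\tau,j,k})(z_0)
 =\frac\tau\pi\int e^{i\tau(\psi_{z_0}(c_{j,k})+P_{j,k}(\Xi))}
 \mathcal K^{{\rm mix},\delta}_{j,k}(\Xi)\,\dd^2x\,\dd\Xi_j^L\,\dd\Xi_k^R .
\]
The phase identity \eqref{exact-mixed-phase-split} turns the exponent into $-\psi_{z_0}(x)+\Phi_j^L(z_0)+\Phi_k^R(z_0)$. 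Repeated Fubini, with the branch expansions \eqref{exact-left-expanded}--\eqref{exact-right-expanded} for $H=1$, then yields the truncated form of \eqref{direct-rough-center-transpose-identity}, with $\Pi_j^{\delta}$ and $\widetilde\Pi_k^{\delta}$ in place of $\Pi_j$ and $\widetilde\Pi_k$.

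\emph{Step 2: the uniform bound.} The truncated branches satisfy Lemma~\ref{lem-reverse-branch-terminal-gain} uniformly in $\delta$. The positive-order recursion uses only the absolute majorant \eqref{reverse-branch-local-majorant}, which truncation preserves. The terminal gain must be checked separately: the truncated terminal transform is the untruncated one minus the near part $\int_{|x-y|<\delta}$. Since $f=\chi$ is smooth, this near part tends to zero in $L^{b_0}$ and is $O(\delta)$ in $L^\infty$. So for fixed $\tau$ the terminal estimate holds up to an error vanishing as $\delta\to0$. It is cleaner, though, to estimate the untruncated objects directly once the limit in Step 3 is justified.

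With $b_0>2p'$, each branch lies in $L^{b_0}(Y)$ with norm $C\tau^{-1/2}$: for order zero by \eqref{reverse-left-zero-gain} and \eqref{reverse-right-zero-gain}, and for positive order by the $L^\infty$ bounds, which dominate $L^{b_0}$ on $Y$. Since $2/b_0+1/p<1$ and $Y$ is compact, H\"older's inequality gives
\[
 \Bigl|\frac\tau\pi\int Qe^{-i\tau\psi_{z_0}}\Pi_j\widetilde\Pi_k\Bigr|
 \le C\tau\,\|Q\|_{L^p}\,\|\Pi_j\|_{L^{b_0}(Y)}\,\|\widetilde\Pi_k\|_{L^{b_0}(Y)}
 \le C\|Q\|_{L^p}\|q\|_{L^p}^j\|\widetilde q\|_{L^p}^k ,
\]
uniformly for $z_0\in Z$ and $\tau\ge2$. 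This is \eqref{direct-rough-center-transpose-Linf}.

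\emph{Step 3: passing $\delta\downarrow0$ (main obstacle).} On the right-hand side of \eqref{direct-rough-center-transpose-identity}, the truncated branches converge to the true ones for fixed $z_0,\tau$. Their absolute majorants are iterated $k_R$-convolutions, which lie in $L^{b_0}$ or $L^\infty$ uniformly in $\delta$ by Lemma~\ref{lem-localized-kernel-mapping} and $K_R\in L^{p'}$. Dominated convergence in $x$ therefore gives convergence. On the left-hand side, $E_\tau^t\mathcal G^\delta(z_0)$ is an integral over all variables against a bounded factor. The full absolute mixed kernel $|\mathcal K^{\rm mix}_{j,k}|$ is integrable, with total mass $\|I^{1,1}_{j,k}\|_{L^1}$ by Lemma~\ref{lem-mixed-center-densities}, and it dominates every truncation. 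Dominated convergence thus gives $E_\tau^t\mathcal G^\delta\to E_\tau^t\mathcal G$ pointwise, with the transpose given by the absolutely convergent formula and the factor $\tau/\pi$ harmless for fixed $\tau$.

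The delicate part is that the Fubini exchanges need absolute integrability of the whole untruncated oscillatory integral in $(z_0,\Xi)$ jointly for fixed $\tau$. This holds because $z_0$ ranges over the compact set $Z$ and the kernel is in $L^1$. So the identity holds for almost every $z_0\in Z$, and the uniform bound carries over.
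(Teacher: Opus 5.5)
Your proposal is correct and follows essentially the paper's argument. You transpose, undo the determinant-one change of variables, restore the original mixed phase via \eqref{exact-mixed-phase-split}, factor into the two branches, and combine the terminal gains of Corollary~\ref{cor-reverse-born-branch-gains} with H\"older.

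The differences are cosmetic. The paper applies Fubini directly to the untruncated kernel, for each fixed $z_0$ with majorant $\frac{\tau}{\pi}\int I^{1,1}_{j,k}$, so no joint integrability in $z_0$ is needed; it mentions your $\delta$-truncation only as an equivalent alternative. Your single H\"older estimate, using $L^\infty(Y)\subset L^{b_0}(Y)$ for positive-order branches, merges the paper's four order cases into one.
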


\begin{proof}
Since $\mathcal G_{\tau,j,k}$ has fixed compact support and belongs uniformly to
$L^{(r/2)'}$, it belongs to $L^1$.  Hence $E_\tau^t\mathcal G_{\tau,j,k}$ is defined by
\eqref{Etau-transpose-def}.  Substituting
\eqref{rough-oscillatory-center-kernel-def}, using
\eqref{rough-center-absolute-fiber-identity}, and applying Fubini gives
\begin{align*}
 (E_\tau^t\mathcal G_{\tau,j,k})(z_0)
 &=\frac{\tau}{\pi}
 \int e^{i\tau\{\psi_{z_0}(c_{j,k})+P_{j,k}(\Xi)\}}
 \mathcal K_{j,k}^{\rm mix}(\Xi)
 \dd^2x\dd\Xi_j^L\dd\Xi_k^R.
\end{align*}
The exchange is legitimate for each fixed $\tau$, since its absolute majorant
is
\[
 \frac{\tau}{\pi}\int I_{j,k}^{1,1}(c)\dd^2c<\infty.
\]
By \eqref{exact-mixed-phase-split},
\[
 \psi_{z_0}(c_{j,k})+P_{j,k}(\Xi)
 =-\psi_{z_0}(x)+\Phi_j^L(z_0)+\Phi_k^R(z_0).
\]
The branch-mass bounds in Lemma~\ref{lem-branch-output-bounds} show that
\eqref{exact-left-expanded} and \eqref{exact-right-expanded} with terminal
factor $1$ are absolutely convergent for the original $L^p$ coefficients; equivalently,
one may repeat the truncation argument of
Proposition~\ref{prop-direct-rough-fixed-order}.  The exact factorization
\eqref{exact-mixed-kernel}, followed by Fubini in the two branch variables,
therefore yields
\begin{align*}
 (E_\tau^t\mathcal G_{\tau,j,k})(z_0)
 &=\frac{\tau}{\pi}\int Q(x)e^{-i\tau\psi_{z_0}(x)}
 \left(\int e^{i\tau\Phi_j^L(z_0)}
       \mathcal K_j^L\dd\Xi_j^L\right)\notag\\
 &\hspace{42mm}\times
 \left(\int e^{i\tau\Phi_k^R(z_0)}
       \mathcal K_k^R\dd\Xi_k^R\right)\dd^2x,
\end{align*}
which is \eqref{direct-rough-center-transpose-identity} by
\eqref{exact-left-expanded}, \eqref{exact-right-expanded}, and
\eqref{direct-transpose-branch-def}.

Choose the compact set $Y$ and the exponent $b_0>2p'$ as in
Corollary~\ref{cor-reverse-born-branch-gains}, with
$\operatorname{supp}Q\subset Y$.  The two factors in
\eqref{direct-rough-center-transpose-identity} each contribute a gain
$\tau^{-1/2}$.  We distinguish the possible branch orders.

If $j=k=0$, choose $\gamma\in(1,\infty)$ from
\[
 \frac1p+\frac2{b_0}+\frac1\gamma=1;
\]
the choice $b_0>2p'$ makes $\gamma$ well defined.  H\"older's inequality on $Y$
and \eqref{reverse-left-zero-gain}--\eqref{reverse-right-zero-gain} give
\[
 \int_Y|Q \Pi_0^{z_0,\tau}\widetilde\Pi_0^{z_0,\tau}|
 \leq C_Y\tau^{-1}\|Q\|_{L^p}.
\]
If $j\geq1$ and $k=0$, choose $\gamma$ from
$1/p+1/b_0+1/\gamma=1$ and use
\eqref{reverse-left-positive-gain} and \eqref{reverse-right-zero-gain}:
\[
 \int_Y|Q \Pi_j^{z_0,\tau}\widetilde\Pi_0^{z_0,\tau}|
 \leq C_{j,Y}\tau^{-1}\|Q\|_{L^p}\|q\|_{L^p}^j.
\]
The case $j=0$, $k\geq1$ is symmetric.  Finally, if $j,k\geq1$, then
$\|Q\|_{L^1(Y)}\leq C_Y\|Q\|_{L^p}$ and
\eqref{reverse-left-positive-gain}--\eqref{reverse-right-positive-gain} give
\[
 \int_Y|Q \Pi_j^{z_0,\tau}\widetilde\Pi_k^{z_0,\tau}|
 \leq C_{j,k,Y}\tau^{-1}
 \|Q\|_{L^p}\|q\|_{L^p}^j\|\widetilde q\|_{L^p}^k.
\]
The prefactor $\tau/\pi$ in
\eqref{direct-rough-center-transpose-identity} cancels the gain $\tau^{-1}$
in all four cases.  This proves
\eqref{direct-rough-center-transpose-Linf}.
\end{proof}

\subsection{Duality for the product error}\label{subsec-product-duality}

The uniform transpose estimate, together with the positive majorant for $\mathcal G_{\tau,j,k}$, allows approximation of the center input in $L^{r/2}$. Only the center input $F$ is approximated. The coefficients defining $\mathcal G_{\tau,j,k}$ are unchanged.

\begin{prop}
\label{prop-direct-product-error-duality}
Let $1<p<2$, fix $j,k\geq0$.  For every compactly supported $F\in L^{r/2}(\C)$,
\begin{equation}\label{direct-product-error-duality-conclusion}
 \int_{\C}(E_\tau F-F)(c)\mathcal G_{\tau,j,k}(c)\dd^2c
 \longrightarrow0
 \qquad (\tau\to\infty).
\end{equation}
The functionals on the left are uniformly bounded on every fixed
compactly supported subspace of $L^{r/2}$.
\end{prop}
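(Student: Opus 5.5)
The plan is a density argument based on two uniform bounds: one for the pairing with $F$, one for the pairing with $E_\tau F$. Fix a compact set $K$ containing the support of $F$, and let $Z\Subset\C$ contain both $K$ and the fixed support $Z_{j,k}$ of $\mathcal G_{\tau,j,k}$ from Lemma~\ref{lem-rough-oscillatory-center-kernel}. For $F\in L^{r/2}$ supported in $K$, split
\[
 \int (E_\tau F-F)\mathcal G_{\tau,j,k}
 =\int F\,(E_\tau^t\mathcal G_{\tau,j,k})
 -\int F\,\mathcal G_{\tau,j,k}.
\]
The first equality is Fubini applied to the integral formula \eqref{Etau-def}. It is legitimate for fixed $\tau$ because $F\in L^1$ and $\mathcal G_{\tau,j,k}\in L^1$ with compact support, so the double integral has absolute majorant $(\tau/\pi)\|F\|_{L^1}\|\mathcal G_{\tau,j,k}\|_{L^1}$.

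\textbf{Uniform bound.} For the second term, H\"older's inequality and \eqref{rough-center-kernel-Lmprime} give $|\int F\mathcal G_{\tau,j,k}|\le C\|F\|_{L^{r/2}}$, uniformly in $\tau$. For the first term, Lemma~\ref{lem-direct-rough-center-transpose} gives $\|E_\tau^t\mathcal G_{\tau,j,k}\|_{L^\infty(Z)}\le C$ uniformly in $\tau\ge2$. Hence
\[
 \Bigl|\int F\,E_\tau^t\mathcal G_{\tau,j,k}\Bigr|\le C\|F\|_{L^1(K)}\le C_K\|F\|_{L^{r/2}}.
\]
Together these show that the functionals $F\mapsto\int(E_\tau F-F)\mathcal G_{\tau,j,k}$ are uniformly bounded on $L^{r/2}$ functions supported in $K$, which is the second assertion of the proposition.

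\textbf{Convergence on a dense class.} Take $F_\varepsilon\in C_0^\infty$ supported in a fixed neighbourhood of $K$ (absorbed into $Z$), with $\|F-F_\varepsilon\|_{L^{r/2}}<\varepsilon$. Since $F_\varepsilon$ is smooth and compactly supported, Lemma~\ref{lem-Etau} gives $\|E_\tau F_\varepsilon-F_\varepsilon\|_{L^\infty}\to0$. We also have $\|\mathcal G_{\tau,j,k}\|_{L^1}\le\|I_{j,k}^{1,1}\|_{L^1}$, independent of $\tau$, by \eqref{rough-center-kernel-pointwise-bound}. Therefore
\[
 \Bigl|\int(E_\tau F_\varepsilon-F_\varepsilon)\mathcal G_{\tau,j,k}\Bigr|\le \|E_\tau F_\varepsilon-F_\varepsilon\|_{L^\infty}\|I_{j,k}^{1,1}\|_{L^1}\to0.
\]

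\textbf{Conclusion.} By linearity and the uniform bound applied to $F-F_\varepsilon$,
\[
 \limsup_{\tau\to\infty}\Bigl|\int(E_\tau F-F)\mathcal G_{\tau,j,k}\Bigr|\le C\varepsilon .
\]
Letting $\varepsilon\to0$ gives \eqref{direct-product-error-duality-conclusion}.

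The substantive input is the $\tau$-uniform $L^\infty$ bound for $E_\tau^t\mathcal G_{\tau,j,k}$, which is already provided by Lemma~\ref{lem-direct-rough-center-transpose}. The only care needed is bookkeeping: the transpose bound holds only on the compact set $Z$, so $Z$ must contain the supports of $F$ and of all its approximants. I expect no further obstacle.
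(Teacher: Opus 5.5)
Your proposal is correct and follows the paper's proof. You make the same transpose split and the same two uniform bounds: the $L^\infty(Z)$ bound on $E_\tau^t\mathcal G_{\tau,j,k}$ from Lemma~\ref{lem-direct-rough-center-transpose} and H\"older with \eqref{rough-center-kernel-Lmprime}. You then use the same density argument with smooth approximants supported in a common compact set. Your only small variation is bounding $\|\mathcal G_{\tau,j,k}\|_{L^1}$ by $\|I_{j,k}^{1,1}\|_{L^1}$ via \eqref{rough-center-kernel-pointwise-bound} instead of by H\"older on the fixed support, and either is fine.
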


\begin{proof}
Choose one compact set $Z$ containing $\operatorname{supp}F$ and, later,
the supports of its smooth approximants.  Since $F\in L^{r/2}_c\subset L^1$,
Fubini and the definition of the bilinear transpose
give
\begin{equation}\label{direct-product-error-transpose-pairing}
 \int (E_\tau F)(c)\mathcal G_{\tau,j,k}(c)\dd^2c
 =\int F(z_0)(E_\tau^t\mathcal G_{\tau,j,k})(z_0)\dd^2z_0.
\end{equation}
Using Lemma~\ref{lem-direct-rough-center-transpose},
Lemma~\ref{lem-rough-oscillatory-center-kernel}, and the finite measure of
$Z$, we obtain
\begin{align}
 \left|\int(E_\tau F-F)\mathcal G_{\tau,j,k}\right|
 &\leq \|F\|_{L^1(Z)}
       \|E_\tau^t\mathcal G_{\tau,j,k}\|_{L^\infty(Z)}
       +\|F\|_{L^{r/2}}\|\mathcal G_{\tau,j,k}\|_{L^{(r/2)'}}\notag\\
 &\leq C_{j,k,Z}
 \|Q\|_{L^p}\|q\|_{L^p}^j\|\widetilde q\|_{L^p}^k
 \|F\|_{L^{r/2}}.
 \label{direct-product-error-functional-bound}
\end{align}
If $F\in C_0^\infty(\C)$, Lemma~\ref{lem-Etau} gives
$\|E_\tau F-F\|_{L^\infty}\to0$.  The compact support of
$\mathcal G_{\tau,j,k}$ and \eqref{rough-center-kernel-Lmprime} imply the uniform bound
\[
 \|\mathcal G_{\tau,j,k}\|_{L^1}
 \leq |Z_{j,k}|^{2/r}\|\mathcal G_{\tau,j,k}\|_{L^{(r/2)'}}\leq C_{j,k},
\]
so \eqref{direct-product-error-duality-conclusion} follows for smooth $F$.
For general $F\in L^{r/2}_c$, choose $F_n\in C_0^\infty$ with common compact support and $F_n\to F$ in $L^{r/2}$. Apply \eqref{direct-product-error-functional-bound} to $F-F_n$ and the smooth result to $F_n$. First let $\tau\to\infty$ with $n$ fixed, and then let $n\to\infty$, using the uniform bound for $F-F_n$. This proves \eqref{direct-product-error-duality-conclusion}.
\end{proof}

Taking $F=\varphi A\widetilde A$ completes the proof of Proposition~\ref{prop-fixed-mixed-cancellations}.

\section{Completion of the proof}\label{sec:final-proof}\label{subsec-completion-remainder}

\begin{prop}\label{prop-Lp-remainders}
Let $1<p<2$.  For every $\varphi\in C_0^\infty(\Omega)$,
\begin{equation}\label{Lp-remainder-estimate}
\frac{\tau}{\pi}\int\varphi(z_0)
\left[
\int Qe^{i\tau\psi_{z_0}}
\bigl((1+\zeta_\tau)(1+\widetilde\zeta_\tau)-1\bigr)\dd^2z
\right]\dd^2z_0=o(1).
\end{equation}
Equivalently, $\mathcal R_\tau=o(1)$.
\end{prop}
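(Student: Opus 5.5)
The proof will assemble the pieces already established; no new estimate is needed. The first step is to identify the left side of \eqref{Lp-remainder-estimate} with $\mathcal R_\tau$. Subtracting the constant term in the pointwise identity \eqref{pointwise-born-decomposition} gives
\[
e^{i\tau\psi_{z_0}}\bigl((1+\zeta_\tau)(1+\widetilde\zeta_\tau)-1\bigr)
=W_{z_0,\tau}+\widetilde W_{z_0,\tau}
+e^{-i\tau\psi_{z_0}}W_{z_0,\tau}\widetilde W_{z_0,\tau}.
\]
We then multiply by $\tau\varphi(z_0)Q(z)/\pi$, integrate in $(z,z_0)$, and expand $W_{z_0,\tau}$ and $\widetilde W_{z_0,\tau}$ into their Neumann series \eqref{remainder-neumann-expansion}. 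Lemma~\ref{lem-center-measurability} supplies jointly measurable representatives, the pointwise majorants \eqref{Uj-pointwise-majorant}--\eqref{tilde-W-pointwise-majorant}, and the integrability \eqref{center-weight-integrability} of the center weights. Consequently Tonelli and Fubini allow termwise integration, and the left side equals $\mathcal R_\tau$ as defined in \eqref{remainder-functional-as-born-sums}. We fix $\tau$ large enough that $\rho_\tau,\widetilde\rho_\tau\le 1/2$.

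The second step splits $\mathcal R_\tau$ as in \eqref{finite-tail-born-split}. With $N_1,N_2$ chosen by \eqref{L-M-choice}, Proposition~\ref{prop-neumann-tails} shows that the one-sided tails $j\ge N_1$, $k\ge N_1$ and the mixed tail $j+k\ge N_2$ contribute $o(1)$. The bounds \eqref{left-linear-tail-decay}, \eqref{right-linear-tail-decay} and \eqref{mixed-tail-decay} control the absolute sums, so the tails of the series of averaged functionals are $o(1)$ as well.

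It remains to handle the finitely many remaining terms: $\mathcal B_{j,\tau}$ and $\widetilde{\mathcal B}_{k,\tau}$ for $0\le j,k<N_1$, and $\mathcal M_{j,k,\tau}$ for $j+k<N_2$. Proposition~\ref{prop-fixed-one-sided-cancellations} shows that each one-sided term is $o(1)$. Proposition~\ref{prop-fixed-mixed-cancellations} shows that each mixed term is $o(1)$; its proof relies on Proposition~\ref{prop-direct-product-error-duality} with $F=\varphi A\widetilde A$. The number of such terms depends only on $p$, $\theta_0$ and the potentials, not on $\tau$, so their sum is $o(1)$. Hence $\mathcal R_\tau=o(1)$.

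The only point needing care is the bookkeeping in the first step. The functionals $\mathcal B_{j,\tau}$ and $\mathcal M_{j,k,\tau}$ used in Sections~\ref{sec:positive-densities}--\ref{sec:fixed-cancellations} are defined through the fixed-order kernel representations. We must check that these coincide with the functionals \eqref{left-born-functional-def}--\eqref{mixed-born-functional-def}, which are built from the $L^\infty$ Neumann iterates. This identification is exactly Proposition~\ref{prop-direct-rough-fixed-order}: it gives the branch expansions \eqref{branch-identities-Uj} almost everywhere for the original $L^p$ coefficients, together with the absolute convergence from Lemma~\ref{lem-fixed-order-absolute-convergence}. Once the two definitions are matched, the proposition follows.
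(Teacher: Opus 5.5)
Your proposal is correct and follows the paper's proof: split $\mathcal R_\tau$ via \eqref{finite-tail-born-split}, discard the tails by Proposition~\ref{prop-neumann-tails}, and kill the finitely many remaining terms with Propositions~\ref{prop-fixed-one-sided-cancellations} and~\ref{prop-fixed-mixed-cancellations}. Your extra bookkeeping is also sound. It identifies the left side with $\mathcal R_\tau$ through \eqref{pointwise-born-decomposition} and Lemma~\ref{lem-center-measurability}, and matches the functionals through Proposition~\ref{prop-direct-rough-fixed-order}; the paper does both earlier rather than inside this proof.
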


\begin{proof}
Equation~\eqref{finite-tail-born-split} expresses $\mathcal R_\tau$ as three finite sums, with $j<N_1$, $k<N_1$, and $j+k<N_2$, plus an $o(1)$ contribution from the complementary tails.  The first two sums are $o(1)$ by Proposition~\ref{prop-fixed-one-sided-cancellations}, and the mixed sum is $o(1)$ by Proposition~\ref{prop-fixed-mixed-cancellations}.
\end{proof}

\begin{proof}[Proof of Theorem \ref{main-thm}]
Assume first that $1<p<2$ and put $Q=V-\widetilde V$.  Apply Lemma~\ref{lem-alessandrini} to the CGO solutions for $V$ and $\widetilde V$.  Equality of the weak Dirichlet-to-Neumann maps gives \eqref{alessandrini} for almost every $z_0$.  Lemma~\ref{lem-center-measurability} allows us to multiply this identity by $\tau\varphi(z_0)/\pi$, integrate over $z_0$, and expand the two Neumann series term by term.  Hence, for every $\varphi\in C_0^\infty(\Omega)$,
\begin{equation}\label{final-averaged-identity}
 0=\mathcal L_\tau+\mathcal R_\tau,
\end{equation}
where $\mathcal L_\tau$ is the leading functional in
\eqref{leading-functional-def} and $\mathcal R_\tau$ is the sum of all
one-sided and mixed Born terms in
\eqref{remainder-functional-as-born-sums}.

By \eqref{leading-limit},
\begin{equation}\label{final-leading-limit}
 \mathcal L_\tau=\int_\Omega Q\varphi\,\dd^2z+o(1).
\end{equation}
For the remainder, Proposition~\ref{prop-Lp-remainders} gives
\begin{equation}\label{final-remainder-limit}
 \mathcal R_\tau=o(1).
\end{equation}
Letting $\tau\to\infty$ in \eqref{final-averaged-identity} and using
\eqref{final-leading-limit}--\eqref{final-remainder-limit}, we obtain
\begin{equation*}
 \int_\Omega Q(z)\varphi(z)\,\dd^2z=0
 \qquad\text{for every }\varphi\in C_0^\infty(\Omega).
\end{equation*}
Thus $Q=0$ in $\mathcal D'(\Omega)$ and therefore almost everywhere in $\Omega$.

If $p\geq2$, choose any $p_0\in(1,2)$.  Since $\Omega$ is bounded, $V,\widetilde V\in L^{p_0}(\Omega)$, and the result just proved applies without changing the weak equations or the Dirichlet-to-Neumann maps.
\end{proof}

\section*{Acknowledgements}
C.C. was supported by the National Science and Technology Council (NSTC), Taiwan, under grant No.~113-2115-M-A49-018-MY3.

\bibliographystyle{plain}
\bibliography{schrodinger-Lp-arxiv}

\end{document}